\documentclass[12pt]{amsart}
\pdfoutput=1

\usepackage[headings]{fullpage}

\usepackage{graphicx}
\usepackage{color}
\usepackage{url}
\usepackage[bf]{caption}
\usepackage[all]{xy}
\usepackage{type1cm}
\usepackage{enumerate}
\usepackage[hidelinks]{hyperref}

\newtheorem{definition}{Definition}[section]
\newtheorem{theorem}[definition]{Theorem}

\newtheorem{lemma}[definition]{Lemma}

\newtheorem{cor}[definition]{Corollary}
\theoremstyle{definition}

\theoremstyle{definition}
\newtheorem{remark}[definition]{Remark}

\def\C{\mathbb{C}}
\def\Q{\mathbb{Q}}
\def\H{\mathbb{H}}

\def\Z{\mathbb{Z}}
\def\R{\mathbb{R}}
\def\calO{\mathcal{O}}

\def\SL{\mathrm{SL}}
\def\PSL{\mathrm{PSL}}

\def\PGL{\mathrm{PGL}}

\def\pqr{\{p,q,r\}}

\newcommand\groupMarker[1]{\mathrm{#1}}
\newcommand\mfdMarker[1]{\mathrm{\mathcal{#1}}}

\newcommand\tetGroup[1]{\groupMarker{\Gamma}^{#1}}

\newcommand\univPrinCong[2]{\groupMarker{U}^{#1}_{#2}}
\newcommand\MunivPrinCong[2]{\mfdMarker{U}^{#1}_{#2}}
\newcommand\pslPrinCongBig[2]{\groupMarker{W}^{#1}_{#2}}
\newcommand\MpslPrinCongBig[2]{\mfdMarker{W}^{#1}_{#2}}
\newcommand\pslPrinCong[2]{\groupMarker{X}^{#1}_{#2}}
\newcommand\MpslPrinCong[2]{\mfdMarker{X}^{#1}_{#2}}
\newcommand\pglPrinCong[2]{\groupMarker{Y}^{#1}_{#2}}
\newcommand\MpglPrinCong[2]{\mfdMarker{Y}^{#1}_{#2}}
\newcommand\exceptionalObject[2]{\groupMarker{Z}} 
\newcommand\MexceptionalObject[2]{\mfdMarker{Z}} 
\newcommand\mfd{\groupMarker{M}}
\newcommand\Mmfd{\mfdMarker{M}}

\newcommand\barMunivPrinCong[2]{\bar{\mfdMarker{U}}^{#1}_{#2}}

\newcommand\myComment[1]{}

\setcounter{secnumdepth}{4}
\setcounter{tocdepth}{1}

\begin{document}

\title{Regular Tessellation Link Complements}
\author{Matthias G\"orner}

\begin{abstract}
By a regular tessellation, we mean any hyperbolic 3-manifold tessellated by ideal Platonic solids such that the symmetry group acts transitively on oriented flags. A regular tessellation has an invariant we call the cusp modulus. For small cusp modulus, we classify all regular tessellations. For large cusp modulus, we prove that a regular tessellations has to be infinite volume if its fundamental group is generated by peripheral curves only. This shows that there are at least 19 and at most 21 link complements that are regular tessellations (computer experiments suggest that at least one of the two remaining cases likely fails to be a link complement, but so far, we have no proof). In particular, we complete the classification of all principal congruence link complements given in Baker and Reid for the cases of discriminant $D=-3$ and $D=-4$. We only describe the manifolds arising as complements of links here, with a future publication ``Regular Tessellation Links'' giving explicit pictures of these links.
\end{abstract}

\maketitle

\tableofcontents

\section{Introduction}

According to \cite{AitchisonRubstein:CombCubings}, tessellations by Platonic solids ``have played a significant role in the exploration and exposition of 3-dimensional geometries and topology,'' and we refer the reader to Aitchison and Rubinstein's excellent introduction for examples. Existing literature has investigated the tessellations by ideal (and thus hyperbolic) Platonic solids that can occur as knot and link complements. Following \cite{coxeter:honeycombsHyp,coxeter:RegPolytopes}, we know that there is an ideal tessellation of hyperbolic 3-space $\H^3$ by the regular ideal tetrahedron, octahedron, cube, and dodecahedron, but not the icosahedron. It is known that each of the first four Platonic solids also tessellates some knot or link complement. An example for the tetrahedron is the figure-eight knot, and for the octahedron the Borromean rings \cite{thurstonLevy:ThreeDimGeomTop}. Aitchison and Rubinstein completed this work by constructing cubical links and dodecahedral knots \cite{AitchisonRubstein:CombCubings}. Note that for some solids, the literature gives knots, but for others, only links. In fact, the figure-eight knot and the two dodecahedral knots are the only three knots whose complements are tessellated by an ideal Platonic solid. This follows from Reid's result that the figure-eight knot is the unique arithmetic knot \cite{reid:arithmeticity} and Hoffman's analysis of the nonarithmetic dodecahedral case \cite{hoffmann:dodecahedral}.

However, there are infinitely many link complements tessellated by ideal Platonic solids. To see this, pick some link that has an unknotted component and whose complement is tessellated by an ideal Platonic solid, for example, the Borromean rings. Then, construct an arbitrarily large cyclic cover of the link branched over the chosen unknotted component. If we require as natural symmetry condition that the link complement be a regular ideal tessellation, then the number becomes finite again, and it is a natural question and the goal of this paper to classify them. Note that all manifolds considered here are orientable but that we allow chiral regular tessellations of a manifold, so a tessellation is defined to be regular if each flag consisting of a solid, an adjacent face and an edge adjacent to the face can be taken to any other flag through an isometry. When we say ``regular tessellation'' we mean any hyperbolic manifold together with a tessellation fulfilling this condition. This generalizes the traditional use of that term that assumed that the underlying manifold is $\mathbb{S}^3$, $\mathbb{E}^3$, or $\H^3$. For the tetrahedron, a tessellation is a regular if and only if the underlying hyperbolic manifold is ``maximally symmetric'' in the sense that no other manifold has more orientation-preserving symmetries per volume (this follows from \cite{Meyerhoff:MinVolCuspedOrbifold} where the minimum volume orbifold was determined).

\subsection{Examples of Known Regular Tessellations}

Ideal regular tessellations by Platonic solids arise in various contexts. If we allow orbifolds for a moment, there is a particularly easy example: the boundary of a 4-simplex is a regular tessellation by 5 regular ideal hyperbolic tetrahedra, as shown on the left in Figure~\ref{fig:5chainLink}. To obtain a regular tessellation of a cusped hyperbolic manifold, take the unique manifold double-cover of the orbifold. The result is the complement of the minimally twisted 5-component chain link L10n113, which conjecturally has the smallest volume among the 5-cusped orientable hyperbolic manifolds. Its Dehn fillings have been extensively studied: the exceptional ones were recently classified in \cite{MPR:fivechainfilling}, and the remaining Dehn fillings yield almost every manifold of the cusped Callahan-Hildebrand-Weeks census \cite{CallahanHildebrandWeek:cuspedCensus} and thus also of the closed Hodgson-Weeks census \cite{HodgsonWeeks:closedCensus}, a remarkable property also featured in the experiments of \cite{dunfield:virthaken} on the virtual Haken conjecture.

\begin{figure}[h]
\begin{center}
\scalebox{0.88}{\input{figures_gen/5chainLink.tex}}
\caption{\label{fig:5chainLink}The orbifold on the left (``pentangle'') is tessellated by 5 regular ideal hyperbolic tetrahedra, 3 meeting at an edge (edges show order-2 singular locus, dots orbifold cusps, and note that the 5 tetrahedra are the faces of the boundary of a 4-simplex that become ideal when removing the 0-skeleton). The orbifold is double-covered by a manifold that is the complement of the minimally twisted 5-component chain link and a regular tessellation by 10 ideal hyperbolic tetrahedra \cite{dunfield:virthaken}. In the notation introduced later, it is $\MunivPrinCong{\{3,3,6\}}{2}$.}
\end{center}
\end{figure}

Another example of a regular tessellation with an easy combinatorial description comes from chessboard complexes defined in \cite{ziegler:shellChess, BjornerLovaszVrecicaZivaljevic:Chessboard}:
\begin{definition}
The $m\times n$ {\it chessboard complex} is a simplicial complex consisting of a $k$-simplex for every non-taking configuration of $k$ rooks (``that is, no two rooks on the same row or column'') on the $m\times n$ chessboard. A face of a $k$-simplex is identified with the $k-1$-simplex corresponding to the respective subset of $k-1$ rooks.
\end{definition}
The $4\times 5$ chessboard complex happens to yield a regular tessellation by 120 hyperbolic ideal tetrahedra \cite{eppstein:mathoverflow} (namely $\MunivPrinCong{\{3,3,6\}}{2+2\zeta}$). To see this, look at the $2\times 3$ and the $3\times 4$ chessboard complex: The $2\times 3$ chessboard complex is a 6-cycle. The 6-cycle is the link of a vertex in the $3\times 4$ chessboard complex that is a torus with 24 triangles. This torus is a link of a vertex in the $4\times 5$ chessboard complex. Hence, there are 6 tetrahedra meeting at an edge and removing the vertices yields a 3-manifold with toroidal cusps.

The last example we give is the Thurston congruence link, an 8-component link whose complement admits a regular tessellation by 28 hyperbolic ideal tetrahedra with a fascinating connection to the Klein quartic, the complex projective curve defined by $x^3y+y^3z+z^3x=0$. In \cite{thurston:HowToSee}, Thurston noticed that the cusp neighborhoods in the complement of the minimally twisted 7-component chain link appear to be close to a regular pattern and that drilling a geodesic ``crystallizes'' the hyperbolic manifold so that it admits a regular tessellation by 28 ideal tetrahedra (namely $\MunivPrinCong{\{3,3,6\}}{2+\zeta}$). He gave a picture of the resulting 8-component link and noticed a fascinating connection with the Klein quartic: they share the same orientation-preserving symmetry group $\PSL(2,7)$, the unique simple group of order 168. In fact, the 2-skeleton of the above regular tessellation forms an immersed punctured Klein quartic \cite{agol:CongruenceLink}.

\subsection{Arithmetic and Congruence Links}

The Thurston congruence link is also an example of a congruence link and thus illustrates the connection of the classification result here to arithmetic and congruence links. Motivated by Thurston's question 19 in \cite{thurston:bulletinKleinianGroups} noting the ``special beauty'' of these manifolds, Reid and Baker have been classifying small congruence links, e.g., \cite{bakerReid:prinCong}. For a subclass of congruence links, namely the principal congruence links for discriminant $D=-3$ and $D=-4$, the main theorem here allows a complete classification. Let $z\in\calO_D$ where $\calO_D$ is the ring of integers in the imaginary quadratic number field $\Q(\sqrt{D})$. Recall that the principal congruence manifold (denoted here by $\MpslPrinCongBig{D}{z}$, see Section~\ref{sec:FinUnivRegTess}) of level $z$ is the quotient of $\H^3$ by all matrices in $\PSL(2,\calO_D)$ that are congruent to the identity matrix modulo the ideal $\langle z\rangle$. For discriminant $D=-3$ and $D=-4$, a principal congruence manifold admits a regular tessellation by ideal tetrahedra, respectively, octahedra. Thus, we can apply the main theorem to list all $\MpslPrinCongBig{-3}{z}$ and $\MpslPrinCongBig{-4}{z}$ that are link complements in Corollary~\ref{cor:classPrinCong}.

\subsection{Remarks}

It should be remarked that although the Borromean rings and the Whitehead link can be tessellated by octahedra and the alternating 4-component chain link $8^4_1$ by cubes, these tessellations are not regular as the smallest regular tessellation link (smallest in terms of both volume and number of components) has 5 components. In particular, no knot complement is a regular tessellation.

Furthermore, note that there is a subtle difference between classifying regular tessellation link complements and regular tessellation links. Whereas the number of regular tessellation link complements is finite, the number of regular tessellation links is, a priori, not finite as a link is in general not determined by its complement (as pointed out in \cite{bakerReid:prinCong}). In fact, twisting along a disc spanned by a component of the link in Figure~\ref{fig:5chainLink} yields an infinite family of regular tessellation links with the same complement. Thus, we only classify link complements here, not links.

\section{Main Theorem and Overview}

Consider the regular ideal tessellations $\pqr$ of $\H^3$ by ideal Platonic solids (see~\cite{AitchisonRubstein:CombCubings}): $\{3, 3, 6\}$ tessellated by tetrahedra, $\{3, 4, 4\}$ by octahedra, $\{4, 3, 6\}$ by cubes, and $\{5, 3, 6\}$ by dodecahedra. We use the upper half space model of $\H^3$ and identify $\partial \H^3$ with $\C\cup\{\infty\}$ so that $\PSL(2,\C)$ and $\PGL(2,\C)$ act by M\"obius transformations and $\PSL(2,\C)\cong \PGL(2,\C)\cong \mathrm{Aut}^+(\H^3)$. For normalization, we move each of the above regular ideal tessellations in $\H^3$ such that there is a face with three consecutive vertices at the points $\infty$, $0$, and $1$. Let $\tetGroup{\pqr}\subset\mathrm{Aut}^+(\H^3)$ be the orientation-preserving isometries of such a regular tessellation of $\H^3$.

\begin{definition}\label{def:algDefRegTess}
A manifold $\Mmfd$ is a regular tessellation of type $\{p,q,r\}$ if it is a quotient $\Mmfd=\H^3/M$ by a torsion-free normal subgroup $M$ of $\tetGroup{\pqr}$.
\end{definition}

 Let $z$ be a number of the form $a+bu$ where $u=e^{2\pi i/r}$ and $a, b\in\Z$. For $r=4$, we write $z=a+bi$ and, for $r=6$, $z=a+b\zeta$ where $\zeta=\frac{1+\sqrt{-3}}{2}$.

\begin{definition} \label{def:univRegTess}
The universal regular tessellation of cusp modulus $z$ is the quotient $$\MunivPrinCong{\pqr}{z}=\frac{\H^3}{\univPrinCong{\pqr}{z}}$$ where $\univPrinCong{\pqr}{z}$ denotes the normal closure of $p_z=\left(\begin{array}{cc}1 & z\\ 0 & 1\end{array}\right)$ in $\tetGroup{\pqr}$, i.e., the smallest normal subgroup of $\tetGroup{\pqr}$ containing $p_z$.
\end{definition}

 The groups $\univPrinCong{\{3,q,r\}}{z}$ were also called ``stabilizer of infinity in the principal congruence subgroup'' in \cite{bakerReid:prinCong}.
The quotient space can actually be an orbifold. It can also be infinite-volume. We explain the regular tessellation structure and the universal property of $\MunivPrinCong{\pqr}{z}$ in detail in Section~\ref{sec:regTessSolids} and only remark here that multiplication of $z$ by $u$ leaves the universal regular tessellation unchanged and complex conjugation only reflects it.
Thus, for classification purposes, we only have to enumerate those $z$ that are in canonical form defined as follows:
\begin{definition}
Let $z=a+bu$. If $a\geq b\geq 0$, we say that $z$ is in {\bf canonical form}.
\end{definition}

We can now state the main theorem:

\begin{theorem}\label{thm:main}
If $\Mmfd$ is a link complement admitting a regular tessellation, then it must be a finite-volume manifold universal regular tessellation. All finite-volume universal regular tessellations $\MunivPrinCong{\pqr}{z}$ are listed in Table~\ref{table:finiteVolUnivReg}, so there are 20 or 21 non-homeomorphic such manifolds. For every manifold case not marked with $*$ in the table, the universal regular tessellation is known to be a link complement. Thus there are 19 to 21 regular tessellation link complements.
\end{theorem}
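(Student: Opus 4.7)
The plan is to split the proof into three stages. In the first stage I reduce the link-complement case to the universal one: writing $\Mmfd = \H^3/M$ with $M \triangleleft \tetGroup{\pqr}$ torsion-free, and conjugating in $\tetGroup{\pqr}$ so that a chosen cusp sits at $\infty$ with meridian $p_z$ of the form given in Definition~\ref{def:univRegTess}, I combine two inputs. First, because $\Mmfd$ is a link complement in $S^3$, $M \cong \pi_1(\Mmfd)$ is normally generated inside itself by a meridian from each link component. Second, regularity means $\tetGroup{\pqr}/M$ acts flag-transitively on $\Mmfd$, hence cusp-transitively, so all meridians are pairwise $\tetGroup{\pqr}$-conjugate to $p_z$. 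Normality of $M$ in $\tetGroup{\pqr}$ together with $p_z \in M$ immediately gives $\univPrinCong{\pqr}{z} \subseteq M$; conversely each meridian lies in the normal subgroup $\univPrinCong{\pqr}{z}$, which is therefore normal in $M$ and contains a set that normally generates $M$, so $M \subseteq \univPrinCong{\pqr}{z}$. Hence $M = \univPrinCong{\pqr}{z}$, and finite volume is automatic for link complements in $S^3$.

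The hard part will be the second stage — classifying all finite-volume $\MunivPrinCong{\pqr}{z}$. As foreshadowed in the abstract, this splits by cusp modulus: for small $|z|$ a direct enumeration is feasible, presumably by analysing the quotient $\tetGroup{\pqr}/\univPrinCong{\pqr}{z}$ (the orientation-preserving symmetry group of the tessellated manifold) and checking the finite-volume and manifold conditions case by case to extract the entries of Table~\ref{table:finiteVolUnivReg}. For large $|z|$ one must prove that $\MunivPrinCong{\pqr}{z}$ has infinite volume, which is the main obstacle. By construction $\univPrinCong{\pqr}{z}$ is peripherally generated as the normal closure of the single parabolic element $p_z$, so the natural strategy is to exhibit a non-peripheral class in $\tetGroup{\pqr}$ that cannot be expressed as a word in $\tetGroup{\pqr}$-conjugates of $p_z$ once $|z|$ exceeds a threshold depending on $\pqr$ — equivalently, to show the quotient $\tetGroup{\pqr}/\univPrinCong{\pqr}{z}$ is infinite — and to conclude that the kernel $\univPrinCong{\pqr}{z}$ cannot be a lattice, forcing $\MunivPrinCong{\pqr}{z}$ to be infinite-volume. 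A geometric implementation, in the spirit of a horoball-packing or injectivity-radius estimate, should make this threshold explicit.

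The third stage is a case-by-case verification for the 20 or 21 entries of Table~\ref{table:finiteVolUnivReg}: for each unmarked entry one exhibits an explicit link in $S^3$ whose complement realizes that $\MunivPrinCong{\pqr}{z}$, starting from recognised examples such as the minimally twisted $5$-chain link for $\MunivPrinCong{\{3,3,6\}}{2}$ and the Thurston congruence link for $\MunivPrinCong{\{3,3,6\}}{2+\zeta}$, and extending by drilling and Dehn-surgery constructions tailored to each tessellation. Entries marked $*$ are those for which no link realization is presently known. The 19 successfully realized entries yield the lower bound, and the at most 21 non-homeomorphic entries of the table yield the upper bound, giving the range of 19 to 21 regular tessellation link complements asserted in the theorem.
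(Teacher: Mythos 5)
Your first stage is essentially the paper's Lemma~\ref{lemma:linkImpliesUniv} and is sound in outline, though one claim is off: meridians need not be $\tetGroup{\pqr}$-conjugate to $p_z$ itself (the stabilizer of a cusp only carries $p_z$ to $p_{u^kz}$, while a meridian can be any primitive element of the cusp lattice). What saves the argument is that every peripheral subgroup of $M$ is a conjugate of $P_z$, and $\univPrinCong{\pqr}{z}$, being normal and containing $p_z$, already contains all of $P_z$ and hence all its conjugates; with that correction your two inclusions do give $M=\univPrinCong{\pqr}{z}$.

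The genuine gap is in your second and third stages, which restate what must be proved without supplying the arguments that constitute the bulk of the paper. To show $\MunivPrinCong{\pqr}{z}$ has infinite volume you must show the quotient $\tetGroup{\pqr}/\univPrinCong{\pqr}{z}$ is infinite; exhibiting one non-peripheral class outside $\univPrinCong{\pqr}{z}$ only shows it is non-trivial, and the appeal to ``a horoball-packing or injectivity-radius estimate'' has no substance here --- the paper explicitly does \emph{not} use the $2\pi$-theorem/$6$-theorem circle of ideas (the cusp of the orbifold $\H^3/\tetGroup{\pqr}$ is a Euclidean turnover, so killing $p_z$ is not an orbifold Dehn filling), and instead proves non-termination of an explicit gluing algorithm: the universal regular tessellation is built by attaching nanotubes (dual Ford domains), and a delicate induction on the boundary surface (smallness of $(p-1)$-clusters, vertex-link evolution, and the non-collision Lemma~\ref{lemma:noCollision}) shows the boundary never closes up once $\|z\|\geq 6$ (resp.\ $\|z\|\geq 9$ or $a\geq 6$ for $\{3,4,4\}$). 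None of this, nor any substitute, appears in your plan. Moreover there is a large band of intermediate moduli between ``small'' and ``large'' (e.g.\ $\MunivPrinCong{\{3,3,6\}}{5}$, $\MunivPrinCong{\{4,3,6\}}{3}$ through $\MunivPrinCong{\{4,3,6\}}{5}$, many $\{5,3,6\}$ and $\{3,4,4\}$ cases) that the general theorem does not cover; the paper disposes of these by two further techniques you omit entirely: cuspidal homology of arithmetic congruence manifolds (Lemma~\ref{lemma:rationalHomologyLinkComplementImpliesInfinite} plus SnapPy computations) and Gap certificates that the symmetry group with presentation~\eqref{eqn:groupRep} is infinite (infinite abelianization of a kernel, or the Newman criterion). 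Finally, for the finite cases the paper does not construct explicit links by ad hoc drilling and surgery: it certifies link complements by finding Dehn-filling slopes that kill $\pi_1$ and invoking Perelman, with a separate $\Z/15$-quotient and cuspidal-cover argument for the 600-cusped $\MunivPrinCong{\{5,3,6\}}{2}$; your stage three gives no workable method for manifolds of this size, and it also misses that the two starred/questioned cases ($\MunivPrinCong{\{3,4,4\}}{4+2i}$ and $\MunivPrinCong{\{5,3,6\}}{2+\zeta}$) are why the count is ``19 to 21'' rather than exactly 20.
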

\myComment{
Let $\Mmfd$ be a cusped orientable hyperbolic 3-manifold admitting a regular tessellation. Then, $\Mmfd$ is a link complement if and only if it is one of the 20 finite volume manifold universal regular tessellations:
\begin{itemize}
\item ~\rlap{tetrahedron:} \phantom{dodecahedron:~}  $\MunivPrinCong{\{3,3,6\}}{2}, \MunivPrinCong{\{3,3,6\}}{2+\zeta}, \MunivPrinCong{\{3,3,6\}}{2+2\zeta}, \MunivPrinCong{\{3,3,6\}}{3}, \MunivPrinCong{\{3,3,6\}}{3+\zeta}, \MunivPrinCong{\{3,3,6\}}{3+2\zeta}, \MunivPrinCong{\{3,3,6\}}{4}, \MunivPrinCong{\{3,3,6\}}{4+\zeta}$
\item ~\rlap{octahedron:} \phantom{dodecahedron:~} $\MunivPrinCong{\{3,4,4\}}{2}, \MunivPrinCong{\{3,4,4\}}{2+i}, \MunivPrinCong{\{3,4,4\}}{2+2i}, \MunivPrinCong{\{3,4,4\}}{3}, \MunivPrinCong{\{3,4,4\}}{3+i}, \MunivPrinCong{\{3,4,4\}}{3+2i}, \MunivPrinCong{\{3,4,4\}}{4+i}, \MunivPrinCong{\{3,4,4\}}{4+2i}$ (???)\footnote{Not confirmed yet that it is a link complement.}
\item ~\rlap{cube:} \phantom{dodecahedron:~} $\MunivPrinCong{\{4,3,6\}}{1+\zeta}$, $\MunivPrinCong{\{4,3,6\}}{2}$, $\MunivPrinCong{\{4,3,6\}}{2+\zeta}$
\item ~\rlap{dodecahedron:} \phantom{dodecahedron:~} $\MunivPrinCong{\{5,3,6\}}{2}$. Maybe, but unlikely\footnote{proof missing that this is infinite and thus ruled out}: $\MunivPrinCong{\{5,3,6,\}}{2+\zeta}$
\end{itemize}
A universal regular tessellation not in the above list fails to be a link complement by being an orbifold ($\MunivPrinCong{\{3,3,6\}}{1}, \MunivPrinCong{\{3,3,6\}}{1+\zeta}, \MunivPrinCong{\{3,4,4\}}{1}, \MunivPrinCong{\{3,4,4\}}{1+i}, \MunivPrinCong{\{4,3,6\}}{1}, \MunivPrinCong{\{5,3,6\}}{1}, \MunivPrinCong{\{5,3,6\}}{1+\zeta}$) or having infinite volume. No two of the above 20 manifolds are homeomorphic.
\end{theorem}
}

\begin{table}[h]
\caption{Size of the universal regular tessellation $\MunivPrinCong{\pqr}{z}$ in number of Platonic solids and cusps (also see Figure~\ref{fig:OverviewArithCases}).}\label{table:finiteVolUnivReg}
\begin{tabular}{r|c|c||c|c||c|c||r|c|c}
\multicolumn{3}{c||}{$\{3,3,6\}$} & \multicolumn{2}{c||}{$\{4,3,6\}$} & \multicolumn{2}{c||}{$\{5,3,6\}$} & \multicolumn{3}{c}{$\{3,4,4\}$}\\
\multicolumn{3}{c||}{tetrahedron} & \multicolumn{2}{c||}{cube} & \multicolumn{2}{c||}{dodecahedron} & \multicolumn{3}{c}{octahedron}\\
$z$ & solids & cusps & solids & cusps & solids & cusps & $z$ & solids & cusps\\ \hline \hline
1 & \multicolumn{2}{|c||}{orbifold} &  \multicolumn{2}{|c||}{orbifold} &  \multicolumn{2}{|c||}{orbifold} & 1 &  \multicolumn{2}{|c}{orbifold}\\ \hline
$1+\zeta$ & \multicolumn{2}{|c||}{orbifold} & 6 & 8 & \multicolumn{2}{|c||}{orbifold} & $1+i$ & \multicolumn{2}{|c}{orbifold}\\ \hline
$2$ & 10 & 5 & 16 & 16 & 240 & 600 & $2$ & 4 & 6\\ \hline
$2+\zeta$ & 28 & 8 & 84 & 48 & \multicolumn{2}{|c||}{?} & $2+i$ & 5 & 6 \\ \hline
$2+2\zeta$ & 120 & 20 & \multicolumn{2}{|c||}{} & \multicolumn{2}{|c||}{} & $2+2i$ & 16 & 12\\ \hline
$3$ & 54 & 12 & \multicolumn{2}{|c||}{} & \multicolumn{2}{|c||}{} & $3$ & 30 & 20 \\ \hline
$3+\zeta$ & 182 & 28 & \multicolumn{2}{|c||}{} & \multicolumn{2}{|c||}{} & $3+i$ & 30 & 18 \\ \hline
$3+2\zeta$ & 570 & 60 & \multicolumn{2}{|c||}{} & \multicolumn{2}{|c||}{} & $3+2i$ & 91 & 42 \\ \hline
$4$ & 640 & 80 & \multicolumn{2}{|c||}{} & \multicolumn{2}{|c||}{} & $4+i$ & 204 & 72 \\ \hline
$4+\zeta$ & 672 & 64 & \multicolumn{2}{|c||}{} & \multicolumn{2}{|c||}{} & $4+2i$ & 122880 & $36864^*$ \\ \hline
\end{tabular}
\end{table}

Note that there are two unsettled cases $\MunivPrinCong{\{5,3,6\}}{2+\zeta}$ and $\MunivPrinCong{\{3,4,4\}}{4+2i}$. In the first case, it is unclear whether the universal regular tessellation $\MunivPrinCong{\{5,3,6\}}{2+\zeta}$ is finite-volume. Computer experiments strongly indicate that it is indeed infinite volume and thus not a link complement; see also the discussion in Section~\ref{section:dicussion}. In the second case, we know that the universal regular tessellation $\MunivPrinCong{\{3,4,4\}}{4+2i}$ is finite-volume, but could neither prove nor disprove that it is a link complement.

This leaves the question unsettled whether every finite volume manifold universal regular tessellation is a link complement. Baker and Reid posed a more general formulation of this question in \cite{bakerReid:prinCong}. We will discuss this further in the discussion section.

For some cases, a link with complement $\MunivPrinCong{\pqr}{z}$ could be found easily or has been explicitly constructed: $\MunivPrinCong{\{3,3,6\}}{2}$ is shown in Figure~\ref{fig:5chainLink}, the complement of the Thurston congruence link \cite{thurston:HowToSee} is $\MunivPrinCong{\{3,3,6\}}{2+\zeta}$, links for $\MunivPrinCong{\{3,3,6\}}{2+2\zeta}$ and $\MunivPrinCong{\{3,3,6\}}{3}$ have been constructed in the author's PhD thesis \cite{goerner:thesis}. Finally, the complement of the minimally twisted 6-component chain link turns out to be $\MunivPrinCong{\{3,4,4\}}{2}$.

The rest of the paper is organized as follows: We first develop the theory of regular tessellations in Section~\ref{sec:regTessTorus} and \ref{sec:regTessSolids}. An important invariant is the cusp modulus describing the regular tessellation structure when restricting to a cusp neighborhood. The largest tessellation and only potential link complement among all tessellations of given cusp modulus is the universal regular tessellation. 

Section~\ref{sec:FinUnivRegTess} classifies all regular tessellations for small cusp modulus and all principal congruence link complements for discriminant $D=-3$ and $D=-4$.

Section~\ref{sec:ConstructUnivRegTess} introduces the central algorithm in this paper: the construction of the universal regular tessellation. This has been implemented in python (see \url{http://www.unhyperbolic.org/regTess/}) and Section~\ref{sec:ImplDetails} describes the details of the implementation as well as examples of how to use the software to obtain the results in Table~\ref{table:finiteVolUnivReg} and Section~\ref{sec:FinUnivRegTess}.

The rest of the paper is devoted to proving the main theorem which means proving that the algorithm to construct the universal regular tessellation never terminates for a case not listed in Table~\ref{table:finiteVolUnivReg}. Section~\ref{sec:proofInfLarge} does this for large cusp modulus. Section~\ref{sec:cuspidalCovers} introduces cuspidal homology to construct cuspidal covers. This is used in Section~\ref{sec:infProofSpecial} to prove infinite universal regular tessellations in the remaining cases.

\section{Regular Tessellations of the Torus}\label{sec:regTessTorus}

Let $\calO_D$ be the ring of integers of the imaginary quadratic number field $\Q(\sqrt{D})$ of discriminant $D < 0$. Here, we will focus on the Eisenstein integers $\calO_{-3}=\Z[\zeta]$ with $\zeta=\frac{1+\sqrt{-3}}{2}$ and the Gaussian integers $\calO_{-4}=\Z[i]$. Let $u$ denote the generator $\zeta$, respectively, $i$ of each of these two rings which are principal ideal domains so every ideal is of the form $\langle z\rangle$ for some $z\in \Z[u]$ which is determined by the ideal up to a unit $u^k$.

Draw a line segment between each pair of points in $\Z[u]\subset\C$ that have unit distance. The result is a regular tessellation of type $\{3,6\}$ for $D=-3$, respectively, $\{4,4\}$ for $D=-4$.

\begin{definition}\label{def:regTessTorus}
Given $z\in \Z[u]\setminus 0$, let $T_z$ be the triangulation of the torus obtained as quotient of the above regular tessellation by the action of the elements in the ideal $\langle z \rangle$ by translations. The dual tessellation of the torus is denoted by $T^*_z$.
\end{definition}

\begin{figure}[h]
\begin{center}
\includegraphics[scale=0.3]{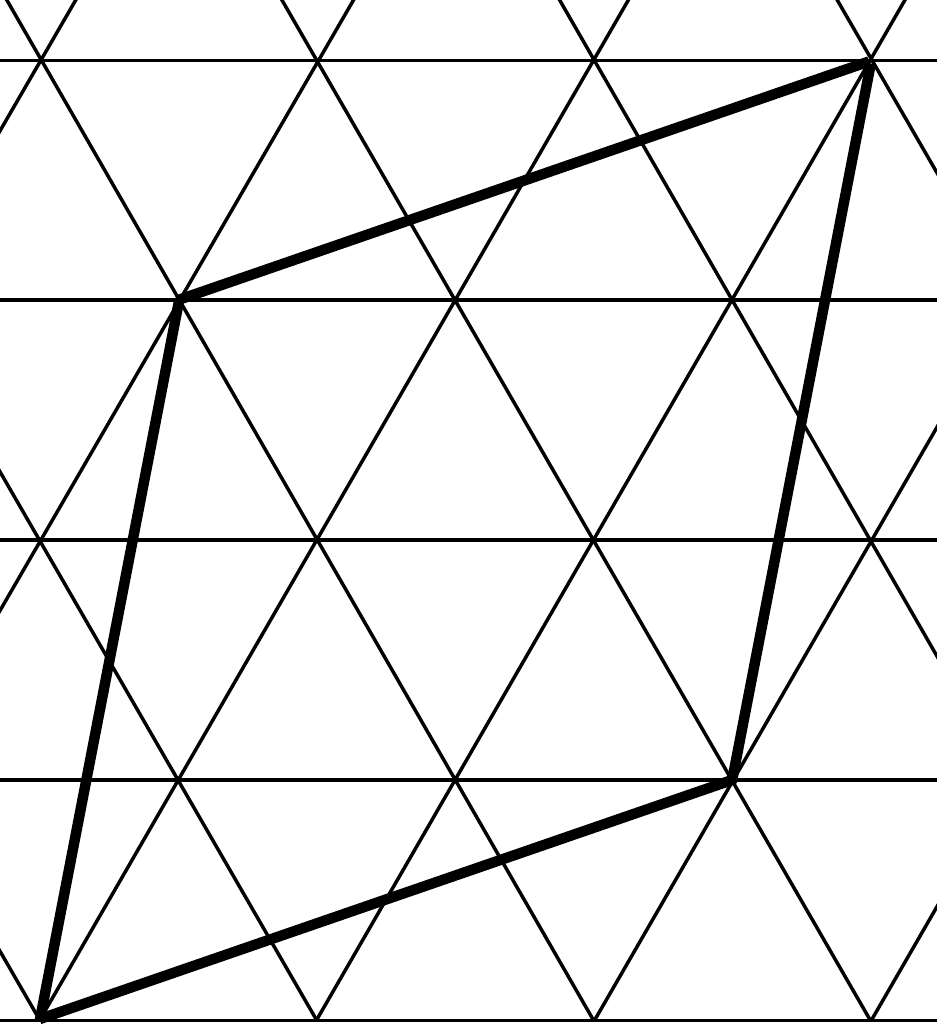}
\end{center}
\caption{Fundamental domain for the chiral regular tessellation $T_{2+\zeta}$ for $D=-3$.}
\label{fig:Torus2pluszeta}
\end{figure}

Recall that every regular tessellation of the torus is of the form $T_z$ or $T^*_z$.  An example is given in Figure~\ref{fig:Torus2pluszeta}. Given an oriented regular tessellation of a torus, the $z$ classifying the tessellation is determined up to multiplication by a unit $u^k$. Note that $T_z$ is the mirror image of $T_{\bar z}$. Hence, given an unoriented regular tessellation of the torus, $z$ is only defined up to multiplication by a unit $u^k$ and complex conjugation. Furthermore, $T_z$ is chiral if and only if $\langle z\rangle \not= \langle \bar z\rangle$.

\section{Cusp Modulus} \label{sec:regTessSolids}

Recall that we normalized the regular tessellations $\{3,3,6\}, \{3,4,4\}, \{4,3,6\}, \{5,3,6\}$ such that there is a face with three consecutive vertices at the points $\infty, 0$, and $1$, see Figure~\ref{fig:hypOct}. Take a horosphere $H$ about $\infty$ that is a high enough plane parallel to $\C$. A solid of an ideal regular hyperbolic tessellations $\pqr$ intersects $H$ in a regular $q$-gon and thus the regular tessellation of $\H^3$ induces a regular tessellation of type $\{q, r\}$ on $H$ with vertices at $\Z[u]$ where $u=e^{2i\pi /r}$. The orientation-preserving isometries of the tessellation on $H$ are given by the upper triangular matrices with coefficients in $\Z[u]$ with upper unit-triangular matrices corresponding to translations. In fact, $\tetGroup{\pqr}$ coincides with the natural $\Z/2$-extension $\PGL(2,\calO_D)$ of a Bianchi group $\PSL(2,\calO_D)$ in two cases: $\tetGroup{\{3,3,6\}}$ is given by $\PGL(2,\calO_{-3})$ and $\tetGroup{\{3,4,4\}}$ by $\PGL(2,\calO_{-4})$. One of the other two cases, $\tetGroup{\{4,3,6\}}$, is also arithmetic because it is commensurable with $\PGL(2,\calO_{-3})$ but does neither cover it nor is covered by it. However, $\tetGroup{\{5,3,6\}}$ is not arithmetic.  As group, each $\tetGroup{\pqr}$ is the orientation-preserving index-2 subgroup of a Coxeter reflection group.

\begin{figure}[h]
\begin{center}
\scalebox{0.7}{\input{figures_gen/hyperbolicOct.tex}}
\end{center}
\caption{An octahedron in the regular tessellation $\{3,4,4\}$. The dashed lines form one of the simplices obtained from a barycentric subdivision. The octahedron intersects the horosphere $H$ in a square so there is an induced regular tessellation $\{4,4\}$ on $H$.   \label{fig:hypOct}}
\end{figure}

Now consider a regular tessellation $\Mmfd$ of a finite-volume oriented cusped hyperbolic manifold and of type $\pqr$. Similarly, there is an induced oriented tessellation $\{q,r\}$ of the boundary of a cusp neighborhood for each cusp, this time on a torus. The tessellation of the 3-manifold being regular also implies that the induced tessellation is regular and the same for each cusp. Thus, the induced oriented regular tessellation is given by some $T_z$.
Thus, we obtain an invariant $z$ of $\Mmfd$ defined up to multiplication by a unit $u^k$ which we call the {\bf cusp modulus}. Note that for the non-arithmetic case $\{5,3,6\}$, the cusp modulus is still an element in $\calO_{-3}$ but that $\calO_{-3}$ is not the invariant trace field. 

\begin{remark}
The regular tessellation structure is also equal to the canonical cell decomposition of the underlying hyperbolic manifold \cite{EpsteinPenner:CanonicalDecomp}. In particular, a hyperbolic 3-manifold admits at most one regular tessellation structure and the cusp modulus is an invariant of the underlying hyperbolic 3-manifold admitting a regular tessellation. Note that this is in general false for non-regular tessellations by Platonic solids. For example, the manifold underlying the regular tessellation $\MunivPrinCong{\{3,3,6\}}{2}$ of ten tetrahedra also admits several different tessellations by two regular cubes but none of the cubical tessellations is regular and thus equal to the canonical cell decomposition ($\MunivPrinCong{\{3,3,6\}}{2}$ is a double cover of the orbifold in Figure~\ref{fig:5chainLink}. This orbifold was named $Q_4$ in \cite{neumannReid:SmallVolOrbs} and shown to decompose into a cube).
\end{remark}

We can also characterize tessellations algebraically. A manifold $\Mmfd=\H^3/\mfd$ has a tessellation induced from the tessellation $\pqr$ of $\H^3$ if $\mfd$ is a torsion-free subgroup of $\tetGroup{\pqr}$. The induced tessellation on $\Mmfd$ is a regular tessellation if furthermore $\mfd$ is normal in $\tetGroup{\pqr}$ (this is how it was defined earlier in Definiton~\ref{def:algDefRegTess}). To see this, recall that a tessellation is regular if its orientation-preserving symmetry group acts transitively on all flags. $\tetGroup{\pqr}$ acts freely and transitively on all flags of the tessellation of $\H^3$. Thus a manifold quotient $\H^3/\mfd$ is a regular tessellation if every symmetry of $\tetGroup{\pqr}$ descends to a  symmetry of $\H^3/\mfd$ which is equivalent to $\mfd$ being normal in $\tetGroup{\pqr}$.

Let $$p_z=\left(\begin{array}{cc}1 & z\\ 0 & 1\end{array}\right),\qquad P=\left\{p_w : w \in \C \right\}\quad\mbox{and}\quad P_z=\left\{p_w : w\in\langle z\rangle\right\}.$$

\begin{definition}
Let $\Mmfd$ be a cusped orientable 3-manifold such that $\Mmfd=\H^3/\mfd$ where $\mfd$ is a torsion-free normal subgroup of $\tetGroup{\pqr}$. Then $z$ is the cusp modulus of $\Mmfd$ if $\mfd\cap P=P_z$.
\end{definition}

Note that in this definition, the cusp modulus is defined up to a multiplication by a unit $u^k$ again. To see this, look at the group $\mfd\cap P$. If $p_w$ and $p_{w'}\in \mfd\cap P$, then $p_{w+w'}=p_w p_{w'}\in\mfd\cap P$ and $p_{uw}=g_R p_w g_R^{-1} \in\mfd\cap P$ where $g_R=\left(\begin{array}{cc}u & 0\\ 0 & 1\end{array}\right)\in\tetGroup{\pqr}$ because $M$ is normal. Thus, the off-diagonal entries of the matrices in $\mfd\cap P$ form an ideal and the ideal is generated by an element $z$ determined up to $u^k$.

This definition of cusp modulus also matches the earlier geometric definition of cusp modulus: the group $\mfd\cap P$ acts on the regular tessellation $\{q,r\}$ on the horosphere $H$ by translations and the resulting quotient is the tessellated torus $T_z$. 

Recall the universal regular tessellation $\MunivPrinCong{\pqr}{z}$ from Definition~\ref{def:univRegTess}. If it is a manifold, it is a regular tessellation by definition and it always has the following universal property:



\begin{lemma}
Let $\Mmfd$ be a (not necessarily finite volume) regular tessellation $\pqr$ with cusp modulus $z$. Then there is a covering map $\MunivPrinCong{\pqr}{z}\to\Mmfd$.
\end{lemma}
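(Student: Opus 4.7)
The plan is to observe that this is essentially a direct consequence of the universal property of the normal closure. Unpacking the hypotheses: $\Mmfd=\H^3/\mfd$ where $\mfd$ is a torsion-free normal subgroup of $\tetGroup{\pqr}$, and having cusp modulus $z$ means $\mfd\cap P=P_z$. In particular, $p_z\in P_z\subseteq \mfd$.

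First I would note that $\mfd$ is a normal subgroup of $\tetGroup{\pqr}$ containing $p_z$. By the definition of $\univPrinCong{\pqr}{z}$ as the \emph{smallest} such normal subgroup, it follows immediately that
\[
\univPrinCong{\pqr}{z}\subseteq \mfd.
\]

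Next I would check the manifold issue: a priori $\MunivPrinCong{\pqr}{z}$ could be an orbifold rather than a manifold, in which case the statement needs to be read in the orbifold category. But under the hypothesis that $\Mmfd$ is a regular tessellation (hence a manifold), $\mfd$ is torsion-free, and since subgroups of torsion-free groups are torsion-free, the containment $\univPrinCong{\pqr}{z}\subseteq \mfd$ forces $\univPrinCong{\pqr}{z}$ to be torsion-free as well. Thus $\MunivPrinCong{\pqr}{z}$ is automatically a manifold in this setting.

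Finally, the inclusion of torsion-free groups acting freely on $\H^3$ gives the desired covering map
\[
\MunivPrinCong{\pqr}{z}=\H^3/\univPrinCong{\pqr}{z}\;\longrightarrow\;\H^3/\mfd=\Mmfd
\]
by passing to quotients. There is no real obstacle here; the only subtle point worth flagging is verifying that the torsion-freeness of $\mfd$ propagates to $\univPrinCong{\pqr}{z}$, which makes the lemma a clean statement about manifolds rather than merely orbifolds.
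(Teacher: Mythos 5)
Your proposal is correct and follows the same argument as the paper: since $\mfd$ is normal in $\tetGroup{\pqr}$ and contains $p_z$ (as $\mfd\cap P=P_z$), it contains the normal closure $\univPrinCong{\pqr}{z}$, which gives the covering map. The extra remark that torsion-freeness of $\mfd$ forces $\univPrinCong{\pqr}{z}$ to be torsion-free (so the cover is a genuine manifold) is a correct and harmless addition beyond what the paper states.
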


\begin{proof}
The group $\mfd$ contains $p_z$ and is normal in $\tetGroup{\pqr}$. Thus it must contain $\univPrinCong{\pqr}{z}$ by definition.
\end{proof}

\begin{lemma} \label{lemma:linkImpliesUniv}
If a regular tessellation $\Mmfd$ is a link complement, then it is also a universal regular tessellation $\MunivPrinCong{\pqr}{z}$.
\end{lemma}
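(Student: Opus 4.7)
The plan is to show the group-theoretic equality $\mfd = \univPrinCong{\pqr}{z}$ inside $\tetGroup{\pqr}$, whereupon the corresponding quotients of $\H^3$ coincide. One inclusion is already delivered by the previous lemma: since $\Mmfd$ has cusp modulus $z$, we have $p_z \in \mfd \cap P = P_z \subseteq \mfd$, and normality of $\mfd$ in $\tetGroup{\pqr}$ forces $\univPrinCong{\pqr}{z} \subseteq \mfd$. Only the reverse inclusion requires real work.

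For the reverse inclusion, the key input is the topological fact that for any link $L \subset S^3$ the fundamental group $\pi_1(S^3\setminus L)$ is normally generated by meridians: meridional Dehn filling of every cusp recovers the simply connected $S^3$. Thus it suffices to prove that every meridian (indeed, every peripheral element at every cusp) of $\mfd$ already lies in $\univPrinCong{\pqr}{z}$.

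I will first handle the cusp of $\Mmfd$ coming from the point $\infty\in\partial\H^3$. Because $\mfd$ is torsion-free, $\mfd\cap\mathrm{Stab}_{\tetGroup{\pqr}}(\infty)$ contains no nontrivial elliptic element, so it reduces to $\mfd\cap P = P_z$. As a $\Z$-module, $P_z$ is generated by $p_z$ and $p_{uz}$, and the second generator is a $\tetGroup{\pqr}$-conjugate of the first via $p_{uz}=g_R\, p_z\, g_R^{-1}$ with $g_R=\left(\begin{smallmatrix}u&0\\0&1\end{smallmatrix}\right)\in\tetGroup{\pqr}$. Hence the entire peripheral subgroup at this cusp is contained in the normal closure $\univPrinCong{\pqr}{z}$ of $p_z$ in $\tetGroup{\pqr}$. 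For any other cusp of $\Mmfd$, I will use that $\tetGroup{\pqr}$ acts transitively on the (ideal) vertices of the tessellation to find $\gamma\in\tetGroup{\pqr}$ carrying $\infty$ to a lift of that cusp; the peripheral subgroup there is the conjugate $\gamma P_z\gamma^{-1}$, which still lies in $\univPrinCong{\pqr}{z}$ because that group is normal in $\tetGroup{\pqr}$.

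Putting these pieces together, every meridian of the link $L$ with $\Mmfd=S^3\setminus L$ lies in $\univPrinCong{\pqr}{z}$, and since meridians normally generate $\mfd$, we get $\mfd\subseteq\univPrinCong{\pqr}{z}$. Combined with the opposite inclusion from the previous lemma, this yields $\mfd=\univPrinCong{\pqr}{z}$ and hence $\Mmfd=\MunivPrinCong{\pqr}{z}$. The only delicate point is the identification $\mfd\cap\mathrm{Stab}_{\tetGroup{\pqr}}(\infty)=P_z$; once torsion-freeness of $\mfd$ is invoked, the rest is a direct manipulation of normal closures inside $\tetGroup{\pqr}$.
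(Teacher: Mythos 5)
Your argument is correct and is essentially the paper's proof written out in detail: the paper's one-line proof rests on the equivalence ``universal $\Leftrightarrow$ $\mfd$ generated by parabolic elements'' together with the fact that link groups are generated by meridians, and your computation that every peripheral subgroup of $\mfd$ is a $\tetGroup{\pqr}$-conjugate of $P_z$ and hence lies in the normal closure $\univPrinCong{\pqr}{z}$ is exactly the content of the nontrivial direction of that equivalence. The only cosmetic difference is that you obtain normal generation of $\pi_1(S^3\setminus L)$ by meridians from Dehn filling and simple connectivity of $S^3$, where the paper cites the Wirtinger presentation; this is immaterial here since $\univPrinCong{\pqr}{z}$ is normal in $\tetGroup{\pqr}$.
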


\begin{proof}
A regular tessellation $\Mmfd$ is universal if and only if the fundamental group $\mfd$ is generated by parabolic elements only. Note that the Wirtinger representation of the fundamental group of hyperbolic link complement implies that it also has this property.
\end{proof}

\section{Classification of Regular Tessellations with Small Cusp Modulus} \label{sec:FinUnivRegTess}


Once we have constructed a finite-volume universal regular tessellation for a cusp modulus $z$, we can construct all regular tessellation of cusp modulus $z$ using the techniques described later in Section~\ref{sec:allRegTessThorughGap}. This is done in Table~\ref{table:FinCat} by listing the following categories to capture the relationships between these regular tessellations:

\begin{definition} \label{def:catRegTessCuspMod}
Let $\mathcal{C}^{\pqr}_z$ denote the category of all pointed manifold regular tessellations of type $\pqr$ and cusp modulus $z$. Recall that each regular tessellation is a cover of the orbifold $\H^3/\tetGroup{\pqr}$ and pick a generic point $p_0$ in the orbifold (that is not on the singular locus or preserved under any symmetries of the orbifold). An object $(\Mmfd, p)\in\mathcal{C}^{\pqr}_z$ is a regular tessellation of type $\pqr$ and cusp modulus $z$ such that $p$ is a lift of $p_0$. A morphism is a covering map respecting the base point $p$.
\end{definition}

\begin{table}[h]
\caption{Categories $\mathcal{C}_z^{\pqr}$ up to category equivalence for small cusp modulus.
}\label{table:FinCat}
\begin{center}
\scalebox{0.9}{
$\mathcal{C}^{\pqr}_z \cong \big\{\MunivPrinCong{\pqr}{z}\big\}\quad\mbox{for}\quad \mathcal{C}^{\{3,3,6\}}_2, \mathcal{C}^{\{3,3,6\}}_{2+\zeta}, \mathcal{C}^{\{3,3,6\}}_3, \mathcal{C}^{\{3,3,6\}}_{3+2\zeta}, \mathcal{C}^{\{4,3,6\}}_{1+\zeta}, \mathcal{C}^{\{3,4,4\}}_2, \mathcal{C}^{\{3,4,4\}}_{2+i}, \mathcal{C}^{\{3,4,4\}}_{3+2i}$
}

\medskip

\scalebox{0.9}{$\mathcal{C}^{\{3,3,6\}}_{2+2\zeta} \cong\Big\{\MexceptionalObject{-3}{2+2\zeta} \xleftarrow{(\Z/2)^2} \MpglPrinCong{-3}{2+2\zeta}\Big\}$ \quad
$\mathcal{C}^{\{3,3,6\}}_{3+\zeta} \cong\Big\{\MpglPrinCong{-3}{3+\zeta} \xleftarrow{\Z/2} \MpslPrinCong{-3}{3+\zeta}\Big\}$}

\scalebox{0.9}{
$\xymatrixcolsep{0.5pc}\mathcal{C}^{\{3,3,6\}}_{4}\cong \Big\{\xymatrix@C+5mm{\MpslPrinCong{-3}{4} & \ar[l]^{\Z/2} \MpslPrinCongBig{-3}{4} & \ar[l]^{\Z/2} \ar@/_0.8pc/[ll]_{\Z/4} \MunivPrinCong{\{3,3,6\}}{4}}\Big\}$ \qquad
$\xymatrixcolsep{0.7pc}\mathcal{C}^{\{3,3,6\}}_{4+\zeta} \cong \Big\{\xymatrix@C+5mm{\MexceptionalObject{-3}{4+\zeta} & \ar[l]^{(\Z/2)^2}  \MpslPrinCong{-3}{4+\zeta} & \ar[l]^{\Z/2} \ar@/_0.8pc/[ll]_{Q_8} \MpslPrinCongBig{-3}{4+\zeta}}\Big\}$
}

\scalebox{0.9}{
$ \mathcal{C}^{\{4,3,6\}}_2\cong \Bigg\{\begin{minipage}{6.5cm} ${\xymatrixrowsep{-0.4pc} \xymatrixcolsep{1.1pc}\xymatrix@C+5mm{\mathcal{Z}_0\\ & \ar@/^0.1pc/[dl]_{(\Z/2)^2} \ar@/_0.1pc/[ul]^{}  \mathcal{Z}_1 & \ar[l]|-{\Z/2}  \ar@/_0.3pc/[llu]_{Q_8} \ar@/^0.3pc/[lld]^{Q_8} \ar[r]^{~\quad~\Z/2} \MunivPrinCong{\{4,3,6\}}{2} & \mathcal{Z}_2 \\ \bar{\mathcal{Z}}_0 }}$ \end{minipage}\Bigg\}$ \quad
$\xymatrixcolsep{0.2pc}\mathcal{C}^{\{4,3,6\}}_{2+\zeta}\cong \Big\{\xymatrix@C+5mm{\mathcal{Z}_0 & \ar[l]^{\Z/2}  \mathcal{Z}_1 & \ar[l]^{\Z/3} \ar@/_0.8pc/[ll]_{S_3} \MunivPrinCong{\{4,3,6\}}{2+\zeta}}\Big\}$
}

\scalebox{0.89}{
$ \mathcal{C}^{\{5,3,6\}}_2\cong \Bigg\{\begin{minipage}{7.3cm} ${\xymatrixrowsep{-0.4pc} \xymatrixcolsep{1.7pc}\xymatrix@C+5mm{\mathcal{Z}_0\\ & \ar@/_0.2pc/[dl]_{A_5} \ar@/^0.2pc/[ul]^{}  \mathcal{Z}_1 & \ar[l]|-{\Z/2}  \ar@/_0.3pc/[llu]_{\SL(2,5)} \ar@/^0.3pc/[lld]^{\SL(2,5)} \ar[r]^{~\quad~\Z/2} \MunivPrinCong{\{5,3,6\}}{2} & \mathcal{Z}_2 \\ \bar{\mathcal{Z}}_0 }}$ \end{minipage}\Bigg\}$}\\
\scalebox{0.9}{
$\mathcal{C}^{\{3,4,4\}}_{2+2i}  \cong    \big\{ \MexceptionalObject{-4}{2+2i} \xleftarrow{\Z/2}  \MpglPrinCong{-4}{2+2i} \big\}$ \qquad
$\mathcal{C}^{\{3,4,4\}}_{3}  \cong  \big\{ \MpglPrinCong{-4}{3} \xleftarrow{\Z/2} \MpslPrinCong{-4}{3}\big\}$}

\scalebox{0.9}{
$\mathcal{C}^{\{3,4,4\}}_{3+i}  \cong \big\{ \MexceptionalObject{-4}{3+i} \xleftarrow{\Z/3} \MpglPrinCong{-4}{3+i}\big\}$ \qquad
$\mathcal{C}^{\{3,4,4\}}_{4+i}  \cong  \big\{\MpglPrinCong{-4}{4+i} \xleftarrow{\Z/2} \MpslPrinCong{-4}{4+i}\big\}$
}
\scalebox{0.9}{
$\mathcal{C}^{\{3,4,4\}}_{4+2i} \cong \big\{
\xymatrix@C-1mm{\cdots ~~~ \MpslPrinCong{-4}{4+2i} & \ar[l]_{~\quad~\Z/2} \MpslPrinCongBig{-4}{4+2i} & \cdots & \ar@/_1.3pc/[ll]_{H}^{|H|=512}  \MunivPrinCong{-4}{4+2i} }\big\}$
}
\end{center}
\end{table}

For the categories $\mathcal{C}^{\{3,3,6\}}_z$ and $\mathcal{C}^{\{3,4,4\}}_z$, we can describe three objects as quotients by an arithmetically defined group if such a quotient happens to be a manifold. These objects are congruence manifolds with discriminant $D=-3$ (for $\{3,3,6\}$) or $D=-4$ (for $\{3,4,4\}$) and among them are the principal congruence manifolds $\MpslPrinCongBig{D}{z}$: \label{sec:PrinCongDef}
\begin{eqnarray*}
\MpslPrinCongBig{D}{z}=\H^3/\pslPrinCong{D}{z}&\quad\mbox{where}\quad&\pslPrinCongBig{D}{z}=\ker\Big(\PSL\big(2,\calO_D\big)\to \SL\big(2,\calO_D/\langle z\rangle\big)/{\pm 1}\Big)\\
\MpslPrinCong{D}{z}=\H^3/\pslPrinCong{D}{z}&\quad\mbox{where}\quad&\pslPrinCong{D}{z}=\ker\Big(\PSL\big(2,\calO_D\big)\to \PSL\big(2,\calO_D/\langle z\rangle\big)\Big),\\
\MpglPrinCong{D}{z}=\H^3/\pglPrinCong{D}{z}&\quad\mbox{where}\quad&\pglPrinCong{D}{z}=\ker\Big(\PGL\big(2,\calO_D\big)\to \PGL\big(2,\calO_D/\langle z\rangle\big)\Big).
\end{eqnarray*}
Here, $\PGL(2,R)=\mathrm{GL}(2,R)/R^*$ and $\PSL(2,R)=\SL(2,R)/\{e\in R|e^2=1\}$. For $R=\calO_D$, $\pm 1$ are the only two elements $e$ with $e^2=1$, so the map to $\SL\big(2,\calO_D/\langle z\rangle\big)/{\pm 1}$ is well-defined. 

\begin{lemma}\label{lemma:prinCongIsRegTess}
If a space $\MpslPrinCongBig{D}{z}, \MpslPrinCong{D}{z},$ respectively, $\MpglPrinCong{D}{z}$ is a manifold, then it is a regular tessellation. Thus, there are covering maps (some might be isomorphisms)
$$\MunivPrinCong{\pqr}{z}\to \MpslPrinCongBig{D}{z}\to\MpslPrinCong{D}{z}\to\MpglPrinCong{D}{z}.$$
\end{lemma}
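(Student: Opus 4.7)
The plan is to reduce the lemma to two verifications: (i) each of the three groups $\pslPrinCongBig{D}{z}$, $\pslPrinCong{D}{z}$, $\pglPrinCong{D}{z}$ sits as a normal subgroup of $\tetGroup{\pqr}$, and (ii) the chain of set-theoretic inclusions
$$\univPrinCong{\pqr}{z}\;\subseteq\;\pslPrinCongBig{D}{z}\;\subseteq\;\pslPrinCong{D}{z}\;\subseteq\;\pglPrinCong{D}{z}$$
holds inside $\tetGroup{\pqr}$. Once (i) is in hand, the manifold hypothesis provides torsion-freeness, and Definition~\ref{def:algDefRegTess} immediately upgrades each quotient to a regular tessellation of type $\pqr$; (ii) then yields the four asserted covering maps by reversing the inclusions.

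For (i) I would use the identifications recorded in the excerpt, namely $\tetGroup{\{3,3,6\}}=\PGL(2,\calO_{-3})$ and $\tetGroup{\{3,4,4\}}=\PGL(2,\calO_{-4})$, so that in both arithmetic cases $\tetGroup{\pqr}$ coincides with the ambient group $\PGL(2,\calO_D)$. The group $\pglPrinCong{D}{z}$ is normal automatically, being the kernel of a group homomorphism out of all of $\tetGroup{\pqr}$. For $\pslPrinCongBig{D}{z}$ and $\pslPrinCong{D}{z}$, which are only \emph{a priori} defined as kernels of homomorphisms from the index-two subgroup $\PSL(2,\calO_D)$, normality in the full group $\PGL(2,\calO_D)$ requires a short hand-calculation: given $[g]\in\PGL(2,\calO_D)$ with lift $g\in\mathrm{GL}(2,\calO_D)$ and $[\gamma]$ in the congruence subgroup with lift $\gamma\in\SL(2,\calO_D)$ satisfying $\gamma\equiv eI\pmod{\langle z\rangle}$ for the appropriate type of root-of-unity scalar $e$ (either $e=\pm 1$ for $\pslPrinCongBig{D}{z}$ or $e^2\equiv 1$ for $\pslPrinCong{D}{z}$), the conjugate $g\gamma g^{-1}$ still has determinant $1$ and reduces to $g(eI)g^{-1}=eI$ modulo $\langle z\rangle$, so the defining condition is preserved.

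For (ii), the leftmost inclusion follows because $p_z=\bigl(\begin{smallmatrix}1 & z\\ 0 & 1\end{smallmatrix}\bigr)$ reduces to $I$ modulo $\langle z\rangle$, hence lies in $\pslPrinCongBig{D}{z}$; by the normality established in (i), the normal closure $\univPrinCong{\pqr}{z}$ of $p_z$ in $\tetGroup{\pqr}$ must also lie in $\pslPrinCongBig{D}{z}$. The middle and right inclusions track the increasing sizes of the centers being quotiented out on the target side of the reduction-mod-$\langle z\rangle$ map: an $\SL$-element congruent to $\pm I$ mod $z$ is \emph{a fortiori} congruent to some scalar $eI$ with $e^2\equiv 1\pmod z$, and any such element is, when viewed as a $\mathrm{GL}$-element, congruent modulo $z$ to a scalar (hence central in $\mathrm{GL}(2,\calO_D/\langle z\rangle)$), giving the inclusion into $\pglPrinCong{D}{z}$.

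The only real subtlety, and so the main thing to be careful about, is the $\PGL$-normality check in step (i): one must keep the lifts $g\in\mathrm{GL}$ and $\gamma\in\SL$ straight despite the fact that the congruence subgroups are defined as kernels of homomorphisms starting from $\PSL$, and one must observe that conjugation by a $\mathrm{GL}$-matrix genuinely fixes the scalar $e$ rather than replacing it by a different square-root of unity. Everything else is a bookkeeping exercise in unwinding the definitions of $\PSL$, $\PGL$, and their principal congruence subgroups.
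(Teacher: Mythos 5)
Your proposal is correct and follows essentially the same route as the paper: reduce everything to normality of the three congruence kernels in $\tetGroup{\pqr}=\PGL(2,\calO_D)$, verified by observing that conjugation preserves the congruence condition modulo $\langle z\rangle$, and then read off the covering maps from the evident chain of inclusions starting with $p_z\in\pslPrinCongBig{D}{z}$. The only (harmless) difference is that you check conjugation by an arbitrary element of $\mathrm{GL}(2,\calO_D)$, whereas the paper uses that $\PSL(2,\calO_D)$ has index two in $\PGL(2,\calO_D)$ and checks only the single coset representative $\left(\begin{smallmatrix}u&0\\0&1\end{smallmatrix}\right)$.
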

\begin{proof}
We need to show that all the groups $\pslPrinCongBig{D}{z}, \pslPrinCong{D}{z}, \pglPrinCong{D}{z}$ are normal in the respective $\Gamma^{\{3,q,r\}}$. This is obvious for the kernel $\pglPrinCong{D}{z}$ since $\PGL(2,\calO_{-3})=\Gamma^{\{3,3,6\}}$ and $\PGL(2,\calO_{-4})=\Gamma^{\{3,4,4\}}.$ It is also obvious that $\pslPrinCong{D}{z}$ and $\pslPrinCongBig{D}{z}$ are normal in $\PSL(2,\calO_d)$ which is an index-2 subgroup of $\PGL(2,\calO_d)$ with an element in the complement being $g=\left(\begin{array}{cc}u & 0\\ 0 & 1\end{array}\right)$. Hence, it is enough to show that the conjugate $M^g$ of a matrix $M$ in $\pslPrinCong{D}{z}$ or $\pslPrinCongBig{D}{z}$ is also in $\pslPrinCong{D}{z}$, respectively, $\pslPrinCongBig{D}{z}$. This follows from $M^g$ still being in $\PSL(2,\calO_D)$ and its image in $\PSL(2,\calO_D/\langle z\rangle)$, respectively, $\PGL(2,\calO_D/\langle z\rangle)$ still being congruent to the identity matrix.
\end{proof}

If there are other objects in $\mathcal{C}^{\pqr}_z$ not listed in the above lemma, we denote them by $\mathcal{Z}_{(j)}$ and $\bar{\mathcal{Z}}_{(j)}$. The latter one is used for the mirror image of $\mathcal{Z}_{(j)}$ when $\mathcal{Z}_{(j)}$ is chiral. $Q_8$ denotes the quaternion group of order 8.

We can now classify all principal congruence link complements as a corollary of the main theorem. It is left to run the algorithm described later in Section~\ref{sec:constructPrinCong} to check whether the arithmetically defined space is an orbifold or manifold. We obtain an orbifold for the arithmetically defined spaces in exactly the following cases: $\MpslPrinCongBig{D}{z}, \MpslPrinCong{D}{z}, \MpglPrinCong{D}{z}$ with $|z|<2$ and $\MpglPrinCong{-4}{2}, \MpslPrinCong{-4}{2}, \MpglPrinCong{-3}{2}$.

\begin{figure}
\begin{center}
\scalebox{0.65}{
\input{figures_gen/OverviewFiniteVolumeRegTess.tex}}
\end{center}
\caption{Overview of regular tessellation link complements and principal congruence link complements for $\{3,3,6\}$, $D=-3$ and $\{3,4,4\}$, $D=-4$.\label{fig:OverviewArithCases}}
\end{figure}

\newpage

\begin{cor}\label{cor:classPrinCong}
A principal congruence manifold $\MpslPrinCongBig{D}{z}$ with $D=-3, -4$ and $z$ in canonical form is a link complement if and only if:
\begin{itemize}
\item $D=-3$ and $z\in\{2, 2+\zeta, 2+2\zeta, 3, 3+\zeta, 3+2\zeta, 4+\zeta\}$
\item $D=-4$ and $z\in\{2, 2+ i, 2+2i, 3, 3+i, 3+2i, 4+i\}$
\end{itemize}
\end{cor}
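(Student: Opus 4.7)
The plan is to combine Lemma~\ref{lemma:linkImpliesUniv}, Lemma~\ref{lemma:prinCongIsRegTess}, Theorem~\ref{thm:main}, and the explicit data in Table~\ref{table:FinCat}. Throughout, I set $\pqr=\{3,3,6\}$ when $D=-3$ and $\pqr=\{3,4,4\}$ when $D=-4$, so that $\tetGroup{\pqr}=\PGL(2,\calO_D)$ and every $\MpslPrinCongBig{D}{z}$ is covered by $\MunivPrinCong{\pqr}{z}$ via the map supplied by Lemma~\ref{lemma:prinCongIsRegTess}. The key reduction is that $\MpslPrinCongBig{D}{z}$ is a link complement if and only if (i) $\MpslPrinCongBig{D}{z}\cong \MunivPrinCong{\pqr}{z}$ and (ii) this universal regular tessellation is itself a link complement.

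For the ``only if'' direction, assume $\MpslPrinCongBig{D}{z}$ is a link complement. Then it is a manifold, Lemma~\ref{lemma:prinCongIsRegTess} equips it with a regular tessellation structure, and Lemma~\ref{lemma:linkImpliesUniv} forces that structure to be universal, so the cover $\MunivPrinCong{\pqr}{z}\to\MpslPrinCongBig{D}{z}$ must be a homeomorphism. In particular $\MunivPrinCong{\pqr}{z}$ is itself a link complement, so Theorem~\ref{thm:main} confines $z$ to the unmarked entries of Table~\ref{table:finiteVolUnivReg}. Of those entries, exactly two cases admit a strict cover $\MunivPrinCong{\pqr}{z}\to \MpslPrinCongBig{D}{z}$ according to Table~\ref{table:FinCat}, namely $(D,z)=(-3,4)$ (a $\Z/2$-cover) and $(D,z)=(-4,4+2i)$ (a cover of order $512$); these are discarded, leaving exactly the two lists in the statement.

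For the ``if'' direction, for each $z$ in the claimed lists the corresponding row of Table~\ref{table:FinCat} identifies $\MpslPrinCongBig{D}{z}$ with $\MunivPrinCong{\pqr}{z}$ as objects of the category $\mathcal{C}^{\pqr}_z$, whereupon Theorem~\ref{thm:main} asserts that this universal regular tessellation is the complement of a link. The hard part is not this deduction, which is largely formal once the two lemmas and the main theorem are in hand, but the underlying production of Table~\ref{table:FinCat}: this requires running the algorithm of Section~\ref{sec:ConstructUnivRegTess} at each relevant level and computing the finite quotient $\tetGroup{\pqr}/\pslPrinCongBig{D}{z}$ together with its lattice of relevant normal subgroups, so as to decide the manifold-versus-orbifold question and to locate $\pslPrinCongBig{D}{z}$, $\pslPrinCong{D}{z}$, and $\pglPrinCong{D}{z}$ inside the category.
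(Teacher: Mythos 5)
Your proposal is correct and takes essentially the same route as the paper: Lemma~\ref{lemma:prinCongIsRegTess} together with Theorem~\ref{thm:main} (whose first statement is Lemma~\ref{lemma:linkImpliesUniv}) reduces the question to whether $\MpslPrinCongBig{D}{z}\cong\MunivPrinCong{\pqr}{z}$ and the latter is a link complement, and the computed data (Table~\ref{table:FinCat}, summarized in Figure~\ref{fig:OverviewArithCases}) discard exactly $\MpslPrinCongBig{-3}{4}$ and $\MpslPrinCongBig{-4}{4+2i}$. One small imprecision: Theorem~\ref{thm:main} by itself does not exclude the $*$-marked entry $4+2i$ (its link-complement status is open), but this is harmless since your strict-cover argument from Table~\ref{table:FinCat} rules out that case anyway, which is exactly the paper's reason.
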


\begin{proof}
By Lemma~\ref{lemma:prinCongIsRegTess} and Theorem~\ref{thm:main}, $\MpslPrinCongBig{D}{z}$ can only be a link complement if it is a finite volume universal regular tessellation. There are 16 potential cases of type $\{3,3,6\}$ and $\{3,4,4\}$. In all these cases, $\MpslPrinCongBig{D}{z}$ is a manifold. But in the case $\MpslPrinCongBig{-3}{4}$ and $\MpslPrinCongBig{-4}{4+2i}$, it is not the universal regular tessellation, leaving $7$ cases for each discriminant. An overview is also given in Figure~\ref{fig:OverviewArithCases}.
\end{proof}

\section{Construction of the Universal Regular Tessellation} \label{sec:ConstructUnivRegTess}

In this section, we give an algorithm to construct $\MunivPrinCong{\pqr}{z}$  which will terminate if and only if $\MunivPrinCong{\pqr}{z}$ is finite-volume. Recall the definition of the torus $T^*_z$ from Definition~\ref{def:regTessTorus}.

\begin{definition}\label{def:Nanotube}
A nanotube is the product $\R^{\geq 0} \times T^*_z$ where $T^*_z$ is tessellated by regular $r$-gons when constructing tessellations of type $\pqr$ and cusp modulus $z$.
\end{definition}

\begin{figure}[h]
\begin{center}
\includegraphics[scale=0.25]{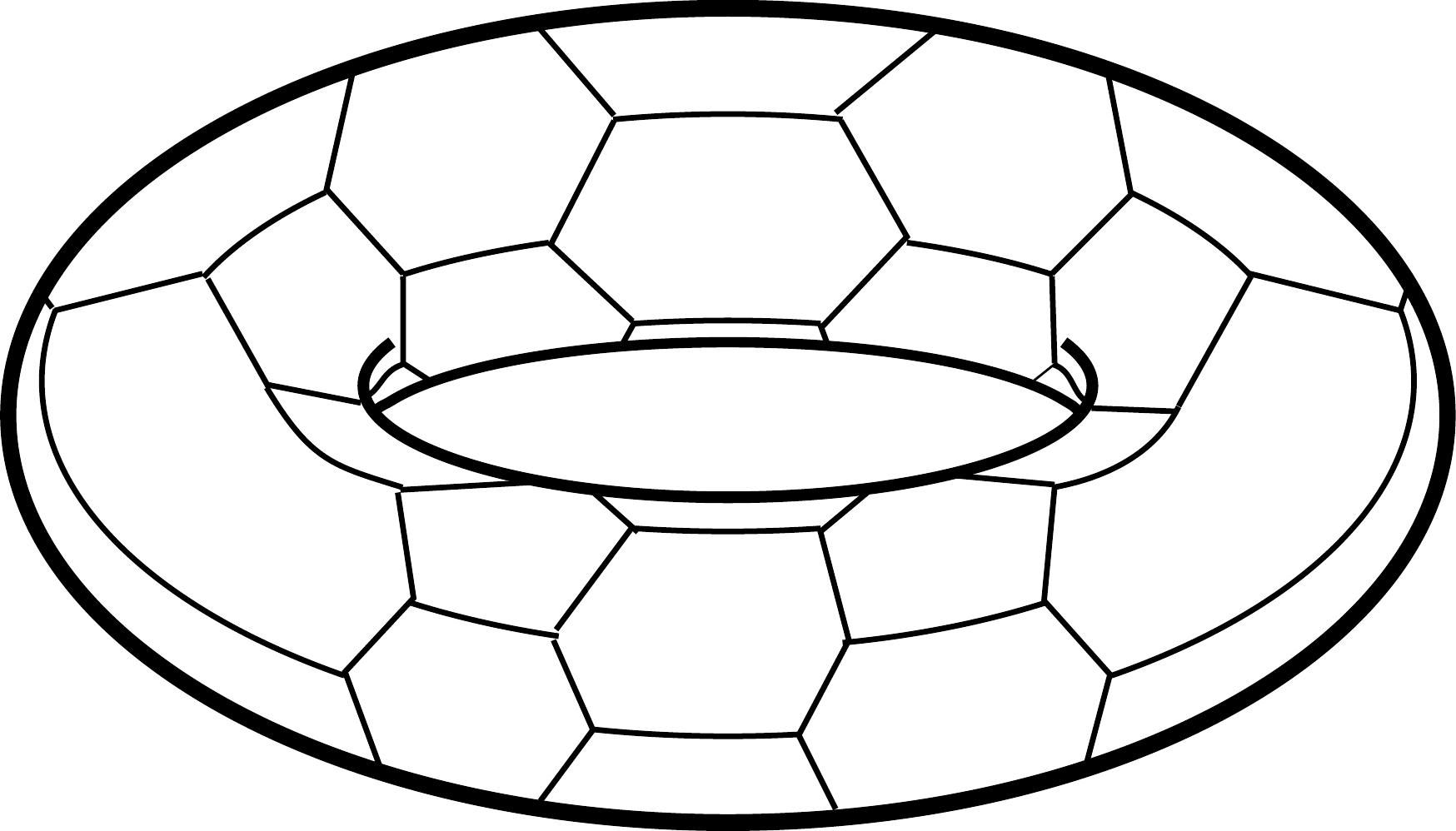}
\end{center}
\caption{A nanotube $\R^{\geq 0} \times T^*_z$ for tessellations of type $\{p,3,6\}$. The core of the torus is removed.\label{fig:nanotube}}
\end{figure}

Figure~\ref{fig:nanotube} shows an example nanotube. Whereas a regular tessellation $\Mmfd$ is built from ideal Platonic solids, its dual $\Mmfd^*$ is built from topological nanotubes $\R^{\geq 0} \times T^*_z$ such that $p$ nanotubes meet at each edge and $z$ is the cusp modulus. The reader is probably already familiar with the decomposition of a regularly tessellated cusped hyperbolic manifold into nanotubes because the nanotubes happen to be the Ford domains of the manifold. This is analogous to the regular tessellation by Platonic solids being identical to the canonical cell decomposition \cite{EpsteinPenner:CanonicalDecomp}. However, we prefer the dual nanotubes as building blocks in this section because they already have the cusp modulus encoded in them, thus making it easier to build tessellation with prescribed cusp modulus. The use of the dual as well as the term ``nanotube'' was first suggested by Ian Agol.

To obtain the universal regular tessellation $\MunivPrinCong{\pqr}{z}$, we can (roughly speaking) just glue enough nanotubes together and enforce that there are always $p$ nanotubes at an edge. A deterministic algorithm for this is described in the following definition.
\begin{definition} \label{def:UnivRegTessAlgo}
Let $\MunivPrinCong{\pqr}{z}(0)$ be a single nanotube. To obtain $\MunivPrinCong{\pqr}{z}(n+1)$ from $\MunivPrinCong{\pqr}{z}(n)$, perform the following steps:
\begin{enumerate}
\item Attach a nanotube to each open face of $\MunivPrinCong{\pqr}{z}(n)$.
\item If there is an edge $e$ of the resulting complex with $p$ nanotubes around $e$ and two open faces adjacent to $e$, we need to glue the two faces so we a get an edge cycle about $e$. If we already had an edge cycle about an edge $e$ of length different from $p$ or there are more than $p$ nanotubes adjacent to $e$, we need to identify every $p$-th nanotube. Repeat until there is no edge left for which such a gluing or identification is necessary.
\end{enumerate}
\end{definition}

\begin{figure}[h]
\begin{center}
\includegraphics[scale=0.35]{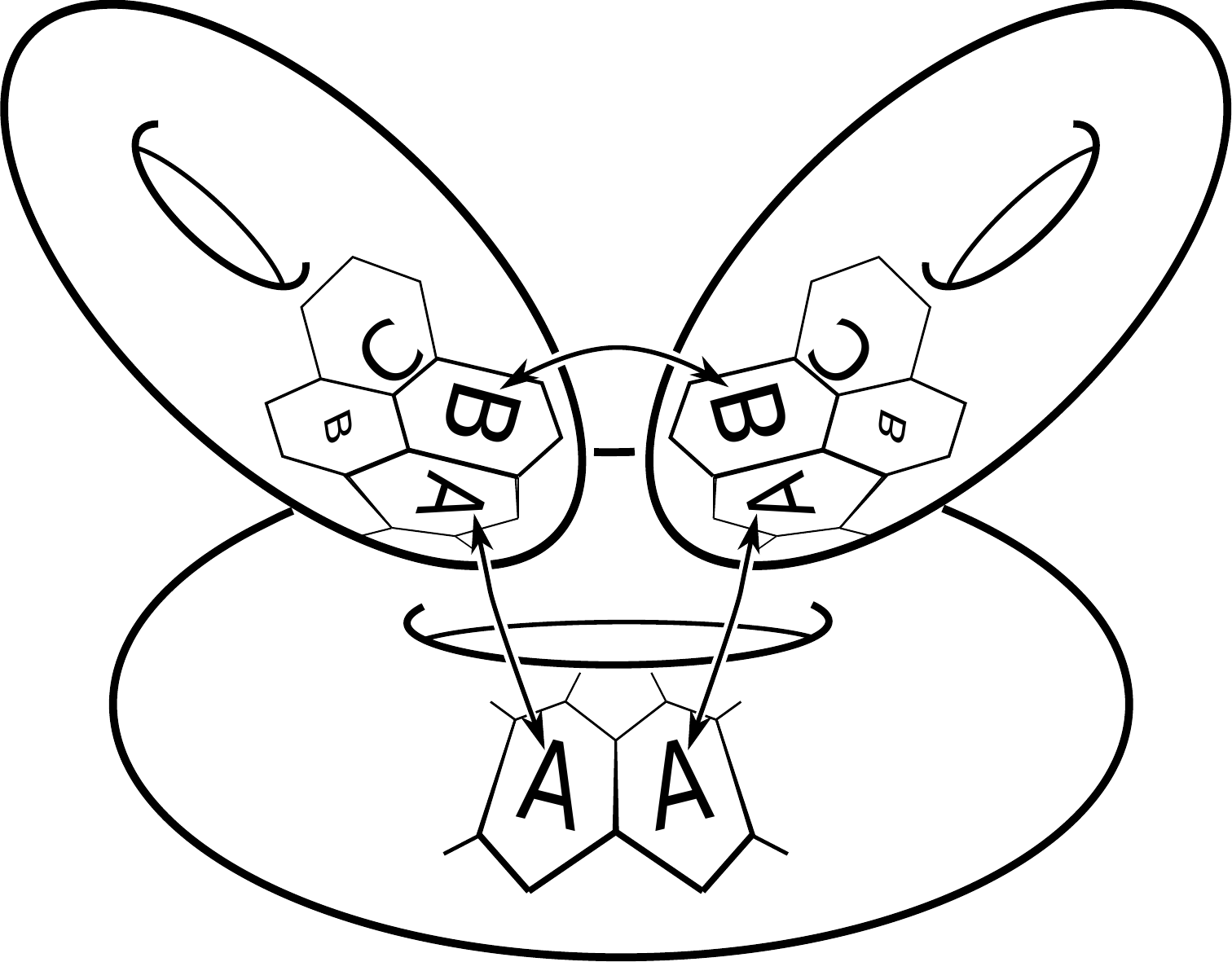}
\end{center}
\caption{Attaching nanotubes to $\MunivPrinCong{\{3,3,6\}}{z}(0)$. Only two of the new nanotubes are shown here, those will be attached along faces $A$. In step 2, the $B$ faces will be glued. Thus, the edges of an $A$ face are glued up to 3-cycles. The edge that the $C$ faces share with a $B$ face will be labeled by 2.\label{fig:nanotubeAttach}}
\end{figure}

\begin{remark}
$\MunivPrinCong{\pqr}{z}(n+1)$ is well-defined as the result is independent of the order in which we process the edges in step 2. To see this, note that the space $\MunivPrinCong{\pqr}{z}(n+1)$ is the quotient of the complex obtained from step 1 under a certain equivalence relationship. Namely, this relationship is the minimal equivalence relationship closed under the operation of rotating a point near any edge $e$ to the next nanotube $p$ times. It does not matter in which order we perform this operation to obtain the minimal relationship closed under this operation.
\end{remark}

The open faces of $\MunivPrinCong{\pqr}{z}(n)$ form a 2-complex, but since we never glue two nanotubes along edges, only along faces, they even form a surface (or potentially, a 2-orbifold, but only for small $z$, see Remark~\ref{remark:orbifold}). This surface is orientable, connected, and tessellated by $r$-gons. We will label an edge $e$ of $\partial \MunivPrinCong{\pqr}{z}(n)$ with a numeral 1, \dots, $p-1$ indicating the number of nanotubes in $\MunivPrinCong{\pqr}{z}(n)$ that are adjacent to $e$.

An example for $\{3,3,6\}$ is shown in Figure~\ref{fig:nanotubeAttach}. In later iterations, there are edges labeled by $2$. Choose one, say $e$. We again attach two nanotubes to the two open faces adjacent to $e$, so we have four nanotubes around $e$ now. Thus, we need to identify the two newly attached nanotubes with each other. As before, the edge $e$ will become a 3-cycle and disappear from $\MunivPrinCong{\pqr}{z}(n+1)$. Effectively, we added only one nanotube glued to the two open faces of $\MunivPrinCong{\pqr}{z}(n)$ adjacent to $e$ to close up the edge to a 3-cycle.

\begin{remark}
If the map $\MunivPrinCong{\pqr}{z}(n)\to\MunivPrinCong{\pqr}{z}(n+1)$ is an embedding, the edge-labeled tessellation of $\partial\MunivPrinCong{\pqr}{z}(n+1)$ can be determined purely from $\partial\MunivPrinCong{\pqr}{z}(n)$, so it is enough to look at the surface for studying the evolution of the algorithm.
\end{remark}

\begin{lemma}
\label{lemma:ConstructionTerminates}
Let $\Mmfd$ be a regular tessellation of a finite-volume hyperbolic 3-manifold of type $\pqr$ with cusp modulus $z$. There is a map $f_n:\MunivPrinCong{\pqr}{z}(n)\to \Mmfd$ which is unique once we identify $\univPrinCong{\pqr}{z}(0)$ with a Ford domain of $\Mmfd$. For $n$ large enough, $f$ will be surjective. If $\Mmfd$ is furthermore a universal regular tessellation $\MunivPrinCong{\pqr}{z}$, then $\MunivPrinCong{\pqr}{z}(n)\cong \MunivPrinCong{\pqr}{z}$ for a large enough $n$. In other words, if $\MunivPrinCong{\pqr}{z}$ is finite volume, there is an $n$ such that $\partial \MunivPrinCong{\pqr}{z}(n)$ is empty.
\end{lemma}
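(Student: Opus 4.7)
The plan is to build $f_n$ inductively, verify at each step that the combinatorial gluings and identifications enforced by the algorithm correspond to actual identifications in $\Mmfd$, and then invoke finite volume to see that the process closes up.

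First I would construct $f_0$ as the inclusion of a chosen Ford domain $D_0 \subset \Mmfd$, so that $\MunivPrinCong{\pqr}{z}(0)$ is literally identified with $D_0$ (this uses the hypothesis that the cusp modulus of $\Mmfd$ is $z$, so the boundary torus structures match). The inductive step for step 1 of the algorithm is forced: each open face $F$ of $\MunivPrinCong{\pqr}{z}(n)$ maps under $f_n$ to an interior face of $\Mmfd$, and the Ford domain on the other side of that face in $\Mmfd$ is the only candidate for the image of the newly attached nanotube. The substance of the argument lies in verifying compatibility with step 2. Because $\Mmfd$ is a type $\pqr$ regular tessellation, exactly $p$ nanotubes meet at every edge (dual to the fact that exactly $p$ Platonic solids meet at every edge of $\Mmfd$). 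Hence whenever the algorithm sees $p$ nanotubes around an edge and glues two open faces, or sees more than $p$ and identifies every $p$-th nanotube, the same identification must already be present in $\Mmfd$, so $f_n$ descends through the quotient of step 2. Uniqueness of $f_n$ follows because every nanotube of $\MunivPrinCong{\pqr}{z}(n)$ is reached from $\MunivPrinCong{\pqr}{z}(0)$ by a chain of face-adjacencies, each of which forces the image of the next nanotube.

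For surjectivity, the finite-volume hypothesis implies $\Mmfd$ contains only finitely many nanotubes, and the face-adjacency graph on them is connected because $\Mmfd$ is connected. Thus once $n$ exceeds the combinatorial diameter of this graph, every nanotube of $\Mmfd$ lies in the image of $f_n$.

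For the universal case $\Mmfd = \MunivPrinCong{\pqr}{z}$, I would argue that $f_n$ is in fact a cellular isomorphism for large $n$. The compatibility argument shows $f_n$ is injective on the interior of $\MunivPrinCong{\pqr}{z}(n)$, since the algorithm imposes only the edge-cycle relation, and by the definition of $\univPrinCong{\pqr}{z}$ as the normal closure of $p_z$ in $\tetGroup{\pqr}$, that relation (together with $p_z$, which is already built into $\MunivPrinCong{\pqr}{z}(0)$) generates every identification in $\MunivPrinCong{\pqr}{z}$. Once surjectivity holds, each open face of $\partial \MunivPrinCong{\pqr}{z}(n)$ maps to an interior face of $\MunivPrinCong{\pqr}{z}$ whose two adjacent nanotubes are both already present in $\MunivPrinCong{\pqr}{z}(n)$; then one further iteration attaches a nanotube at that open face (step 1) and step 2 immediately identifies it with the already-present nanotube via the edge-cycle of length $p$, closing off that boundary face. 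Since there are only finitely many boundary faces at stage $n$ and each disappears after a bounded number of further iterations, $\partial \MunivPrinCong{\pqr}{z}(n)$ is empty for all sufficiently large $n$.

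The main obstacle is to rigorously verify the compatibility with step 2 in both directions: that every identification made by the algorithm is genuinely present in $\Mmfd$, and conversely that once $f_n$ is surjective, every remaining identification in $\MunivPrinCong{\pqr}{z}$ gets captured by the algorithm after finitely many more iterations. Both statements reduce to the assertion that the only relations among face-adjacency words in the quotient $\H^3/\univPrinCong{\pqr}{z}$ --- beyond those visible inside a single Ford domain --- are the edge-cycle relations of length $p$, together with the translations by $\langle z\rangle$ implicit in the starting torus $T^*_z$. This is precisely what the algorithm enforces, so the termination and isomorphism claim follow.
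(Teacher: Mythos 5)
Your proposal is correct and follows essentially the same route as the paper: build $f_n$ inductively using that the algorithm enforces only the cusp-modulus and edge-cycle relations, which hold in any regular tessellation $\Mmfd$ of cusp modulus $z$; get surjectivity from connectedness and the finiteness of the cusps (your Ford-domain face-adjacency graph is exactly the paper's ``neighboring cusps'' graph); and in the universal case conclude that $f_n$ is eventually an isomorphism, hence $\partial\MunivPrinCong{\pqr}{z}(n)$ empties, because those two relations are precisely the defining relations of $\univPrinCong{\pqr}{z}$ as the normal closure of $p_z$. The key converse assertion you flag at the end (that no further identifications beyond these are needed, and that they are all realized at some finite stage) is left at the same level of detail in the paper's own proof, so there is no substantive difference in approach.
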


\begin{proof}
The existence of $f$ is trivial for $\MunivPrinCong{\pqr}{z}(0)$ and follows inductively for $\MunivPrinCong{\pqr}{z}(n)$ from the construction which enforces only the cusp modulus and edge cycle as relations. 
We say that two cusps of $\Mmfd$ are neighbors if they span an edge in the regular tessellation of $\Mmfd$. Note that the $\mathrm{Im}(f_0)$ covers one cusp of $\Mmfd$, $\mathrm{Im}(f_1)$ covers that cusp and its neighbors, $\mathrm{Im}(f_2)$ covers that cusp, its neighbors, and its neighbors' neighbors and so on. Hence, because $\Mmfd$ is connected and has only finitely many cusps,  $f_n$ will eventually be surjective. Now consider the case that $\Mmfd$ is a finite volume universal regular tessellation. Since the only two relations used in the construction of $\MunivPrinCong{\pqr}{z}(n)$ are the cusp modulus and the edge cycle, the map $f_n$ will be an isomorphism for $n$ large enough. Since $\MunivPrinCong{\pqr}{z}(n)\cong \MunivPrinCong{\pqr}{z}$, there will be no open faces in $\MunivPrinCong{\pqr}{z}(n)$ and $\partial \MunivPrinCong{\pqr}{z}(n)$ will be empty.
\end{proof}

\begin{remark}\label{remark:orbifold}
The algorithm can produce an orbifold for small $z$, for example, $\MunivPrinCong{\{3,3,6\}}{1}$. 
$\partial\MunivPrinCong{\{3,3,6\}}{1}(0)$ consists of a single hexagon. Step 1 doubles the space along the hexagon, thus all edges are closed up and there are two nanotubes about an edge. Step 2 now identifies every third nanotube about an edge and since $\mathrm{gcd}(2,3)=1$, there will be only one nanotube adjacent to $e$ after identification. Thus, we have introduced singular locus of order 3. There is always a map $\MunivPrinCong{\pqr}{z}(n)\to\MunivPrinCong{\pqr}{z}(n+1)$, but in this case, the map will not be an embedding for $n=0$. The cases in which $\MunivPrinCong{\pqr}{z}$ is an orbifold are shown in Table~\ref{table:finiteVolUnivReg}.
\end{remark}

\begin{remark}
If the map $\MunivPrinCong{\pqr}{z}(n)\to\MunivPrinCong{\pqr}{z}(n+1)$ is an embedding for all $n$, the construction also gives an algorithm to solve the word problem for the group $\univPrinCong{\pqr}{z}$.
\end{remark}

\section{Computer Implementation} \label{sec:ImplDetails}

The algorithm described in the previous section has been implemented. The source code and all other files necessary for the reader to easily certify the correctness of the results in this paper are available at \url{http://www.unhyperbolic.org/regTess/}.
We encourage the reader to read and experiment with the well-commented code for details. Each subsection starts with an example how to use the software followed by implementation details.

Furthermore, we want to point out that \texttt{regularTessellations.py} contains a new triangulation data structure that allows not just gluing tetrahedra but also identifying them (see Remark~\ref{remark:recursiveIdentification}). This seems to be a useful feature in general, e.g., for constructing quotient spaces but neither SnapPy \cite{SnapPy:SnapPy} nor Regina \cite{burton:regina} implement it.

\subsection{Generating Triangulations with \texttt{regularTessellations.py}}

\subsubsection{The Universal Regular Tessellation $\MunivPrinCong{\pqr}{z}$} \label{sec:pythonForUniv}

Definition~\ref{def:UnivRegTessAlgo} described the algorithm in terms of nanotubes, but we use triangulations here as they are easier to work with and can also be exported into existing 3-manifold software such as Regina or SnapPy. We obtain the triangulation of a nanotube through the barycentric subdivision, i.e., the subdivision on $\R^{\geq 0}\times T_z^*$ induced from the barycentric subdivision of $T_z^*$. After gluing such subdivided nanotubes, the resulting triangulation is also the barycentric subdivision of the regular tessellation (recall that the unsubdivided nanotubes corresponded to the dual Ford domains but barycentric subdivision is invariant under duality). Note that such a triangulation has finite vertices but still can be imported into SnapPy, as SnapPy performs algorithms to remove finite vertices without changing the topology of the manifold upon import. 

The following example shows how this triangulation of the universal regular tessellation, here $\MunivPrinCong{\{3,3,6\}}{2+\zeta}$, can be constructed. As regular tessellation, $\MunivPrinCong{\{3,3,6\}}{2+\zeta}$ consists of 28 regular ideal tetrahedra. Thus its barycentric subdivision consists of 672 simplices, each ideal tetrahedron contributing 24. The last two lines of code convert the data to a Regina triangulation and write it to a file that can be read with SnapPy. Only those last two lines actually depend on Regina being installed, whereas all other methods work in pure Python 2.x:

\begin{verbatim}
>>> from regularTessellations import *
>>> tessConext = TessellationContext(3,3,6,2,1)
>>> tess = tessContext.UniversalRegularTessellation()
>>> len(tess), len(tess)/24
672, 28
>>> reginaTrig = TetrahedraToReginaTriangulation(tess)
>>> open('2_plus_1_zeta_tets.trig','w').write(reginaTrig.snapPea())
\end{verbatim}

We now describe in more detail how this is implemented and refer the reader to the python code for details.
We first need to write a nanotube factory that produces the triangulation of a nanotube obtained by barycentric subdivision. An algorithm to create the resulting triangulation is easily implemented and we spare the reader with the details, only mentioning the conventions we use here (also see Figure~\ref{fig:hypOct}):
We label the vertices of a simplex in this subdivision such that vertex 0 is ideal, 1 at the center of a face of the nanotube, 2 at an edge adjacent to the face, and 3 at a vertex adjacent to the edge. Thus, face $i$ opposite to vertex $i$ is always glued to face $i$ of another simplex such that the face pairing permutation is trivial, i.e., such that the vertex $j~(\not = i)$ is glued to vertex $j$, and the only gluing data we need to store per simplex is one reference to another simplex per face. We also label the simplices with an orientation so that two neighboring simplices always have opposite orientations since each face gluing reverses the orientation. Every face 0 of a newly created nanotube is unglued.

Step 1 of Definition~\ref{def:UnivRegTessAlgo} just invokes the nanotube factory to create a new nanotube $F$ for each open $r$-gon $A'$ of $\MunivPrinCong{\pqr}{z}(n)$ and then glues some $r$-gon $A$ of $F$ to $A'$. An $r$-gon is formed by $2r$ simplices, so this involves gluing $2r$ simplices of $\MunivPrinCong{\pqr}{z}(n)$ to $2r$ simplices of $F$ along face 0 and we must be careful to glue them in such a way that a positively oriented simplex is glued to a negatively oriented simplex.

Then we need to apply step 2 to each simplex $T$. Let $T_i$ be the simplex glued to face $i$ of $T$. If the face is unglued, we say that $T_i$ does not exist. Similarly, let $T_{a_1\dots a_j}$ be the simplex glued to face $a_j$ of simplex $T_{a_1\dots a_{j-1}}$. If simplex $T_{1010\dots 10}$ exists, then identify $T_{1010\dots 10}$ with $T$ if they are not already identified. Otherwise, glue $T$ to $T_{1010\dots 1}$ along face 0 if face 0 of $T$ is unglued and $T_{1010\dots 1}$ exists. Here, strings such as ``1010\dots 1'' are supposed to contain ``1'' $p$ times.

We need to iterate step 2 until no identification or gluing happened in an iteration. This is because gluing up one edge might trigger that nearby edges have more adjacent simplices and need to be glued up.

It is left to write a loop that repeats step 1 and 2 until there are no open faces.

\begin{remark}\label{remark:recursiveIdentification}
When we identify two simplices $T$ and $T'$, the identification needs to be pushed through the already existing gluings, e.g., if $T_0$ is glued to $T$ along face 0 and $T'_0$ is glued to $T'$, then $T_0$ and $T'_0$ need to be identified as well if they are different, and this identification then needs to be recursively pushed through as well. If only $T$ is glued to a simplex $T_0$, but  face 0 of $T'$ is unglued, the simplex resulting from identifying $T$ and $T'$ will be glued to $T_0$.
\end{remark}

\subsubsection{Arithmetically Defined Regular Tessellations} \label{sec:constructPrinCong}

The following example shows how to construct $\MpslPrinCong{-3}{2}$ and $\MpglPrinCong{-3}{2}$. We see that $\MpslPrinCong{-3}{2}$ is a double cover of $\MpglPrinCong{-3}{2}$ (they are actually the manifold and orbifold in Figure~\ref{fig:5chainLink}).

\begin{verbatim}
>>> tessContext = TessellationContext(3,3,6,2,0)
>>> X = tessContext.PrincipalCongruenceManifold('X')
>>> len(X)
240
>>> tessContext = TessellationContext(3,3,6,2,0)
>>> Y = tessContext.PrincipalCongruenceManifold('Y')
>>> len(Y)
120
\end{verbatim}

The algorithm works as follows: To construct $\MpslPrinCongBig{D}{z}, \MpslPrinCong{D}{z},$ respectively, $\MpglPrinCong{D}{z}$, take the triangulation of $\MunivPrinCong{\pqr}{z}(n)$ from Section~\ref{sec:pythonForUniv} with $n$ large enough and fix a positively oriented simplex of it, say $\tilde{T}$. Lift $\tilde{T}$ to the universal cover $\H^3$ such that it becomes the simplex in Figure~\ref{fig:hypOct} (for $D=-3$, vertex 3 is above $(1+\zeta)/3$ instead of $(1+i)/2$). We can now label each simplex $T$ with positive orientation by a $2\times 2$ matrix that would take $\tilde{T}$ to $T$ in $\H^3$. To start, label $\tilde{T}$ by the identity. Let
$$g_P=\left(\begin{array}{cc}0 & 1\\ -1 & 1\end{array}\right),\quad
g_Q=\left(\begin{array}{cc}-1/u & 1\\ 0 & 1\end{array}\right)\quad\mbox{and}\quad
g_R=\left(\begin{array}{cc}u & 0\\ 0 & 1\end{array}\right).$$
They correspond to the rotations indicated in Figure~\ref{fig:hypOct}.
Assume $T$ is labeled by $g\in\mathrm{GL}(2,\Z[u])$. Let $T_{a_1}$ be the simplex glued to face $a_1$ of $T$ and $T_{a_1a_2}$ be the simplex glued to face $a_2$ of $T_{a_1}$. We assign the label $gg_P$ to $T_{01}$, $gg_Q$ to $T_{12}$, $gg_R$ to $T_{23}$, $gg_Pg_Q$ to $T_{02}$, $gg_Qg_R$ to $T_{13}$ and $gg_Pg_Qg_R$ to $T_{03}$. We need to identify two simplices if the images of their labels in $\mathrm{PGL}(2,\Z[u])$ differ by a matrix in $\pslPrinCongBig{D}{z}, \pslPrinCong{D}{z},$ or $\pglPrinCong{D}{z}$. We can represent the image in $\mathrm{PGL}(2,\Z[u])$ by a matrix $u^k g$ such that the determinant $\|u^k g\|$ is either 1 or $u$. Replace each label $g$ by such a matrix.

It turns out that the coefficients of the labels $g\in \mathrm{GL}(2,\Z[u])$ explode when computing the products. Hence, we instead label the simplices by pairs $(\|g\|, g \mod z)$ where $g$ is normalized as above such that $\|g\|$ is either 1 or $u$. We need to store the determinant here as well, because it is not determined by $(g\mod z)$ alone for small $z$. Now identify two simplices if their labels have
\begin{itemize}
\item the same $(g \mod z)$ up to $\pm 1$ and the same $\|g\|$ (for $\MpslPrinCongBig{D}{z}$)
\item the same $(g \mod z)$ up to $(\Z[u]/\langle z\rangle)^*$ and the same $\|g\|$ (for $\MpslPrinCong{D}{z}$)
\item the same $(g \mod z)$ up to $(\Z[u]/\langle z\rangle)^*$ (for $\MpglPrinCong{D}{z}$).
\end{itemize}

If there are open faces left, $n$ was not chosen large enough.

\subsubsection{Orbifold Detection}

For a triangulation returned by one of the above algorithms, we can check the number of simplices around each edge. If these numbers are $2p, 2q, 2r, 4, 4, 4$ for the respective edges of each simplex, it is the triangulation of a hyperbolic manifold, otherwise, an orbifold. This can be checked as follows (continuing previous example, recall that $Y$ was the orbifold and $X$ the manifold in Figure~\ref{fig:5chainLink}):

\begin{verbatim}
>>> tessContext.IsManifold(Y)
False
>>> tessContext.IsManifold(X)
True
\end{verbatim}

\subsection{Finding All Regular Tessellations Using \texttt{regularTessellations.g}}

\label{sec:allRegTessThorughGap} \label{sec:groupRep}

Recall that every regular tessellation of type $\pqr$ and cusp modulus $z$ can be obtained as quotient of $\MunivPrinCong{\pqr}{z}/N$ by a normal subgroup $N\triangleleft G$ where $G=\mathrm{Isom}^+(\MunivPrinCong{\pqr}{z})$ is the orientation-preserving symmetry group of $\MunivPrinCong{\pqr}{z}$. If $\MunivPrinCong{\pqr}{z}$ is finite-volume, we can use Gap \cite{gap:gap} to find all suitable normal subgroups and thus classify all regular tessellations in these cases. Here is an example for $\{3,3,6\}$ and $z=4$:

\begin{verbatim}
gap> Read("regularTessellations.g");
gap> G:=SymmetriesUniversalRegularTessellationPermGroup(3,3,6,4,0);
<permutation group of size 7680 with 3 generators>
gap> L:=AllRegularTessellationsFromUniversalRegularTessellation(G);;
gap> List(L,StructureDescription);
[ "1", "C2", "C4" ]
gap> IsSubgroup(L[3],L[2]);
true
\end{verbatim}

This shows that there are three manifold regular tessellations with cusp modulus $4$: $\MunivPrinCong{\{3,3,6\}}{4}$ and two extra manifolds covered by $\MunivPrinCong{\{3,3,6\}}{4}$ such that the Decktransformations are $\Z/2$, respectively, $\Z/4$. We also see that one of these extra manifolds is covered by the other one.

Here, we use the following representation for $G$ (where $z=a+bu$):
\begin{equation} \label{eqn:groupRep}
G\cong \langle P, Q, R | P^p, Q^q, R^r, (PQ)^2, (QR)^2, (PQR)^2, (QR^{r/2+1})^a (RQR^{r/2})^b\rangle.
\end{equation}
Fixing a simplex $\tilde{T}$ of $\MunivPrinCong{\pqr}{z}$, the three generators can be identified with rotations about edges of $\tilde{T}$ as shown in Figure~\ref{fig:hypOct}. Namely, a right multiplication by $P$ means we go along face 0 and 1. Analogously, $Q$ goes along face 1 and 2, and $R$ along face 2 and 3.

Given a normal subgroup $N$ of $G$, we need to check that the quotient is a manifold regular tessellation of the same cusp modulus. Let $C_1=\langle P\rangle$, $C_2=\langle PQ\rangle$, $C_3=\langle PQR\rangle$, $C_4=\langle QR\rangle$, $C_5=\langle Q\rangle$ and $C_6=\langle R\rangle$ be the cyclic subgroups of the rotations about one of the six edges of $T_0$. Let $B=\langle Q,R\rangle$ be the subgroup of elements fixing the cusp corresponding to vertex 0 of $T_0$. Then, the quotient $\MunivPrinCong{\pqr}{z}/N$ by a normal subgroup $N\triangleleft G$ is a manifold if and only if $N$ intersects each of the six $C_i$ trivially and has the same cusp modulus if and only if $N$ intersects $B$ trivially.

It should be noted that a quotient $\MunivPrinCong{\pqr}{z}/N$ can be chiral even though $\MunivPrinCong{\pqr}{z}$ is amphicheiral. Here is an example for regular tessellations of type $\{4,3,6\}$ and cusp modulus $z=2$:
\begin{verbatim}
gap> G:=SymmetriesUniversalRegularTessellationPermGroup(4,3,6,2,0);;
gap> L:=AllRegularTessellationsFromUniversalRegularTessellation(G);;
gap> IsAmphicheiralRegularTessellation(L[4],G);
false
gap> mu:=MirrorIsomorphismUniversalRegularTessellation(G);;
gap> Image(mu,L[4])=L[5];
true
\end{verbatim}

We see that the fourth regular tessellation in the list is chiral and that its mirror image is the fifth regular tessellation in the list (Table~\ref{table:FinCat} lists them as $\mathcal{Z}_0$ and $\bar{\mathcal{Z}}_0$).

To detect this in general when $\MunivPrinCong{\pqr}{z}$ is amphicheiral, let $\mu$ be the group automorphism obtained by conjugating $G=\mathrm{Isom}^+(\MunivPrinCong{\pqr}{z})$ with an orientation-reversing symmetry:
$$\mu: G\to G, P\mapsto Q^{-1} P^{-1} Q, Q\mapsto Q^{-1}, R\mapsto R^{-1}.$$
Now the mirror image of a quotient $\MunivPrinCong{\pqr}{z}/N$ is given by $\MunivPrinCong{\pqr}{z}/\mu(N)$ and the quotient is amphicheiral if and only if $N=\mu(N)$.

\newpage

\subsection{Proving a Manifold Is a Link Complement} \label{sec:howToFindDehnFillings}

\subsubsection{Link Complement Certificates}

For 19 of the 21 potential finite-volume manifold universal regular tessellations $\MunivPrinCong{\pqr}{z}$ (those not marked with a ``$*$'' or a ``?'' in Table ~\ref{table:finiteVolUnivReg}), we provide SnapPy files named \texttt{\dots{}with\_meridians.trig} certifying that $\MunivPrinCong{\pqr}{z}$ is indeed a link complement. Except for $\MunivPrinCong{\{5,3,6\}}{2}$, each respective file contains a triangulation that is homeomorphic to $\MunivPrinCong{\pqr}{z}$. The peripheral curves saved in the file are such that $(1,0)$-Dehn-filling along each cusp yields a manifold with trivial fundamental group and, hence, homeomorphic to $S^3$ by Perelman's Theorem.

For $\MunivPrinCong{\{5,3,6\}}{2}$, we use the quotient $\Mmfd=\MunivPrinCong{\{5,3,6\}}{2}/H$ by a suitable subgroup $H\cong\Z/15$ which has the same cusp modulus.
$(1,0)$-Dehn-filling $\Mmfd$ now yields a manifold $L$ with fundamental group $\Z/15$ (actually a lens space by Geometrization). Thus, lifting the embedded $\Mmfd\subset L$ to the universal cover of $L$ gives a link complement $\tilde{\Mmfd}\subset S^3$. We need to verify that $\tilde{\Mmfd}$ is indeed $\MunivPrinCong{\{5,3,6\}}{2}$. It is enough to show that $\tilde{\Mmfd}\to\Mmfd$ is a cuspidal covering map as defined in Section~\ref{sec:cuspidalCovers}. The cuspidal homology $H^{cusp}_1(\Mmfd)\cong\Z/15$ is equal to $H_1(L)\cong\pi_1(L)$ of the filled manifold $L$. So, $H_1(\partial\Mmfd)$ dies in $H_1(L)$ and a cusp of $\Mmfd$ lifts to 15 disjoint copies in $\tilde{\Mmfd}$.

The reader can thus prove that a universal regular tessellation is a link complement by:
\begin{enumerate}
\item Following the steps in \ref{sec:pythonForUniv} to write $\MunivPrinCong{\pqr}{z}$ to a SnapPy file (for $\Mmfd=\MunivPrinCong{\{5,3,6\}}{2}/H$, using \texttt{tess=DodecahedralUniversalQuotientedByC15()} which also verifies that the quotient has the same cusp modulus).
\item Comparing the resulting manifold with the provided file using SnapPy \texttt{Manifold}'s \texttt{is\_isometric\_to}. (In case of $\MunivPrinCong{\{5,3,6\}}{2}$, also verifying $H_1(\Mmfd)=\Z^{40}\oplus\Z/15$ and that $\Mmfd$ has 40 cusps).
\item Dehn-filling the provided manifold and check that $\pi_1$ is trivial (in case of $\MunivPrinCong{\{5,3,6\}}{2}$, $\pi_1\cong\Z/15$). This has been automated in the script\\ \texttt{universalRegularTessellationLinkComplementsProofs.py}.
\end{enumerate}

\subsubsection{Method Used To Find the Certificates} \label{subsubsec:findingSlopes}

It turns out that the existing methods to find exceptional slopes did not suffice (see discussion section), so we just briefly sketch the procedure we used to find the right slopes for the Dehn-fillings of a $\MunivPrinCong{\pqr}{z}$. We iterated the following steps in SnapPy until there were only a few unfilled cusps left:
\begin{enumerate}
\item Pick the next unfilled cusp of the manifold.
\item Among the slopes (1,0), (0,1), (1,1), (-1,1), fill along the one resulting in the least ``volume'' as reported by SnapPy's \texttt{volume} and for which $H_1=\Z^c$ where $c$ is the number of unfilled cusps.
\item Every couple of iterations, replace the deformed manifold with the filled manifold (from \texttt{filled\_triangulation()})
\end{enumerate}

For the last few unfilled cusps, we use a brute force search among all combinations of the above slopes until we find one for which SnapPy computes a presentation of the fundamental group with no generators. Once suitable slopes are found, the above certificates can be created using SnapPy's \texttt{set\_peripheral\_curves}.

In case of $\MunivPrinCong{\pqr}{z}$, we use Gap to find cyclic subgroups $H$ of $G$ from presentation~\eqref{eqn:groupRep} that intersect all conjugates of the subgroups $C_i$ and $B$ trivially. Given such an $H$, we find a word representing the generator of $H$ and use it to identify a positively oriented tetrahedron in $\MunivPrinCong{\{5,3,6\}}{2}$ with the image of the tetrahedron when applying the word to it (see~\eqref{eqn:groupRep} and the following comment to see how a word acts on the tetrahedra). The result is the quotient space $\Mmfd=\MunivPrinCong{\{5,3,6\}}{2}/H$. For one of these cyclic subgroups $H$, applying techniques analogous to the above to $\Mmfd$ yields the certificate described earlier.

\section{Proof of Infinite Universal Regular Tessellation for Large-Enough Cusp Modulus} \label{sec:proofInfLarge}

In this section, we always assume that $z$ has combinatorial length 6 or greater where combinatorial length is defined as:

\begin{definition}
The combinatorial length $\|z\|$ of $z\in \calO_D$ is the minimal number $n$ of units $z_1,\dots,z_n\in\calO_D^*$ such that $z=z_1+\dots+z_n.$
\end{definition}

The combinatorial length is a norm on $\calO_D$, invariant under complex conjugation and multiplication by a unit. If $z$ is in canonical form $a+bu$, then the combinatorial length $\|z\|$ is just $a+b$.
\begin{theorem}\label{thm:bigMod}
Let $z\in\calO_{-3}$ and $\|z\|\geq 6$. Then, $\MunivPrinCong{\{3,3,6\}}{z}$, $\MunivPrinCong{\{4,3,6\}}{z}$, and $\MunivPrinCong{\{5,3,6\}}{z}$ are infinite volume. Let $z=a+bi\in\calO_{-4}$ and $\|z\|\geq 9$ or $a\geq 6$, then $\MunivPrinCong{\{3,4,4\}}{z}$ is infinite volume.
\end{theorem}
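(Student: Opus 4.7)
The plan is to apply the contrapositive of Lemma~\ref{lemma:ConstructionTerminates}: it suffices to show that for $\|z\|$ above the stated threshold, the iterative construction of Definition~\ref{def:UnivRegTessAlgo} never closes up, i.e., $\partial \MunivPrinCong{\pqr}{z}(n) \neq \emptyset$ for every $n \geq 0$. Since $\partial \MunivPrinCong{\pqr}{z}(n)$ is a connected oriented surface tessellated by open $r$-gons whose edges are labeled in $\{1, \ldots, p-1\}$ by the number of nanotubes already meeting at the corresponding 3-manifold edge, my strategy is to equip this surface with a cone-metric and track an invariant that stays strictly positive.

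Concretely, I would model each open $r$-gon as a regular polygon with corner angle $2\pi/p$ (so that exactly $p$ copies fit around a completed edge); depending on the sign of $(r-2)p-2r$ this forces a hyperbolic, Euclidean, or spherical model. At a vertex with edge-label $k$ the resulting cone angle is $2\pi k/p$, so each unsatisfied vertex carries positive angle deficit $2\pi(p-k)/p$. The two operations of the algorithm---attaching a fresh nanotube along an open face, and identifying nanotubes once $p$ of them appear at an edge---should be compatible with this cone structure, after which Gauss--Bonnet on $\partial \MunivPrinCong{\pqr}{z}(n)$ yields a quantitative relation between its Euler characteristic, total area, and sum of vertex deficits. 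For the hyperbolic signatures $\{4,3,6\}$ and $\{5,3,6\}$ the area lower bound should propagate through the algorithm, since identifications only merge cone points while each new nanotube contributes at least one $r$-gon of fixed positive area; the threshold $\|z\|\geq 6$ should then come out of the initial estimate $\mathrm{Area}(\partial \MunivPrinCong{\pqr}{z}(0))=\mathrm{Area}(T^*_z)$ growing with $\|z\|$.

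The main obstacle will be the Euclidean $\{3,3,6\}$ case and the spherical $\{3,4,4\}$ case, where the sign of curvature is wrong for a direct hyperbolic-area argument. For $\{3,3,6\}$ (flat hexagons at corner angle $2\pi/3$) I would supplement the cone-flat model with a combinatorial count on the hexagonal mesh of $T^*_z$, using $\|z\|\geq 6$ to embed a hexagonal patch large enough that it cannot be consumed by the local identifications of step 2. For $\{3,4,4\}$, spherical-square cone contributions could otherwise shrink the invariant, and the stricter threshold ``$\|z\| \geq 9$ or $a \geq 6$'' should correspond to the two combinatorial regimes---``tilted'' general $z$ versus ``rectangular'' real $z$---that must be handled separately, which I expect is the most delicate step and presumably where the bulk of the case analysis in Section~\ref{sec:proofInfLarge} lies.
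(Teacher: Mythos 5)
Your reduction is the right one: the paper also proves Theorem~\ref{thm:bigMod} by showing that for these $z$ the construction of Definition~\ref{def:UnivRegTessAlgo} never closes up, so that $\partial \MunivPrinCong{\pqr}{z}(n)\neq\emptyset$ for all $n$ and Lemma~\ref{lemma:ConstructionTerminates} gives infinite volume. But the invariant you propose to carry through the induction does not do the work, and the step where it would have to is exactly the one you leave unargued. Step 2 of the algorithm removes open faces from the boundary and can identify nanotubes, and that is precisely how $\partial \MunivPrinCong{\pqr}{z}(n)$ becomes empty in the finite-volume cases; you assert that ``identifications only merge cone points while each new nanotube contributes at least one $r$-gon of fixed positive area,'' but nothing in the cone-metric/Gauss--Bonnet bookkeeping rules out the glue-ups of step 2 consuming the boundary faster than step 1 replenishes it. Moreover, the suggestion that the threshold $\|z\|\geq 6$ ``comes out of the initial estimate $\mathrm{Area}(\partial\MunivPrinCong{\pqr}{z}(0))=\mathrm{Area}(T^*_z)$'' cannot be right: for $z=4+\zeta$ or $z=4+2i$ the cusp torus already has large area, yet the construction terminates. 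The size of $z$ enters the paper's proof not through area but through an injectivity statement: the labeled pattern of $A$-, $B$-, $C$-, $D$-(and for $\{3,4,4\}$, $E$-)faces around the attaching locus must embed in $T^*_z$, i.e.\ not collide with its translates under $\langle z\rangle$ (Lemma~\ref{lemma:noCollision}; for $\{3,4,4\}$ the two explicit non-collision configurations of Figure~\ref{fig:noCollisionSquares} are what produce ``$\|z\|\geq 9$ or $a\geq 6$'' --- this has nothing to do with a spherical-curvature correction or a ``real $z$'' regime, since $a\geq 6$ allows arbitrary $b$).

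With that embedding in hand, the paper replaces the algorithm by a simplified construction that only glues and never identifies, organized around $(p-1)$-clusters, and then runs an induction on combinatorial data your proposal does not track: the evolution of vertex links (which stay subcomplexes of the dual Platonic solid), the edge labels, and the shapes of clusters, concluding at each stage that all clusters are small, no edge acquires more than $p$ nanotubes, and some $C$-face always remains unglued, so the boundary never empties (Lemmas~\ref{lemma:case336}--\ref{lemma:case344}). In your write-up this entire mechanism is deferred (``should propagate,'' ``a combinatorial count on the hexagonal mesh,'' ``embed a hexagonal patch large enough''), and notably the two cases where your metric model has the wrong curvature sign are the tetrahedral case $\{3,3,6\}$ and the octahedral case $\{3,4,4\}$, i.e.\ half of the theorem, including the case with the delicate threshold. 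So while the framing matches the paper, the core of the argument --- why large cusp modulus prevents the identifications and glue-ups from ever emptying the boundary --- is missing, and the proposed area invariant by itself cannot supply it.
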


The rest of this section is devoted to the proof of this theorem which is split into various cases with each case being an immediate consequence of Lemma~\ref{lemma:ConstructionTerminates} and one of Lemmas~\ref{lemma:case336}, \ref{lemma:case436}, \ref{lemma:case536}, and \ref{lemma:case344}. Recall the construction in Definition~\ref{def:UnivRegTessAlgo}.

\begin{figure}[h]
\begin{center}
\scalebox{0.32}{\input{figures_gen/Clusters.tex}}
\end{center}
\caption{\label{fig:clusters}Small $(p-1)$-clusters.}
\end{figure}

\begin{definition}
Split the surface $\partial \MunivPrinCong{\pqr}{z}(n)$ along all edges labeled by $p-2$ or less. We call the resulting connected components $(p-1)$-clusters. We call a $(p-1)$-cluster small if it is one of the clusters shown in Figure~\ref{fig:clusters}.
\end{definition}

We can align a nanotube and a small $(p-1)$-cluster such that each face of the cluster goes to a face of the nanotube and we can attach the nanotube by gluing along those faces. Thus, we can eliminate the need to ever identify nanotubes by simplifying the construction as follows:

\begin{definition}[Simplified Construction]
To obtain $\barMunivPrinCong{\pqr}{z}(n+1)$ from $\MunivPrinCong{\pqr}{z}(n)$:
\begin{enumerate}
\item Attach one new nanotube for each $(p-1)$-cluster along all the faces of the $(p-1)$-cluster simultaneously.
\item If there is an edge $e$ with $p$ nanotubes around $e$ and two open faces adjacent to $e$, we need to glue the two faces. Repeat until there is no edge left for which such a gluing is necessary.
\end{enumerate}
If $\MunivPrinCong{\pqr}{z}(n)$ has a $(p-1)$-cluster that is not small or we encounter an edge $e$ with more than $p$ nanotubes around $e$ or an edge cycle of length different from $p$ during the construction, we say that the construction of $\barMunivPrinCong{\pqr}{z}(n+1)$ failed.
\end{definition}

Unless the simplified construction $\barMunivPrinCong{\pqr}{z}(n+1)$ fails, it is equal to $\MunivPrinCong{\pqr}{z}(n+1)$. In this case, the map $\MunivPrinCong{\pqr}{z}(n)\to\MunivPrinCong{\pqr}{z}(n+1)$ is also an embedding as the construction of $\barMunivPrinCong{\pqr}{z}(n+1)$ only glues but never identifies. Futhermore, the edge-labeled surface $\partial\MunivPrinCong{\pqr}{z}(n)$ determines $\partial\MunivPrinCong{\pqr}{z}(n+1)$ completely in this case.

\begin{figure}[h]
\begin{center}
\scalebox{0.23}{\input{figures_gen/InvertedNanotube.tex}}
\end{center}
\caption{\label{fig:attachingProcess}Graphical construction $\barMunivPrinCong{\{3,3,6\}}{z}(1)$ shown left to right using a stretched nanotube shown on the top left. The evolution of the link of the dotted vertex is shown below. A bold line indicates this is a ``new'' edge.}
\end{figure}

Figure~\ref{fig:attachingProcess} shows a way of visualizing the construction, here of $\barMunivPrinCong{\{3,3,6\}}{z}(1)$. We can take a nanotube and stretch one hexagon to be very large and facing away from us as shown on the top left. Imagine a small piece of the surface $\partial\MunivPrinCong{\{3,3,6\}}{z}(0)$ which consists of hexagons and edges labeled by 1. In step 1, we glue a copy of the stretched nanotube on top of each hexagon. The old edges will now be labeled with 3, hence step 2 will glue them up resulting in the next picture. Note that the faces marked $C$ form a $2$-cluster of 3 hexagons and the faces marked $D$ a $2$-cluster of 2 hexagons. All other edges will be labeled by 1.

\begin{remark} \label{remark:vertexLinkEvolution}
We can also draw pictures to study the evolution of a vertex link as in the bottom of the figure. The {\em evolution of a vertex link} of $\MunivPrinCong{\pqr}{z}(n+1)$ during the construction of $\barMunivPrinCong{\pqr}{z}(n+1)$ is as follows: take the boundary of the vertex link, split it at the points where $p-2$ or fewer $q$-gons meet. Step 1: Attach a new $q$-gon along each component. Step 2: if a point now has $p$ adjacent polygons, glue the two open sides. As we will see later, a vertex link will always be a subcomplex of the polyhedron $\{q,p\}$, the dual of the Platonic solid of the regular tessellation. It eventually closes up to the polyhedron $\{q,p\}$ and thus will disappear from the surface $\partial\MunivPrinCong{\pqr}{z}(n)$ as $n$ grows.
\end{remark}

\subsection{The Cases $\{p,3,6\}$}

\begin{figure}[h]
\begin{center}
\scalebox{0.41}{\input{figures_gen/GluingPattern336.tex}}
\end{center}
\caption{\label{fig:gluingPattern336}\label{fig:labelsOnNanotube}Labels of faces, edges, and vertices of a nanotube's surface.}
\end{figure}

Given a newly attached attached nanotube, we label the hexagons as follows:
\begin{itemize}
\item $A$: faces along which the nanotube was attached.
\item $B$: neighbors of $A$-faces
\item $C$: faces neighboring two $B$-faces
\item $D$: faces neighboring one $B$-face
\item ``other'' faces: remaining faces
\end{itemize}
We label an edge or vertex by $e$, respectively, $v$ and a subscript indicating all classes of faces that are adjacent to the edge or vertex. An edge or vertex that is not adjacent to any $A$- or $B$-face is called ``other'' edge $e_{\mathrm{other}}$, respectively, ``other'' vertex $v_{\mathrm{other}}$. This is shown for a nanotube attached along a $(p-1)$-cluster of 3 hexagons in Figure~\ref{fig:labelsOnNanotube}.

\begin{lemma}\label{lemma:noCollision}
If $\|z\|\geq 6$, the pattern of $A$-, $B$-, $C$-, and $D$-hexagons in Figure~\ref{fig:labelsOnNanotube} and its analogues for smaller $(p-1)$-clusters embeds into the nanotube's surface $T^*_z$.
\end{lemma}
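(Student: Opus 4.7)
The argument will take place in the universal cover of the torus $T_z^*$. Since $T_z^*$ is the hexagonal tessellation $\{6,3\}$ of $\C/\langle z\rangle$, with hexagon centers forming a translate of $\Z[u]\subset\C$ and adjacent centers at unit Euclidean distance, the labeled pattern embeds in $T_z^*$ if and only if any (equivalently, some) lift of it to $\C$ has the property that no two distinct labeled face-centers $c,c'$ satisfy $c-c'\in\langle z\rangle\setminus\{0\}$. The strategy is to bound the combinatorial length $\|\cdot\|$ from below on $\langle z\rangle\setminus\{0\}$ and from above on the set of differences between labeled face-centers of the pattern.

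For the lower bound, I would show that every nonzero $w\in\langle z\rangle$ satisfies $\|w\|\geq\|z\|$. Write $w=\alpha z$ with $\alpha\in\calO_{-3}\setminus\{0\}$. If $\alpha$ is a unit, $\|w\|=\|z\|$ by unit-invariance of $\|\cdot\|$. Otherwise $|\alpha|^2\geq 3$ (the Eisenstein norm form $a^2+ab+b^2$ does not represent $2$), so combining the trivial inequality $\|w\|\geq|w|$ (each unit has Euclidean length $1$) with the norm-ratio bound $|z|\geq\tfrac{\sqrt{3}}{2}\|z\|$ for $z=a+bu$ in canonical form (which follows from $a^2+ab+b^2\geq\tfrac{3}{4}(a+b)^2$, i.e.\ $\tfrac{1}{4}(a-b)^2\geq 0$) yields
\begin{align*}
\|w\|\;\geq\;|w|\;=\;|\alpha|\,|z|\;\geq\;\sqrt{3}\,|z|\;\geq\;\tfrac{3}{2}\|z\|\;>\;\|z\|.
\end{align*}
In particular, the hypothesis $\|z\|\geq 6$ implies that no nonzero element of $\langle z\rangle$ has combinatorial length less than $6$.

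For the upper bound, I would inspect Figure~\ref{fig:labelsOnNanotube} (the $3$-hexagon $(p-1)$-cluster case) and the analogous labelings of the remaining small $(p-1)$-clusters in Figure~\ref{fig:clusters}, and check in each case that the maximum combinatorial distance (i.e.\ graph distance in $\{6,3\}$) between two labeled hexagon centers is at most $5$. Combining the two bounds yields the lemma: for distinct labeled face-centers $c,c'$ in any chosen lift, $\|c-c'\|\leq 5<6\leq\|z\|$, so $c-c'\notin\langle z\rangle$ by the lower bound, and the pattern embeds.

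The main obstacle is the verification of the diameter bound, which is a finite but tedious combinatorial inspection across all the small $(p-1)$-cluster pictures for $p=3,4,5$; if any extremal pair were at distance $6$ the stronger hypothesis $\|z\|\geq 7$ would be required. The algebraic step is routine once one exploits the unit-invariance of $\|\cdot\|$ together with the $\calO_{-3}$-specific norm-ratio inequality.
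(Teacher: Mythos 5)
Your proposal is correct in substance and lives in the same framework as the paper's proof (lift to the universal cover of $T^*_z$ and rule out collisions of the pattern with its translates by elements of $\langle z\rangle\setminus\{0\}$), but the way you verify non-collision is genuinely different. The paper simply asserts $\|t\|\geq 6$ for $t\in\langle z\rangle\setminus\{0\}$, reduces by symmetry to the four canonical translations $6$, $5+\zeta$, $4+2\zeta$, $3+3\zeta$ of combinatorial length $6$, and disposes of these by explicit diagrams (Figure~\ref{fig:336NoCollision} and its unshown companions), with larger $\|t\|$ handled by a one-line remark. You instead prove the lattice lower bound $\|w\|\geq\|z\|$ rigorously (the unit/non-unit dichotomy with $\|w\|\geq|w|$ and $|z|\geq\tfrac{\sqrt3}{2}\|z\|$ is correct) and pair it with an upper bound of $5$ on the combinatorial diameter of the labeled pattern, which removes all case-by-case pictures at once. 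Two remarks. First, the "tedious inspection" you flag as the main obstacle is unnecessary: by definition every $B$-, $C$-, $D$-face lies within combinatorial distance $2$ of the $A$-cluster, and each small $(p-1)$-cluster consists of pairwise adjacent hexagons (one, two, or three around a vertex), hence has diameter at most $1$; so the diameter is at most $2+1+2=5$, and this bound is sharp, so your $\|z\|\geq 6$ threshold is exactly right. Second, your opening "if and only if" should be read at the level of faces, which is what the lemma needs and what the paper's "does not overlap" check establishes: distinctness of the labeled hexagon centers modulo $\langle z\rangle$ does not prevent boundary edges or vertices of the closed pattern from being identified, and for $\|z\|=6$ this actually happens — e.g.\ for $z=3+3\zeta$ the pattern for the three-hexagon cluster has exactly $27=|z|^2$ hexagons, so it fills the whole torus and its boundary is glued to itself. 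Since only the distinctness of the labeled faces is claimed and used downstream, this does not harm your argument, but a strict point-set embedding statement would be false, so the face-center criterion you use is the correct reading.
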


\begin{proof}
\begin{figure}[h]
\scalebox{0.20}{\input{figures_gen/336NoCollision.tex}}
\caption{\label{fig:336NoCollision}The pattern from Figure~\ref{fig:labelsOnNanotube} does not overlap with a copy of it self when translated by $t=6$.}
\end{figure}

We work in the universal covering space which is the complex plane tessellated by honeycombs: we need to show that the pattern does not collide with a copy of itself in the complex plane when translated by an element $t\in \langle z\rangle\setminus 0$. Note that $\|t\|\geq 6$. By symmetry, we ca assume that $t$ is in canonical form. Figure~\ref{fig:336NoCollision} and similar diagrams for $t=5+\zeta$, $4+2\zeta$, $3+3\zeta$ show that there is no collision when $\|t\|=6$. Larger value of $\|t\|$ ensure this as well.
\end{proof}

\newpage
Assuming that $\|z\|\geq 6$ so the lemma holds true, we make the following observations about the construction of $\barMunivPrinCong{\{p,3,6\}}{z}(n+1)$:
\begin{enumerate}[(i)]
\item The following vertices and edges were already in $\MunivPrinCong{\{p,3,6\}}{z}(n)$: $v_A$, $v_{AB}$, $e_{A}$, $e_{AB}$. We thus call them ``old''.
\item During step 1, $A$-faces, $v_{A}$ and $e_{A}$ are always glued up and disappear from the surface $\partial\barMunivPrinCong{\{p,3,6\}}{z}(n+1)$.
\item The label of an edge $e_{AB}$ is two plus the original label of the corresponding edge in $\partial\MunivPrinCong{\{p,3,6\}}{z}(n)$. This label might potentially be $p$ and thus the edge might glue up causing the adjacent $B$-face to be glued up to a $B$-face of another nanotube.
\item This might cause an edge $e_{B}$ to have an edge label greater 1 and to even glue up.
\item An edge $e_{BC}$, respectively, $e_{BD}$ will be labeled with $1$ or $2$ depending on whether the $B$-face was glued.
\item Any ``other'' edge has label 1.
\item This leaves edge $e_{B}$ as only potential edge having more than $p$ nanotubes about it causing the simplified construction of $\barMunivPrinCong{\{p,3,6\}}{z}(n+1)$ to fail.\label{item:failBecauseOfTooManyNanoTubes}
\item As the vertex $v_{AB}$ is ``old'', its vertex link is obtained as described in Remark~\ref{remark:vertexLinkEvolution}.
\item A ``new'' vertex $v_{BC}$ has as vertex link a number of triangles sharing a vertex. This number is the edge label of the adjacent $e_{B}$ edge after step 2. If it is $p$, the vertex link is a cycle of triangles.\label{item:newVertexLink}
\item A ``new'' vertex $v_{BCD}$ has vertex link one or two triangles.
\item Any ``other'' vertex is ``new'' and has vertex link one triangle.
\item If $p=3$, the $B$-faces always glue up. When looking for faces that could potentially form too large $2$-clusters on the surface of $\barMunivPrinCong{\{p,3,6\}}{z}(n+1)$, we only need to consider $C$- and $D$-faces as they are the only unglued faces with edges labeled with 2.\label{item:failBecauseOfTooLargeTwoCluster}
\item If $p>3$, the only edges with new label larger than 2 are $e_{AB}$ and $e_{B}$. Thus we only need to consider $B$-faces as potentially forming too large $p-1$ clusters. \label{item:failBecauseOfTooLargeCluster}
\end{enumerate}


\begin{lemma} \label{lemma:case336}
Let $\|z\|\geq 6$. The following hold for $\MunivPrinCong{\{3,3,6\}}{z}(n)$:
\begin{itemize}
\item $\partial \MunivPrinCong{\{3,3,6\}}{z}(n)$ is non-empty.
\item Every vertex link of $\MunivPrinCong{\{3,3,6\}}{z}(n)$ is a subcomplex of the tetrahedron.
\item Every $2$-cluster is small.
\end{itemize}
Thus, the simplified construction $\barMunivPrinCong{\{3,3,6\}}{z}(n+1)\cong\MunivPrinCong{\{3,3,6\}}{z}(n+1)$ is well-defined.
\end{lemma}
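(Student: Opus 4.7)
The plan is an induction on $n$ that simultaneously proves all three bullet points together with the statement that the simplified construction $\barMunivPrinCong{\{3,3,6\}}{z}(n+1)$ does not fail. The base case $n = 0$ is direct: $\MunivPrinCong{\{3,3,6\}}{z}(0)$ is a single nanotube $\R^{\geq 0}\times T^*_z$ whose boundary torus is non-empty; every hexagon has all six edges labeled $1 \leq p-2$, so splitting disconnects the torus into individual hexagons, each a small $2$-cluster; and every finite vertex link is a single triangle, which is a subcomplex of the tetrahedron.

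For the inductive step, assume the three conclusions at stage $n$. Smallness of $2$-clusters makes step 1 of the simplified construction unambiguous: exactly one fresh nanotube is attached along each cluster. Lemma~\ref{lemma:noCollision}, which is precisely where $\|z\| \geq 6$ enters, ensures that the $A$-$B$-$C$-$D$ decoration of Figure~\ref{fig:labelsOnNanotube} (and of its variants for smaller clusters) embeds without self-overlap into each new nanotube's torus, and that decorations from distinct new nanotubes do not collide after the $B$-face identifications in step 2. Observations (i)--(xiii) therefore apply, and in particular observation (xii) gives that every $B$-face is glued to another $B$-face during step 2, so the only hexagons remaining open on $\partial\MunivPrinCong{\{3,3,6\}}{z}(n+1)$ are of type $C$, $D$, or ``other''.

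To verify that the simplified construction does not fail and that the vertex-link conclusion is preserved, I propagate the second bullet to stage $n+1$. Each ``new'' vertex ($v_{BC}$, $v_{BCD}$, $v_{\mathrm{other}}$) receives a link consisting of a short cycle or chain of at most three triangles (observations (ix)--(xi)), which trivially embeds in the tetrahedron. Each ``old'' vertex's link evolves as in Remark~\ref{remark:vertexLinkEvolution}; since the proper subcomplexes of the tetrahedron are just the empty complex, a single triangle, two triangles sharing an edge, and three triangles around a common vertex, a direct case-by-case check shows that the evolution always lands in another subcomplex of the tetrahedron. Because the tetrahedron has exactly three triangles at each vertex, this simultaneously rules out the two failure modes identified in observation (vii): no $3$D edge acquires more than $p = 3$ adjacent nanotubes, and every closed edge cycle has length exactly $3$.

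Finally, the remaining two bullets at stage $n+1$ follow easily. By observation (vi) every edge of an ``other'' hexagon carries label $1$, so each such hexagon is its own small $2$-cluster; by observation (v) together with the fact that $B$-faces are glued (observation (xii)), each edge of a $C$- or $D$-hexagon carries label at most $2$. Lemma~\ref{lemma:noCollision} then forces every $2$-cluster of $C$- and $D$-faces to match one of the patterns in Figure~\ref{fig:clusters}, hence small. Non-emptiness of $\partial\MunivPrinCong{\{3,3,6\}}{z}(n+1)$ is immediate because each freshly attached nanotube contributes surviving ``other'' hexagons. The hardest part of the argument will be the case check for the evolution of old vertex links in Remark~\ref{remark:vertexLinkEvolution}, together with the verification that neighbouring new nanotubes' decorations interact only as permitted by Lemma~\ref{lemma:noCollision}; everything else is routine bookkeeping with the labels defined by observations (i)--(xiii).
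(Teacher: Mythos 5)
Your induction mirrors the paper's own proof almost step for step: the base case, the use of Lemma~\ref{lemma:noCollision} to make the observations available, the vertex-link evolution showing that every ``old'' vertex closes up inside the tetrahedron (hence no edge ever acquires more than $3$ nanotubes and no edge cycle has length $\neq 3$), and the conclusion that the simplified construction does not fail are all in line with the paper. The genuine gap is in your treatment of the third bullet. Lemma~\ref{lemma:noCollision} cannot ``force every $2$-cluster of $C$- and $D$-faces to match one of the patterns in Figure~\ref{fig:clusters}'': that lemma only asserts that the $A$-, $B$-, $C$-, $D$-pattern embeds without self-overlap into the torus of a \emph{single} newly attached nanotube. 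It says nothing about how the label-$2$ edges of $C$- and $D$-faces belonging to \emph{different} nanotubes fit together after the $B$-faces are glued in step 2, and it is exactly these cross-nanotube adjacencies that assemble the $2$-clusters of $\partial\MunivPrinCong{\{3,3,6\}}{z}(n+1)$; a priori nothing you wrote excludes, say, a long chain of $D$- and $C$-faces joined along label-$2$ edges. The missing argument (the one the paper gives) is short: a $D$-face has exactly one label-$2$ edge, namely $e_{BD}$, so any $2$-cluster with more than two hexagons must contain a $C$-face; and the $2$-cluster of a $C$-face is read off from the link of the new vertex $v_{BC}$ where its two label-$2$ edges $e_{BC}$ meet. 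Because every old vertex closes up to the full tetrahedron, each $e_B$ closes to a $3$-cycle, so by observation~\eqref{item:newVertexLink} the link of $v_{BC}$ is a cycle of exactly three triangles; hence the faces around $v_{BC}$ form a pinwheel of three $C$-faces from three different nanotubes, which is small, and in particular no cluster mixes $C$- and $D$-faces. You already have every ingredient for this (you record the three-triangle cycle at $v_{BC}$), but as written the decisive step is asserted, not proved, and is attributed to the wrong lemma.

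A smaller slip: for non-emptiness you rely on surviving ``other'' hexagons, but for the smallest admissible moduli (e.g.\ $z=3+3\zeta$, with only $27$ hexagons on the torus) it is not obvious that any ``other'' faces exist at all once the $A$-, $B$-, $C$-, $D$-pattern is laid down. The safe argument, and the one the paper uses, is that $C$-faces exist in the embedded pattern and, for $p=3$, only $B$-faces ever glue up in step 2, so a $C$-face remains open on $\partial\barMunivPrinCong{\{3,3,6\}}{z}(n+1)$.
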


\begin{proof} The three properties of the lemma are obviously true for $\MunivPrinCong{\{3,3,6\}}{z}(0)$. Assume they are true for $n$. 

The first property holds for $n+1$ because a $C$-face will be unglued.

\begin{figure}[h]
\begin{center}
\scalebox{0.6}{\input{figures_gen/tetrahedralVertexLinks.tex}}
\end{center}
\caption{\label{fig:tetrahedralVertexLinks}\label{fig:tetrahedralVertexLinkEvolution}Vertex link evolution for $\MunivPrinCong{\{3,3,6\}}{z}(n)$.}
\end{figure}

Figure~\ref{fig:tetrahedralVertexLinkEvolution} shows that the vertex link of any ``old'' vertex becomes a tetrahedron. By observation~\eqref{item:newVertexLink}, the vertex link of a ``new'' vertex is a subcomplex of a tetrahedron as shown on the left column of Figure~\ref{fig:tetrahedralVertexLinkEvolution}. Thus, the second property holds. Since every ``old'' vertex immediately closes up to a tetrahedron, every $e_{B}$ always closes up to a cycle with $3$ nanotubes around it. Thus, by observation~\eqref{item:failBecauseOfTooManyNanoTubes} above, the simplified construction $\barMunivPrinCong{\{3,3,6\}}{z}(n+1)$ is well-defined and equal to $\MunivPrinCong{\{3,3,6\}}{z}(n+1)$.

It is left to show that every $2$ cluster of $\partial \barMunivPrinCong{\{3,3,6\}}{z}(n+1)$ is small. Regarding observation~\eqref{item:failBecauseOfTooLargeTwoCluster}, we only need to look at $C$- and $D$-faces. Since a $D$-face has only one edge with label 2, namely $e_{BD}$, a 2-cluster of more than two hexagons has to contain a $C$-face. Such a $C$-face forms a 2-cluster of three hexagons together with $C$-faces of other nanotubes. To see this, look at the vertex link of $v_{BC}$ which by observation~\eqref{item:newVertexLink} consists of a cycle of 3 triangles.
\end{proof}

\begin{figure}[h]
\begin{center}
\scalebox{0.54}{\input{figures_gen/octahedralVertexLinks.tex}}
\end{center}
\caption{\label{fig:octahedralVertexLinkEvolution} Vertex link evolution for $\MunivPrinCong{\{4,3,6\}}{z}(n)$.}
\end{figure}


\begin{lemma} \label{lemma:case436}
Let $\|z\|\geq 6$. The following hold for $\MunivPrinCong{\{4,3,6\}}{z}(n)$:
\begin{itemize}
\item $\partial\MunivPrinCong{\{4,3,6\}}{z}(n)$ is non-empty.
\item Every vertex link of a vertex in $\partial\MunivPrinCong{\{4,3,6\}}{z}(n)$ is as shown in Figure~\ref{fig:octahedralVertexLinkEvolution}, in particular all edges of $\partial\MunivPrinCong{\{4,3,6\}}{z}(n)$ are labeled by 1 or 3.
\item Every $3$-cluster is small.
\end{itemize}
Thus, the simplified construction $\barMunivPrinCong{\{4,3,6\}}{z}(n+1)\cong\MunivPrinCong{\{4,3,6\}}{z}(n+1)$ is well-defined.
\end{lemma}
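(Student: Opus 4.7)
My plan is to prove all three properties by induction on $n$, paralleling the structure of the proof of Lemma~\ref{lemma:case336}. The base case $n=0$ is immediate: $\partial\MunivPrinCong{\{4,3,6\}}{z}(0)$ is a toroidal hexagonal tessellation with every edge labeled 1 and no interior vertices, and every hexagon is its own small 3-cluster.

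For the inductive step, assuming the three properties hold at stage $n$, I would apply observations (i)--(xiii) from the $\{p,3,6\}$ discussion specialized to $p=4$. Interior edges $e_A$ (labeled 3 by hypothesis) close up since $3+1=4$. Boundary edges $e_{AB}$ (labeled 1 by hypothesis) jump to $1+2=3$, and the freshly created edges $e_{BC}$, $e_{BD}$, $e_\mathrm{other}$ receive label 1. Since $3<p=4$, no $B$-face gluings are triggered in step 2, so the only edges that could exceed the allowed count are the new $e_B$-edges, whose nanotube count equals the number of newly attached nanotubes meeting at the old vertex $v_{AB}$. I would then analyze the vertex-link evolution at $v_{AB}$ as in Remark~\ref{remark:vertexLinkEvolution}: glue one new triangle per new nanotube along each boundary component of the link, then close up any vertex of the link where four triangles now accumulate. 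A short case check through the octahedral subcomplexes shown in Figure~\ref{fig:octahedralVertexLinkEvolution} should confirm that the updated link remains one of these subcomplexes. This simultaneously gives the second property and guarantees that no $e_B$-edge exceeds $p=4$ nanotubes, with its boundary label being $1$ or $3$ (corresponding to $1$ or $3$ triangles meeting at the relevant octahedral link vertex). Observations (ix)--(xi) handle the new vertices $v_{BC}$, $v_{BD}$, $v_\mathrm{other}$, whose links consist of one or two triangles; non-emptiness of $\partial\MunivPrinCong{\{4,3,6\}}{z}(n+1)$ then follows from $C$-faces remaining open, invoking Lemma~\ref{lemma:noCollision} (applicable because $\|z\|\geq 6$) to rule out self-collision.

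The main obstacle will be the third property --- bounding the size of the new 3-clusters. By observation~(xiii), the only open faces adjacent to label-$3$ edges after step 2 are $B$-faces, so every 3-cluster is a union of $B$-faces connected across label-$3$ $e_B$-edges. Each such $e_B$-edge sits at a vertex $v_{AB}$ where the updated octahedral link has exactly three triangles meeting at the corresponding link vertex, and I would enumerate the possible octahedral-subcomplex configurations (those listed in Figure~\ref{fig:octahedralVertexLinkEvolution}) to read off the resulting $B$-face clusters and match them with the small clusters in Figure~\ref{fig:clusters}. That final case check, together with the above, would show the simplified construction does not fail, so $\barMunivPrinCong{\{4,3,6\}}{z}(n+1) \cong \MunivPrinCong{\{4,3,6\}}{z}(n+1)$ and the induction closes.
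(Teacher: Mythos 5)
Your overall strategy is the same as the paper's: induct on $n$, use the hypothesis that all boundary labels are $1$ or $3$ (so the cluster-boundary edges are labeled $1$), conclude that no $B$-face gluing can be triggered, get non-emptiness from an unglued $C$-face (with Lemma~\ref{lemma:noCollision} supplying the embedding of the labeled pattern for $\|z\|\geq 6$), read the second property off the vertex-link evolution in Figure~\ref{fig:octahedralVertexLinkEvolution}, and bound the $3$-clusters by looking at the octahedral links of old vertices. That is exactly how the paper argues.

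However, there is a concrete bookkeeping error in your middle steps that would derail the write-up as stated: you attribute the label-$3$ edges of $\partial\MunivPrinCong{\{4,3,6\}}{z}(n+1)$ to the \emph{new} $e_B$-edges, claim their nanotube count ``equals the number of newly attached nanotubes meeting at the old vertex $v_{AB}$,'' and later say the $3$-clusters are joined across label-$3$ $e_B$-edges. In fact the point of the octahedral case is the opposite: since no edge of $\partial\MunivPrinCong{\{4,3,6\}}{z}(n)$ has label $2$, step 2 performs no gluings at all, so \emph{every} new edge ($e_B$, $e_{BC}$, $e_{BD}$, ``other'') has label exactly $1$ (each lies in a single newly attached nanotube), which is precisely why every new vertex has a single triangle as link (not ``one or two''). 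The only label-$3$ edges of the new boundary are the \emph{old} cluster-boundary edges $e_{AB}$, whose label is $1+2=3$; these are the edges across which $B$-faces of \emph{different} new nanotubes meet, and they have both endpoints at old vertices. Consequently the $3$-cluster analysis must be carried out across the $e_{AB}$-edges around an old vertex $v_{AB}$ — which is what the paper does: a cluster of more than two hexagons must contain a $B$-face meeting two $A$-faces, and the octahedral link of the old vertex it shares with those $A$-faces shows the cluster is exactly the small three-hexagon cluster of Figure~\ref{fig:clusters}. Your final plan (enumerate old-vertex link configurations and read off the $B$-face clusters) is the right one and rescues the argument, but as written the statements about $e_B$-edges (label ``$1$ or $3$,'' count tied to the number of new nanotubes at $v_{AB}$, clusters glued along them) are false and should be replaced by the $e_{AB}$ accounting above.
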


\begin{proof}
The three properties of the lemma are obviously true for $\MunivPrinCong{\{4,3,6\}}{z}(0)$. Assume it is true for $n$.

The first property holds for $n+1$ because a $C$-face will be unglued.

Figure~\ref{fig:octahedralVertexLinkEvolution} shows the evolution of a vertex link, so since any ``old'' vertex had a vertex link shown in the figure for $n$ by assumption, it will have again a vertex link shown in the figure for $n+1$. Since no edge had label 2, no $B$-face will be glued. In particular, the label of edges $e_{B}$, $e_{BC}$, and $e_{BD}$ will be 1, so the vertex link of any ``new'' vertex is just a triangle. Thus, we have proven the second property and also that the simplified construction $\barMunivPrinCong{\{4,3,6\}}{z}(n+1)$ is equal to $\MunivPrinCong{\{4,3,6\}}{z}(n+1)$.

It also follows that $3$-clusters of more than one hexagon can only be formed by $B$-faces. $3$-clusters of more than two hexagons have to contain a $B$-face touching two $A$-faces. Looking at the vertex links of the vertex that the $B$-face shares with the two $A$-faces, we see that we get a $3$-cluster of three hexagons.
\end{proof}

\begin{lemma} \label{lemma:case536}
Let $\|z\|\geq 6$. The following hold for $\MunivPrinCong{\{5,3,6\}}{z}(n)$:
\begin{itemize}
\item $\partial\MunivPrinCong{\{5,3,6\}}{z}(n)$ is non-empty.
\item Every vertex link of a vertex in $\partial\MunivPrinCong{\{5,3,6\}}{z}(n)$ is as shown in Figure~\ref{fig:icosahedralVertexLinkEvolution}.
\item Every $4$-cluster is small.
\end{itemize}
Thus, the simplified construction $\barMunivPrinCong{\{5,3,6\}}{z}(n+1)\cong\MunivPrinCong{\{5,3,6\}}{z}(n+1)$ is well-defined.
\end{lemma}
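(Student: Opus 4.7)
The plan is to mimic the inductive structure already executed for the tetrahedral and cubical cases in Lemmas~\ref{lemma:case336} and \ref{lemma:case436}, with the icosahedron $\{3,5\}$ playing the role of the ambient polyhedron into which vertex links must embed (since $\{3,5\}$ is the dual of the dodecahedron $\{5,3\}$). The base case $n=0$ is immediate: $\MunivPrinCong{\{5,3,6\}}{z}(0)$ is a single nanotube, its boundary is $T^*_z$ tessellated by pentagons with every edge labeled $1$, every vertex link is a single triangle, and, since $p-1=4$, no $4$-cluster contains more than one pentagon. So assume all three properties hold at stage $n$, and apply Lemma~\ref{lemma:noCollision} (valid since $\|z\|\geq 6$) so that the $A,B,C,D$-labeling of each newly attached nanotube embeds into its surface; observations (i)--(xiii) then apply verbatim with $p=5$. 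Non-emptiness of $\partial\MunivPrinCong{\{5,3,6\}}{z}(n+1)$ follows because, for any attached nanotube, at least one ``other'' pentagon remains unglued (its incident edges all carry label $1$ by observation (vi)).

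For the vertex-link evolution, treat ``old'' and ``new'' vertices separately. For an ``old'' vertex the link evolves according to Remark~\ref{remark:vertexLinkEvolution}, and by the inductive hypothesis it is one of the subcomplexes of the icosahedron depicted in Figure~\ref{fig:icosahedralVertexLinkEvolution}; a case-by-case check modeled on Figures~\ref{fig:tetrahedralVertexLinkEvolution} and~\ref{fig:octahedralVertexLinkEvolution} shows the next link is again on that list, and in particular never overshoots the icosahedron, which rules out observation (vii) failing. For a ``new'' vertex ($v_{BC}$, $v_{BCD}$, or $v_{\mathrm{other}}$), observations (ix)--(xi) together with the label control just established show the link is just one, two, or three triangles sharing a vertex, again a subcomplex of $\{3,5\}$.

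The cluster condition is the main obstacle, because $p=5$ allows edge labels up to $4$ and hence intrinsically richer clusters than the $p=3,4$ cases. By observation (xiii) only $B$-faces can form a $4$-cluster of more than one pentagon, since $A$-faces disappear in step 1 and every other face has an incident edge of label $\leq 3$. Two $B$-faces are joined across an $e_{AB}$-edge that has just closed up into a $5$-cycle, or across an $e_B$-edge belonging to the same nanotube; in either situation, reading off the vertex link of the shared vertex, which by the second bullet is one of the listed subcomplexes of the icosahedron, identifies the resulting cluster as one of those already enumerated as small in Figure~\ref{fig:clusters}. The delicate point I anticipate is verifying that no ``chain'' of $B$-faces can propagate around a still-incomplete icosahedral vertex link before the link closes up; this is handled as in Lemma~\ref{lemma:case436}, using the fact that each closing of an $e_{AB}$-edge simultaneously fills in a triangle of the surrounding vertex link, so the chain length is bounded by the missing-triangle count, which by the second bullet remains strictly less than the full icosahedron. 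All three inductive conclusions then follow, and in particular the simplified construction $\barMunivPrinCong{\{5,3,6\}}{z}(n+1)$ never fails and agrees with $\MunivPrinCong{\{5,3,6\}}{z}(n+1)$.
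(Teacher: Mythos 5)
Your overall framework (induction on $n$, base case a single nanotube, Lemma~\ref{lemma:noCollision} to embed the $A$-, $B$-, $C$-, $D$-pattern, observations (i)--(xiii) with $p=5$, non-emptiness via a face that cannot glue up) matches the paper, and those parts are fine. The genuine gap is in the third bullet, which is exactly where the $\{5,3,6\}$ case differs from the two cases you propose to imitate. Unlike the tetrahedral case, an ``old'' vertex link here does \emph{not} close up in one step, so ``new'' edges $e_B$ can persist with intermediate labels ($2$, then later $4$) across several iterations. Your claim that a $4$-cluster joined across an $e_B$-edge can be identified as small by ``reading off the vertex link of the shared vertex'' is not a proof: the shared vertex is itself a \emph{new} vertex at stage $n+1$, its link is not yet constrained to force smallness, and the shape of the cluster depends on how the surrounding $B$-faces of \emph{several different} nanotubes glue to one another in step 2, which a single vertex link does not determine. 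Deferring this to ``as in Lemma~\ref{lemma:case436}'' does not work either, because in the cubical case all edge labels are $1$ or $3$, no $B$-face is ever glued, and there is simply no analogue of an $e_B$-edge reaching label $p-1$; likewise your ``missing-triangle count'' bound on chain length does not show the cluster is one of the specific small clusters of Figure~\ref{fig:clusters}.

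What is actually needed (and what the paper does) is a bespoke local analysis of the unique configuration in which a new $e_B$-edge attains label $4$: one identifies the single vertex link in Figure~\ref{fig:icosahedralVertexLinkEvolution} where this occurs, reconstructs what $\partial\MunivPrinCong{\{5,3,6\}}{z}(n)$ looked like near that vertex and its two neighbors (the two edges previously labeled $2$ were new $e_B$-edges, so by observation about new vertex links their endpoints carry two-triangle links), and then explicitly re-runs the attaching construction on this local picture (Figure~\ref{fig:icosahedralVertexLinkEvolutionSpecialCase}), verifying that six of the $B$-faces glue up in step 2 and the remaining three $B$-faces around the central vertex form a small $4$-cluster. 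The complementary case, where only ``old'' $e_{AB}$-edges of a $B$-face carry label $4$, is read off from the old-vertex links in Figure~\ref{fig:icosahedralVertexLinkEvolution}. Without this explicit two-case analysis your induction does not close, so as written the proposal has a real hole at its central step. (A minor additional inaccuracy: new vertex links can consist of more than three triangles, namely one of the five configurations in the figure, not just ``one, two, or three''.)
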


\begin{figure}[h]
\begin{center}
\scalebox{0.39}{\input{figures_gen/icosahedralVertexLink.tex}}
\end{center}
\caption{\label{fig:icosahedralVertexLinkEvolution} Vertex link evolution for $\MunivPrinCong{\{5,3,6\}}{z}(n)$.}
\end{figure}

\begin{proof}
The three properties of the lemma are obviously true for $\MunivPrinCong{\{5,3,6\}}{z}(0)$. Assume it is true for $n$.

The first property holds for $n+1$ because a $C$-face will be unglued.

Figure~\ref{fig:icosahedralVertexLinkEvolution} shows the evolution of a vertex link. An ``old'' vertex had one such vertex link for $n$, so it will have the next vertex link in the figure for $n+1$. By observation~\eqref{item:newVertexLink}, a ``new'' vertex will have one of the five vertex links on the left of the figure. So, the second property holds and we also never run into a case of an edge having more than $5$ nanotubes about it, so $\barMunivPrinCong{\{5,3,6\}}{z}(n+1)$ is well-defined.

It remains to show that every $4$-cluster is small. By observation~\eqref{item:failBecauseOfTooLargeCluster}, it is enough to look at the unglued $B$-faces. The unglued $B$-faces are those faces of $\partial\barMunivPrinCong{\{5,3,6\}}{z}(n+1)$ that are adjacent to some ``old'' vertex $v_{AB}$. Figure~\ref{fig:icosahedralVertexLinkEvolution} shows what $\partial\barMunivPrinCong{\{5,3,6\}}{z}(n+1)$ looks like near an ``old'' vertex. The ``new'' edges $e_{B}$ are marked in bold to distinguish them from the ``old'' edges $e_{AB}$.

We need to show that for each unglued $B$-face, the cluster we obtain by following all its edges with label 4 is small. Recall that a $B$-face can potentially have up to four edges with label 4: two ``new'' edges $e_B$ and up to two ``old'' edges $e_{AB}$ meeting at an ``old'' vertex.

First, we consider only $B$-faces where an ``old'' edge is labeled by 4 but no ``new'' edge (bold) is labeled by 4, so a 4-cluster has to form around an ``old'' vertex. As we see in Figure~\ref{fig:icosahedralVertexLinkEvolution}, these clusters are all small.

Now, we consider the case where a ``new'' $e_B$ edge has label 4. We find exactly one vertex link (third row, fourth column) where this is the case. The diagram left to it shows what $\partial\MunivPrinCong{\{5,3,6\}}{z}(n)$ looks near that vertex. When $\partial\MunivPrinCong{\{5,3,6\}}{z}(n)$ was constructed, the two edges labeled with 2 were ``new'' edges $e_B$, hence their other end points were ``new'' vertices, so by observation~\eqref{item:newVertexLink}, we know that their vertex link consists of two triangles (new vertex in the second row of Figure~\ref{fig:icosahedralVertexLinkEvolution}). Thus, we can draw $\partial\MunivPrinCong{\{5,3,6\}}{z}(n)$ near these three vertices and obtain the diagram on the left of Figure~\ref{fig:icosahedralVertexLinkEvolutionSpecialCase}. Similarly to Figure~\ref{fig:attachingProcess}, we can perform the construction of $\barMunivPrinCong{\{5,3,6\}}{z}(n+1)$, this time stretching one of nanotubes such that two $A$-faces are facing away from us. We see that in step 2, six of the $B$-faces glue up and the remaining three $B$-faces touching the central vertex form a small $4$-cluster.

\begin{figure}[h]
\begin{center}
\scalebox{0.26}{\input{figures_gen/icosahedralVertexLinkSpecialCase.tex}}
\end{center}
\caption{\label{fig:icosahedralVertexLinkEvolutionSpecialCase}Construction of $\partial\barMunivPrinCong{\{5,3,6\}}{z}(n+1)$ for special case.}
\end{figure}

\end{proof}

\subsection{The Case $\{3,4,4\}$}

\begin{lemma}
Let $\|z\|\geq 9$ or $z=a+bi$ with $a\geq 6$. The following hold for $\MunivPrinCong{\{3,4,4\}}{z}(n)$:
\begin{itemize}
\item $\partial\MunivPrinCong{\{3,4,4\}}{z}(n)$ is non-empty.
\item Every vertex of $\partial\MunivPrinCong{\{3,4,4\}}{z}(n)$ has one of the vertex links shown in Figure~\ref{fig:cubicalVertexLinkEvolution}.
\item Every $2$-cluster is small. Furthermore, most vertices of a $2$-cluster have trivial vertex link as indicated in Figure~\ref{fig:large344Patterns}.
\end{itemize}
Thus, the simplified construction $\barMunivPrinCong{\{3,4,4\}}{z}(n)\cong \MunivPrinCong{\{3,4,4\}}{z}(n)$ is well-defined.
\label{lemma:case344}
\end{lemma}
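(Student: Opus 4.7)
The plan is to proceed by induction on $n$, following the template of Lemmas~\ref{lemma:case336}, \ref{lemma:case436}, and \ref{lemma:case536}. The base case $n=0$ is immediate: $\MunivPrinCong{\{3,4,4\}}{z}(0)$ is a single nanotube whose boundary torus has all edges labeled $1$, all vertex links equal to a single square, and only trivial singleton $2$-clusters.

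The first preparatory step is a no-collision lemma analogous to Lemma~\ref{lemma:noCollision}, showing that under the hypothesis $\|z\|\geq 9$ or $a\geq 6$ the pattern of $A$-, $B$-, $C$-, and $D$-faces (together with the part of the neighborhood needed to read off vertex links) embeds without collision into the square-tessellated surface $T^*_z$. The threshold $9$ instead of $6$ reflects that the labeled pattern is longer on a square grid than on a honeycomb grid, and the alternative bound $a\geq 6$ is needed because along the real axis the pattern extends further than in a generic direction, so the hypothesis $\|z\|\geq 9$ alone does not cover that case. The verification is routine but tedious: enumerate the finitely many short translations $t\in\langle z\rangle\setminus 0$ in canonical form with $\|t\|<9$ and $\mathrm{Re}(t)<6$, and check case-by-case, as in Figure~\ref{fig:336NoCollision}, that no overlap occurs.

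Given the no-collision lemma, the induction step mirrors Lemmas~\ref{lemma:case336}--\ref{lemma:case536}. Since $p=3$, observation~\eqref{item:failBecauseOfTooManyNanoTubes} forces every $B$-face to glue up, so all new edges carry label $1$ or $2$ only. I would then show that the cubical vertex link at an ``old'' vertex $v_{AB}$ evolves through the sequence in Figure~\ref{fig:cubicalVertexLinkEvolution} until it closes to a full cube and disappears, while every ``new'' vertex has link consisting of one or two squares and is still a subcomplex of the cube. This simultaneously rules out edges with more than $3$ nanotubes around them and edge cycles of length different from $3$, so $\barMunivPrinCong{\{3,4,4\}}{z}(n+1)$ does not fail and equals $\MunivPrinCong{\{3,4,4\}}{z}(n+1)$. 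Non-emptiness of $\partial\MunivPrinCong{\{3,4,4\}}{z}(n+1)$ follows because at least one $C$-face remains unglued.

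I expect the main obstacle to be the $2$-cluster analysis. Unlike the $\{4,3,6\}$ and $\{5,3,6\}$ cases, here $p=3$ forces label-$2$ edges to appear, and because $q=4$ the $2$-clusters can extend along chains of $C$- and $D$-faces longer than in the $\{3,3,6\}$ case. My plan is to strengthen the induction hypothesis with the claim recorded in the lemma and pictured in Figure~\ref{fig:large344Patterns} that most vertices of any $2$-cluster have trivial vertex link; this prevents a growing $2$-cluster from ``wrapping around'' a partially built cubical vertex link. I would then classify the $2$-clusters of $\partial\MunivPrinCong{\{3,4,4\}}{z}(n+1)$ by starting from each unglued $C$- or $D$-face, following its label-$2$ edges, and using the vertex-link invariant together with the no-collision lemma to force the cluster to match one of the small patterns of Figure~\ref{fig:clusters}. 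This is where the stronger hypothesis is really used: along the real axis a naive cluster could extend further than along a generic direction, and the bound $a\geq 6$ is exactly what keeps it small.
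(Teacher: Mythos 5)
Your overall strategy is the paper's: induct on $n$, keep the strengthened hypothesis that most vertices of a $2$-cluster have trivial vertex link, and reduce everything to a finite non-collision check on the cusp torus. But the non-collision step as you set it up is backwards. You propose to enumerate the translations $t\in\langle z\rangle\setminus 0$ with $\|t\|<9$ and $\mathrm{Re}(t)<6$ and ``check that no overlap occurs'' --- those are precisely the translations \emph{excluded} by the hypothesis, and for genuinely short translations overlap does occur (that is why small cusp moduli force identifications). What must be verified is the opposite: that the labeled patterns attached along each type of small $2$-cluster (the paper's Figure~\ref{fig:FacePattern334}, with the one-square cluster split into two sub-cases according to whether three or four of its vertices have trivial link) do not collide under any translation that \emph{can} arise, namely translations of real part at least $6$ and the borderline length-$9$ translations such as $5+4i$ together with its shifts by multiples of $i-1$; this is exactly the content of Figure~\ref{fig:noCollisionSquares}, and it is where ``$a\geq 6$ or $\|z\|\geq 9$'' enters. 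Relatedly, your reading of the hypothesis is inverted: $a\geq 6$ is not an extra demand needed because the pattern ``extends further along the real axis''; it is a relaxation (admitting $z$ of combinatorial length as small as $6$) because the pattern needs \emph{less} room in the horizontal direction than in the diagonal one.

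The second gap is the bookkeeping specific to the square cusp cross-section. The observations you cite from the $\{p,3,6\}$ analysis (e.g.\ that for $p=3$ the $B$-faces always glue up) were derived for the hexagonal pattern and do not transfer verbatim: on the $\{4,4\}$ torus, faces meet the attaching $2$-cluster not only along edges but also in single vertices, and such a face is forced to glue as soon as that vertex's link closes up. The paper deals with this by redefining the $B$-class to contain every face sharing with an $A$-face a vertex whose link was non-trivial at stage $n$, by introducing a further class $E$ of faces meeting a $B$-face only in a vertex, and by showing that every non-trivial vertex link closes up to a cube in a single step, so that \emph{all} $B$-faces glue and $\partial\barMunivPrinCong{\{3,4,4\}}{z}(n+1)$ consists only of $C$-, $D$-, $E$- and ``other'' faces; the label-$2$ edges and the non-trivially-linked vertices can then be read off to yield exactly the three neighborhoods of Figure~\ref{fig:large344Patterns} ($C$-faces forming the four-square cluster around an ``old'' vertex, $D$-faces pairing up, $E$- and ``other'' faces isolated). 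Your hedge ``together with the part of the neighborhood needed to read off vertex links'' does not supply this mechanism, and without it the induction step --- in particular the claim that new vertices carry only the links of Figure~\ref{fig:cubicalVertexLinkEvolution} and that all $2$-clusters are small with the stated trivial-link vertices --- does not go through.
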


\begin{figure}[h]
\begin{center}
\scalebox{0.8}{\input{figures_gen/cubicalVertexLinks.tex}}
\end{center}
\caption{\label{fig:cubicalVertexLinkEvolution} Vertex link evolution for $\MunivPrinCong{\{3,4,4\}}{z}(n)$.}
\end{figure}

\begin{figure}[h]
\begin{center}
\scalebox{1.1}{\input{figures_gen/large344Patterns.tex}}
\end{center}
\caption{\label{fig:large344Patterns}Neighborhoods of $2$-clusters.}
\end{figure}

\begin{proof}
The claims in the lemma are obviously true for $n=0$. Assume they are true for $n$. Similar to Figure~\ref{fig:attachingProcess}, we can graphically construct $\partial\barMunivPrinCong{\{3,4,4\}}{z}(n+1)$ as shown in Figure~\ref{fig:344Pattern1Attach}. 

\begin{figure}[h]
\begin{center}
\scalebox{0.125}{\input{figures_gen/InvertedSquareNanotube.tex}}
\end{center}
\caption{\label{fig:344Pattern1Attach}Graphical construction of $\partial\barMunivPrinCong{\{3,4,4\}}{z}(n+1)$.}
\end{figure}

We label the faces of a newly attached nanotube:
\begin{itemize}
\item $A$: faces along which the nanotube was attached.
\item $B$: a face that shares an edge with $A$ or a vertex with $A$ that had a non-trivial vertex link in $\partial\MunivPrinCong{\{3,4,4\}}{z}(n)$.
\item $C$: faces neighboring two $B$-faces
\item $D$: faces neighboring one $B$-face
\item $E$: faces touching a $B$-face in a vertex.
\item ``other'' faces: remaining faces
\end{itemize}

We call a vertex touching an $A$-face ``old'' and all other vertices ``new''. Assume that the pattern of $A$-, $B$-, $C$-, $D$-, and $E$-faces embeds into the surface of a  nanotube. Then the vertex links evolve as shown in Figure~\ref{fig:cubicalVertexLinkEvolution}. In particular, every non-trivial vertex link closes up to a cube and every $B$-face glues up. The new $\partial\barMunivPrinCong{\{3,4,4\}}{z}(n+1)$ thus consists only of $C$-, $D$-, $E$-, and ``other'' faces. An edge of $\partial\barMunivPrinCong{\{3,4,4\}}{z}(n+1)$ has label 2 if and only if it touched a $B$-face. Similarly, a ``new'' vertex has non-trivial vertex link if and only if it is adjacent to a $B$-face. A $C$-face thus has two edges with label 2. Four $C$-faces form a 2-cluster around an ``old'' vertex. The vertex of $C$ opposite to the ``old'' vertex touches no $B$-face and will have trivial vertex link. Thus we get the pattern on the right of Figure~\ref{fig:large344Patterns}. Similarly, a $D$-face has exactly one edge with label 2 and thus forms a cluster with exactly one other $D$-face. The two vertices where these two $D$-faces meet have non-trivial vertex link but all other vertices of the $D$-face have trivial vertex link so we get the pattern in the middle of Figure~\ref{fig:large344Patterns}. An $E$-face has no edge with label 2. Only one of its vertices touches a $B$-face, so three of its vertices have trivial vertex link. Every ``other'' face forms a 2-cluster by itself with all vertex links trivial. Thus we get the pattern on the left in Figure~\ref{fig:large344Patterns} for $E$- and ``other'' faces.

It remains to show that the pattern of $A$-, $B$-, $C$-, $D$-, and $E$-faces embeds into the surface of a nanotube for each 2-cluster as shown in Figure~\ref{fig:large344Patterns}. These patterns are shown in Figure~\ref{fig:FacePattern334}. We distinguish between two cases for 2-clusters of one square depending on whether three or four vertices of the square have trivial vertex link.

\begin{figure}[h]
\begin{center}
\scalebox{0.3}{\input{figures_gen/344FacePattern.tex}}
\end{center}
\caption{\label{fig:FacePattern334}Labels of faces of a nanotubes attached along a $2$-cluster.}
\end{figure}

\begin{figure}[h]
\begin{center}
\scalebox{0.24}{\input{figures_gen/noCollisionSquares.tex}}
\end{center}
\caption{\label{fig:noCollisionSquares}No collision of patterns for $z=6$ and $z=5+4i$.}
\end{figure}

Similarly to the proof of Lemma~\ref{lemma:noCollision}, it is enough to show that two copies of these patterns do not collide when translated by $t\in\langle z\rangle\setminus 0$. The left configuration in Figure~\ref{fig:noCollisionSquares} shows that they do not collide when translated by $z=a+bi$ with $a\geq 6$. The right configuration shows that they do not collide when translated $z=5+4i$ and that we can furthermore move one copy by an integral multiple of $i-1$ without collision. Thus $a\geq 6$ or $\|z\| \geq 9$ suffices.
\end{proof}

\section{Cuspidal Covers and Homology} \label{sec:cuspidalCovers}

\begin{definition}\label{def:cuspidalCover}
A cuspidal covering space $\tilde{\Mmfd}\to\Mmfd$ of a cusped 3-manifold $\Mmfd$ is a covering space such that each cusp has a neighborhood $\mathcal{N}$ whose preimage consists of disjoint copies of $\mathcal{N}$.
\end{definition}

We can extend the universal property of $\MunivPrinCong{\pqr}{z}$ to the larger class of tessellations with well-defined cusp modulus: Let $\Mmfd$ be a (not necessarily regular) tessellation by ideal Platonic solids. We say that $\Mmfd$ has {\bf well-defined cusp modulus $z$} if the tessellation induced on each cusp is a regular tessellation $T_z$ and characterized by the same $z$ (up to a unit) for every cusp. Note that $\tetGroup{\pqr}$ acts transitively on the ideal vertices of the tessellation $\pqr$ and thus can take any cusp to the cusp corresponding to $\infty$. Thus, we can also define it algebraically:

\begin{definition}
Let $\Mmfd$ be a cusped orientable 3-manifold such that $\Mmfd=\H^3/\mfd$ where $M$ is a torsion-free subgroup of $\tetGroup{\pqr}$. Then $\Mmfd$ has well-defined cusp modulus $z$ if $(gMg^{-1})\cap P=P_z$ for every $g\in\tetGroup{\pqr}$.
\end{definition}

\begin{lemma}
Let $\Mmfd$ be a tessellation with well-defined cusp modulus $z$ and $\tilde{\Mmfd}$ be a cuspidal covering space. Then the tessellation $\tilde{\Mmfd}$ has well-defined cusp modulus $z$.
\end{lemma}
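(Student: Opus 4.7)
The plan is to reformulate the cuspidal hypothesis as equality of cusp stabilizers and then invoke the algebraic description of well-defined cusp modulus. Writing $\tilde{\Mmfd}=\H^3/\tilde{\mfd}$ with $\tilde{\mfd}\subseteq \mfd$, the inclusion $g\tilde{\mfd}g^{-1}\cap P\subseteq g\mfd g^{-1}\cap P = P_z$ is automatic for every $g\in\tetGroup{\pqr}$, so the real content is the reverse inclusion.

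First I would pin down cusp stabilizers. Since $\tetGroup{\pqr}$ acts transitively on ideal vertices, every such vertex has the form $v=g^{-1}(\infty)$ for some $g\in\tetGroup{\pqr}$. The hypothesis puts $g^{-1}p_z g$ into $\mfd$, making $v$ a parabolic fixed point of $\mfd$; because $\mfd$ is discrete and torsion-free, $\mathrm{Stab}_{\mfd}(v)$ contains no elliptic elements and (by the standard argument that a parabolic and a loxodromic sharing a fixed point generate an indiscrete group) no loxodromic elements either, so it is purely parabolic. Hence
$$\mathrm{Stab}_{\mfd}(v)=\mfd\cap g^{-1}Pg = g^{-1}(g\mfd g^{-1}\cap P)g = g^{-1}P_z g.$$
The same reasoning applied to $\tilde{\mfd}$ (and using that $\mathrm{Stab}_{\tilde{\mfd}}(v)\subseteq\mathrm{Stab}_{\mfd}(v)$ is automatically purely parabolic) identifies $\mathrm{Stab}_{\tilde{\mfd}}(v)=\tilde{\mfd}\cap g^{-1}Pg$.

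Finally I would extract stabilizer equality from the cuspidal hypothesis. Choose a horoball $B$ at $v$ small enough that its $\mfd$-translates are pairwise disjoint except under $\mathrm{Stab}_{\mfd}(v)$. Its image in $\Mmfd$ is the cusp neighborhood $\mathcal{N}=B/\mathrm{Stab}_{\mfd}(v)$; the component of its preimage in $\tilde{\Mmfd}$ containing the projection of $v$ is $B/\mathrm{Stab}_{\tilde{\mfd}}(v)$, and covers $\mathcal{N}$ with degree $[\mathrm{Stab}_{\mfd}(v):\mathrm{Stab}_{\tilde{\mfd}}(v)]$. The cuspidal hypothesis forces this degree to be $1$ for every cusp, so $\mathrm{Stab}_{\tilde{\mfd}}(v)=\mathrm{Stab}_{\mfd}(v)$ for every ideal vertex $v$. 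Combining with the previous paragraph yields $\tilde{\mfd}\cap g^{-1}Pg = g^{-1}P_z g$, equivalently $g\tilde{\mfd}g^{-1}\cap P = P_z$ for every $g\in\tetGroup{\pqr}$, which is exactly the condition that $\tilde{\Mmfd}$ has well-defined cusp modulus $z$. The only mildly delicate point is this last step: translating the topological cuspidal condition (the preimage of a cusp neighborhood is a disjoint union of copies of it) into the algebraic stabilizer equality, but this is a standard covering-space computation once $B$ is chosen small enough, and it is the only place where the cuspidal hypothesis is actually used.
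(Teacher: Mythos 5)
Your proof is correct, and it is essentially the intended argument: the paper dismisses this lemma with ``follows from the definitions,'' and your identification of cusp stabilizers with conjugates of $P_z$ plus the horoball computation showing the cuspidal hypothesis forces $[\mathrm{Stab}_{\mfd}(v):\mathrm{Stab}_{\tilde{\mfd}}(v)]=1$ is exactly the standard unpacking of those definitions. The only caveat is interpretive rather than mathematical: you correctly read ``preimage consists of disjoint copies of $\mathcal{N}$'' as meaning each component maps homeomorphically (degree $1$), which is how the paper uses cuspidal covers elsewhere.
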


\begin{proof}
Follows from the definitions.
\end{proof}

\begin{lemma}
Let $\Mmfd$ be a tessellation of type $\pqr$ with well-defined cusp modulus $z$. Then there is a cuspidal covering map $\MunivPrinCong{\pqr}{z}\to\Mmfd$. 
\end{lemma}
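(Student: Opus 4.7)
The plan is to verify the subgroup containment $\univPrinCong{\pqr}{z}\subseteq \mfd$ inside $\tetGroup{\pqr}$, which gives the covering map, and then to check the cuspidality condition by comparing parabolic stabilizers at each ideal vertex.

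For the first step, I would unpack the well-defined cusp modulus hypothesis: $(g\mfd g^{-1})\cap P=P_z$ for every $g\in\tetGroup{\pqr}$. In particular, $p_z\in g\mfd g^{-1}$, which is the statement $g^{-1}p_z g\in\mfd$. As $g$ ranges over $\tetGroup{\pqr}$, so does $g^{-1}$, so every $\tetGroup{\pqr}$-conjugate of $p_z$ lies in $\mfd$. Because $\univPrinCong{\pqr}{z}$ is by Definition~\ref{def:univRegTess} the smallest normal subgroup of $\tetGroup{\pqr}$ containing $p_z$, it is generated by precisely these conjugates and so sits inside $\mfd$. The quotient $\H^3/\univPrinCong{\pqr}{z}\to\H^3/\mfd$ is then the desired covering map $\MunivPrinCong{\pqr}{z}\to\Mmfd$.

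For the second step, I would invoke the fact that $\tetGroup{\pqr}$ acts transitively on the ideal vertices of the tessellation $\pqr$, so every parabolic fixed point of $\mfd$ (or of $\univPrinCong{\pqr}{z}$) has the form $g\cdot\infty$ for some $g\in\tetGroup{\pqr}$. The parabolic subgroup of $\mfd$ fixing $g\cdot\infty$ is
$$\mfd\cap gPg^{-1}=g\bigl((g^{-1}\mfd g)\cap P\bigr)g^{-1}=gP_zg^{-1},$$
using the well-defined cusp modulus hypothesis at the element $g^{-1}$. The same formula holds for $\univPrinCong{\pqr}{z}$ in place of $\mfd$: since $\univPrinCong{\pqr}{z}$ is normal in $\tetGroup{\pqr}$ and contains $p_z$, an argument identical to the one establishing the cusp modulus of $\MunivPrinCong{\pqr}{z}$ earlier in Section~\ref{sec:regTessSolids} shows $\univPrinCong{\pqr}{z}\cap P=P_z$, and then conjugating gives the parabolic subgroup at $g\cdot\infty$ as $gP_zg^{-1}$ as well.

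Since the parabolic stabilizers of $\mfd$ and $\univPrinCong{\pqr}{z}$ coincide at every ideal vertex, a sufficiently small horoball cusp neighborhood $\mathcal{N}$ of each cusp of $\Mmfd$ pulls back to a disjoint union of copies of $\mathcal{N}$ in $\MunivPrinCong{\pqr}{z}$, which is exactly the condition of Definition~\ref{def:cuspidalCover}. I do not expect a serious obstacle in this argument; the only subtle point is that the hypothesis is the \emph{conjugation-invariant} strengthening of the cusp modulus condition, and this invariance is precisely what feeds both the containment of normal closures in the first step and the equality of parabolic stabilizers in the second.
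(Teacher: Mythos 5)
Your proposal is correct and takes essentially the same route as the paper: the paper's entire proof is your first step (every conjugate $gp_zg^{-1}$ lies in $\mfd$ by the well-defined cusp modulus hypothesis, so the normal closure $\univPrinCong{\pqr}{z}$ is contained in $\mfd$), with the cuspidality of the covering left implicit. Your stabilizer comparison just supplies that omitted detail; note only that the cleanest justification of $\univPrinCong{\pqr}{z}\cap P=P_z$ is the sandwich $P_z\subseteq\univPrinCong{\pqr}{z}\cap P\subseteq\mfd\cap P=P_z$ (using the containment from step one), since the ideal argument from Section~\ref{sec:regTessSolids} alone only shows the intersection is an ideal containing $\langle z\rangle$.
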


\begin{proof}
As a group, $\univPrinCong{\pqr}{z}$ is generated by all elements $gp_z g^{-1}$, thus it must be contained in $\mfd$.
\end{proof}

For the rest of this chapter, we will use $\Mmfd$ interchangeably to denote a cusped hyperbolic manifold as well as the manifold with boundary whose interior is the cusped hyperbolic manifold. Let $i:\partial\Mmfd\to\Mmfd$ be the inclusion of the boundary. Let $H_1^{cusp}(\Mmfd)=H_1(\Mmfd)/\mathrm{Im}(i_*)$.

\begin{lemma}
Connected cuspidal Abelian covering spaces $\tilde{\Mmfd}\to\Mmfd$ correspond to epimorphisms $\rho: H_1^{cusp}(\Mmfd)\to G$. The number of sheets of the covering space is $|G|$.
\end{lemma}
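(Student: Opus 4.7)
The plan is to reduce to standard covering space theory and then translate the cuspidal condition into a homological factorization.

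First, I would invoke the standard classification of connected regular covers: connected regular covering spaces $\tilde{\Mmfd}\to\Mmfd$ with deck group $G$ are in bijection with epimorphisms $\pi_1(\Mmfd)\to G$ up to isomorphism of $G$, and the number of sheets equals the index of the kernel, hence $|G|$ when $G$ is finite (and is $|G|$ in general when interpreted as the cardinality of the fiber). Restricting to the Abelian case, $G$ is Abelian, so such an epimorphism factors uniquely through the abelianization $\pi_1(\Mmfd)^{\mathrm{ab}}=H_1(\Mmfd;\Z)$. Thus connected Abelian covers correspond to epimorphisms $H_1(\Mmfd)\to G$.

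Next I would translate the cuspidal condition. A covering $\tilde{\Mmfd}\to\Mmfd$ is cuspidal (Definition~\ref{def:cuspidalCover}) if and only if for every cusp $c$ of $\Mmfd$, the peripheral subgroup $P_c\subset\pi_1(\Mmfd)$ coming from the torus boundary component sits inside the subgroup $\pi_1(\tilde{\Mmfd})$ determined by the cover. In the regular Abelian setting this is equivalent to saying the image of $P_c$ in $G$ is trivial for every cusp $c$. Under the identification above, $P_c$ maps into $H_1(\Mmfd)$ as the image of $H_1(\partial_c\Mmfd)$ under the inclusion $i_*$; taking all cusps together gives $\mathrm{Im}(i_*)\subset H_1(\Mmfd)$. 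Therefore the cuspidal condition is exactly that $\rho: H_1(\Mmfd)\to G$ vanishes on $\mathrm{Im}(i_*)$, equivalently that $\rho$ factors through the quotient $H_1^{\mathrm{cusp}}(\Mmfd)=H_1(\Mmfd)/\mathrm{Im}(i_*)$.

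Combining these observations yields a bijection between connected cuspidal Abelian covers and epimorphisms $H_1^{\mathrm{cusp}}(\Mmfd)\to G$. The sheet count is unchanged by the translation, since it equals the index of the kernel of $\pi_1(\Mmfd)\to G$, which equals $|G|$ by surjectivity.

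I do not expect a serious obstacle here: the only mild subtlety is to make sure the two directions (construct the cover from an epimorphism; construct an epimorphism from a cover) are both handled. For the forward direction, given $\rho:H_1^{\mathrm{cusp}}(\Mmfd)\to G$ we pull back along $\pi_1(\Mmfd)\twoheadrightarrow H_1(\Mmfd)\twoheadrightarrow H_1^{\mathrm{cusp}}(\Mmfd)$ to get the defining subgroup of the cover, and then use the vanishing on peripheral classes together with the fact that the peripheral subgroups are Abelian to check that a cusp neighborhood lifts to $|G|/|\rho(P_c)|=|G|$ disjoint copies, since $\rho(P_c)=0$. For the reverse direction, given a connected cuspidal Abelian cover, its deck group $G$ defines the epimorphism $\pi_1(\Mmfd)\to G$, which is trivial on each $P_c$ by cuspidality and hence descends to the claimed epimorphism from $H_1^{\mathrm{cusp}}(\Mmfd)$.
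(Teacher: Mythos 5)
Your proposal is correct and follows essentially the same route as the paper: classify connected regular covers by epimorphisms from $\pi_1(\Mmfd)$, note that the Abelian condition factors $\rho$ through $H_1(\Mmfd)$ and the cuspidal condition kills $\mathrm{Im}(i_*)$, hence $\rho$ factors through $H_1^{cusp}(\Mmfd)$, with the sheet count $|G|$ coming from surjectivity. Your write-up just spells out the two directions and the lifting of cusp neighborhoods in more detail than the paper does.
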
 

\begin{proof}
Connected regular covering spaces $\tilde{\Mmfd}\to\Mmfd$ correspond to epimorphisms $\rho:\pi_1(\Mmfd)\to G$. Such a cover is cuspidal if the image of every peripheral curve is trivial and is Abelian if $\rho$ factors through the homology $H_1(\Mmfd)$. The peripheral curves go to $\mathrm{Im}(i_*)$ in $H_1^{cusp}(\Mmfd)$. Hence, if a cover is both cuspidal and Abelian, $\rho$ factors through $H_1^{cusp}(\Mmfd)$.
\end{proof}

The following lemma is useful to compute $H_1^{cusp}(\Mmfd)$, especially in SnapPy, which computes only $H_1(\Mmfd)$:
\begin{lemma} \label{lemma:computeCuspidalHomology}
Let $\Mmfd$ be a cusped hyperbolic 3-manifold with $c$ cusps. Then $H_1(\Mmfd)\cong\Z^c\oplus H_1^{cusp}(\Mmfd)$.
\end{lemma}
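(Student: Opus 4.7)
The plan is to compute both sides up to abstract isomorphism using the long exact sequence of the pair $(N, \partial N)$, where $N$ denotes the compact oriented 3-manifold whose interior is $\Mmfd$ and whose boundary consists of $c$ tori. First focus on the portion
$$H_1(\partial N) \xrightarrow{i_*} H_1(N) \xrightarrow{j_*} H_1(N, \partial N) \xrightarrow{\partial} H_0(\partial N) \to H_0(N) \to 0.$$
Since $N$ is connected, the last map is the augmentation $\Z^c \to \Z$ with kernel $\Z^{c-1}$, so by exactness $\mathrm{Im}(\partial) \cong \Z^{c-1}$. By definition $H_1^{cusp}(\Mmfd) = H_1(N)/\mathrm{Im}(i_*) \cong \mathrm{Im}(j_*) = \ker(\partial)$, which yields a short exact sequence
$$0 \to H_1^{cusp}(\Mmfd) \to H_1(N, \partial N) \to \Z^{c-1} \to 0.$$
Since $\Z^{c-1}$ is free, this splits, giving $H_1(N, \partial N) \cong H_1^{cusp}(\Mmfd) \oplus \Z^{c-1}$.

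Next I would compute $H_1(N, \partial N)$ a second way. Poincar\'e-Lefschetz duality gives $H_1(N, \partial N) \cong H^2(N)$, and the universal coefficient theorem yields
$$H^2(N) \cong \mathrm{Hom}(H_2(N), \Z) \oplus \mathrm{Ext}(H_1(N), \Z).$$
Since $N$ is a compact 3-manifold with non-empty boundary, it collapses onto a 2-dimensional spine, so $H_2(N)$ is free abelian of some rank $b_2$, while $\mathrm{Ext}(H_1(N), \Z) \cong \mathrm{Tor}(H_1(\Mmfd))$. Using $\chi(N) = \chi(\partial N)/2 = 0$ together with $H_0(N) = \Z$ and $H_3(N) = 0$, one finds $b_2 = b_1 - 1$, where $b_1 = \mathrm{rank}\, H_1(\Mmfd)$. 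Thus $H_1(N, \partial N) \cong \Z^{b_1 - 1} \oplus \mathrm{Tor}(H_1(\Mmfd))$.

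Finally, the classical ``half lives, half dies'' principle --- that for an oriented compact 3-manifold with boundary the kernel of $i_* \otimes \Q$ is a Lagrangian subspace of $H_1(\partial N; \Q)$ for the intersection form --- gives $\mathrm{rank}\, \mathrm{Im}(i_*) = c$, hence $\mathrm{rank}\, H_1^{cusp}(\Mmfd) = b_1 - c$. Equating the two computations of $H_1(N, \partial N)$ and cancelling the $\Z^{c-1}$ summand via the uniqueness part of the structure theorem for finitely generated abelian groups yields
$$H_1^{cusp}(\Mmfd) \cong \Z^{b_1 - c} \oplus \mathrm{Tor}(H_1(\Mmfd)),$$
from which $\Z^c \oplus H_1^{cusp}(\Mmfd) \cong \Z^{b_1} \oplus \mathrm{Tor}(H_1(\Mmfd)) \cong H_1(\Mmfd)$.

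The main technical step is the half-lives-half-dies invocation, which relies on orientability of $N$ and non-degeneracy of the intersection form on $\partial N$; the remainder is routine manipulation via duality, universal coefficients, and structure theory. The proof produces only an abstract isomorphism, but this suffices here since the lemma is invoked merely to convert SnapPy's output for $H_1(\Mmfd)$ into $H_1^{cusp}(\Mmfd)$.
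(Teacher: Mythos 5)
Your proof is correct, and it is built from the same toolkit as the paper's own argument: the long exact sequence of the pair $(N,\partial N)$, Poincar\'e--Lefschetz duality $H^2(N)\cong H_1(N,\partial N)$, the universal coefficient theorem, and half-lives-half-dies. The bookkeeping is organized differently, though. The paper treats rank and torsion separately: half-lives-half-dies gives $\mathrm{rank}\,H_1(\Mmfd)=c+\mathrm{rank}\,H_1^{cusp}(\Mmfd)$, and the torsion of $H_1(\Mmfd)$ is identified with that of $H_1(\Mmfd,\partial\Mmfd)$ (duality plus universal coefficients) and then with that of $H_1^{cusp}(\Mmfd)\cong\ker\partial$ (exactness plus torsion-freeness of $H_0(\partial\Mmfd)$). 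You instead compute the relative group outright as $H_1(N,\partial N)\cong\Z^{b_1-1}\oplus\mathrm{Tor}(H_1(\Mmfd))$, which requires two additional (standard) inputs the paper avoids --- freeness of $H_2(N)$ via a $2$-spine and $\chi(N)=0$ --- and then split $0\to H_1^{cusp}(\Mmfd)\to H_1(N,\partial N)\to\Z^{c-1}\to 0$ and cancel using the uniqueness part of the structure theorem, which is legitimate for finitely generated abelian groups. One observation: in your arrangement the half-lives-half-dies invocation is actually redundant, since equating ranks in $H_1^{cusp}(\Mmfd)\oplus\Z^{c-1}\cong\Z^{b_1-1}\oplus\mathrm{Tor}(H_1(\Mmfd))$ already forces $\mathrm{rank}\,H_1^{cusp}(\Mmfd)=b_1-c$; so your route derives the lemma from the Euler characteristic and duality alone, whereas the paper's shorter argument genuinely uses half-lives-half-dies to pin down the rank.
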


\begin{proof}
For rational coefficients, the result follows from the Half-Lives-Half-Dies Theorem \cite[Chapter~VI,~Theorem~10.4]{bredon:top_and_geo} stating that $$\dim \ker(i_*:H_1(\partial\Mmfd;\Q)\to H_1(\Mmfd;\Q))=\dim H^1(\partial \Mmfd;\Q)/2=c.$$ We have $\dim H_1(\partial\Mmfd;\Q)=2c$, so $\dim \mathrm{Im}(i_*:H_1(\partial\Mmfd;\Q)\to H_1(\Mmfd;\Q))=c$, so $\dim H_1(\Mmfd)\otimes\Q = c + \dim(H_1^{cusp}(\Mmfd)\otimes\Q)$.

It is left to show that the torsion of $H_1(\Mmfd)$ matches that of $H_1^{cusp}(\Mmfd)$. The universal coefficient theorem for cohomology states that
$$0\to\mathrm{Ext}(H_1(\Mmfd),\Z)\to H^2(\Mmfd)\to\mathrm{Hom}(H_2(\Mmfd),\Z)\to 0,$$
so the torsion of $H_1(\Mmfd)$ given by $\mathrm{Ext}$ is equal to the torsion of $H^2(\Mmfd)\cong H_1(\Mmfd,\partial\Mmfd)$ (by Lefschetz duality) since $\mathrm{Hom}$ is torsion-free. The long exact sequence in homology
$$\cdots\to H_1(\partial M)\xrightarrow{i_*} H_1(M)\xrightarrow{j_*} H_1(M,\partial M)\xrightarrow{\partial} H_0(\partial M)\to\cdots$$
implies that $H_1^{cusp}(M)=H_1(\Mmfd)/\mathrm{Im}(i_*)\cong H_1(\Mmfd)/\ker(j_*)\cong\mathrm{Im}(j_*)\cong\ker(\partial)$. But $H_0(\partial M)$ has no torsion, so the torsion of $\ker(\partial)$ matches that of $H_1(M,\partial M)$ match.
\end{proof}

This argument was given by Ian Agol.

\begin{lemma} \label{lemma:rationalHomologyLinkComplementImpliesInfinite}
The universal regular tessellation $\MunivPrinCong{\pqr}{z}$ is a homology link complement (i.e., $H_1(\MunivPrinCong{\pqr}{z})\cong \Z^c$ where $c$ is the number of cusps). If there exists a finite-volume  tessellation $\Mmfd$ with well-defined cusp modulus $z$ that is not a rational homology link complement (i.e., $H_1(\Mmfd;\Q)\not\cong\Q^c$), then $\MunivPrinCong{\pqr}{z}$ has infinite volume.
\end{lemma}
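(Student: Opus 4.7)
The plan is to dispatch the two statements in turn, assembling the preceding lemmas of Section~\ref{sec:cuspidalCovers}. For the first, I would note that by Definition~\ref{def:univRegTess} the group $\univPrinCong{\pqr}{z}$ is the normal closure of the parabolic matrix $p_z$ in $\tetGroup{\pqr}$, so it is generated by conjugates of $p_z$. Every such conjugate is itself parabolic and hence represents a peripheral element of $\MunivPrinCong{\pqr}{z}$. Abelianizing, every class in $H_1(\MunivPrinCong{\pqr}{z})$ lies in the image of the inclusion $i_*\colon H_1(\partial\MunivPrinCong{\pqr}{z})\to H_1(\MunivPrinCong{\pqr}{z})$, so $H_1^{cusp}(\MunivPrinCong{\pqr}{z})=0$. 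Combined with Lemma~\ref{lemma:computeCuspidalHomology} this yields $H_1(\MunivPrinCong{\pqr}{z})\cong\Z^c$, where $c$ is the number of cusps.

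For the second statement I would argue by contradiction: assume $\MunivPrinCong{\pqr}{z}$ has finite volume. Let $\Mmfd$ be as in the hypothesis. The rational version of Lemma~\ref{lemma:computeCuspidalHomology} (whose proof, using Half-Lives-Half-Dies, works verbatim over $\Q$) together with $H_1(\Mmfd;\Q)\not\cong\Q^c$ forces $H_1^{cusp}(\Mmfd;\Q)\neq 0$. Hence $H_1^{cusp}(\Mmfd)$ contains a free $\Z$-summand and admits surjections onto $\Z/n$ for every $n\geq 1$.

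By the cuspidal-Abelian-cover lemma above, each such surjection produces a connected cuspidal Abelian covering $\tilde{\Mmfd}_n\to\Mmfd$ with exactly $n$ sheets. By the first lemma of this section, $\tilde{\Mmfd}_n$ is itself a tessellation with well-defined cusp modulus $z$, so the next lemma supplies a cuspidal covering $\MunivPrinCong{\pqr}{z}\to\tilde{\Mmfd}_n$. Volume is multiplicative under finite coverings, so $\mathrm{vol}(\MunivPrinCong{\pqr}{z})\geq n\cdot\mathrm{vol}(\Mmfd)$ for every $n$, contradicting finiteness of $\mathrm{vol}(\MunivPrinCong{\pqr}{z})$.

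The main delicate point is just checking that Lemma~\ref{lemma:computeCuspidalHomology} transfers cleanly to $\Q$-coefficients, which is immediate since the Half-Lives-Half-Dies step in its proof is stated over $\Q$ to begin with. Beyond that, the argument is a direct chaining of the four preceding lemmas of this section, and requires no new hyperbolic-geometric input.
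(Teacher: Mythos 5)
Your proposal is correct and follows essentially the same route as the paper: the first claim via vanishing cuspidal homology (peripheral generation of $\univPrinCong{\pqr}{z}$) combined with Lemma~\ref{lemma:computeCuspidalHomology}, and the second via arbitrarily large cuspidal Abelian covers of $\Mmfd$ coming from a free summand of $H_1^{cusp}(\Mmfd)$, all of which are covered by $\MunivPrinCong{\pqr}{z}$. The only differences are cosmetic (the paper phrases the first part as ``no non-trivial cuspidal covers'' and leaves the volume contradiction implicit), and your worry about rational coefficients is harmless since the integral statement of Lemma~\ref{lemma:computeCuspidalHomology} tensors directly with $\Q$.
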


\begin{proof}
The first part follows from the fact that a universal regular tessellation has no non-trivial cuspidal covers and thus vanishing $H_1^{cusp}$. For the second part, notice that Lemma~\ref{lemma:computeCuspidalHomology} implies that $H_1^{cusp}(\Mmfd)$ has a free factor, so the cuspidal Abelian cover induced by $\rho:H_1^{cusp}(\Mmfd)\to H_1^{cusp}(\Mmfd)\otimes \Z/n$ has at least $n$ sheets and can thus be arbitrarily large but is always covered by $\MunivPrinCong{\pqr}{z}$.
\end{proof}

\begin{remark}
The converse of the first statement is wrong: $\MpslPrinCong{-3}{5+\zeta}$ is a homology link, so $H_1(\MpslPrinCong{-3}{5+\zeta})$ is generated by peripheral curves, but it is not universal and not a link complement. See discussion section for the question whether $\pi_1(\Mmfd)$ being generated by peripheral curves is sufficient for being a link complement.
\end{remark}

\section{Proof of Infinite Universal Regular Tessellation for Special Cases} \label{sec:infProofSpecial}

\begin{table}[h]
\caption{Homologies of some arithmetically defined manifolds showing that the corresponding universal regular tessellation is infinite volume.\label{table:homSelArith}}
\begin{center}
\begin{tabular}{c||c|c|c}
Case & Manifold & Cusps & $H_1$\\ \hline \hline
$\MunivPrinCong{\{3,3,6\}}{5}$ & $\MpslPrinCong{-3}{5}$ & 104 & $\Z^{117}$\\ \hline \hline
$\MunivPrinCong{\{3,4,4\}}{3+3i}$ & $\MpslPrinCong{-4}{3+3i}$ & 60 & $\Z^{65}$ \\ \hline
$\MunivPrinCong{\{3,4,4\}}{4}$  &$\MpslPrinCongBig{-4}{4}$ & 48 & $\Z^{52}$ \\ \hline
$\MunivPrinCong{\{3,4,4\}}{4+4i}$ & $\MpglPrinCong{-4}{4+4i}$ & 48 & $\Z^{72}\oplus(\Z/4)$ \\ \hline
$\MunivPrinCong{\{3,4,4\}}{5}$  &$\MpslPrinCongBig{-4}{5}$ & 144 & $\Z^{168}$\\ \hline
$\MunivPrinCong{\{3,4,4\}}{5+2i}$ & $\MpglPrinCong{-4}{5+2i}$ & 210 & $\Z^{238}\oplus(\Z/3)$ \\ \hline
$\MunivPrinCong{\{3,4,4\}}{5+3i}$ & $\MpslPrinCong{-4}{5+3i}$ & 216 & $\Z^{265} \oplus (\Z/2)^{10} \oplus (\Z/3)$  \\ \hline
$\MunivPrinCong{\{3,4,4\}}{6}$ & $\MpslPrinCong{-4}{6}$ & 120 & $\Z^{167}\oplus(\Z/2)^5$
\end{tabular}
\end{center}
\end{table}

\begin{table}[h]
\caption{Proofs that some universal regular tessellation are infinite using Gap.\label{table:gapInfProofs}}
\begin{center}
\begin{tabular}{c||c|c|c|c}
Case & $n$ & $i$ & $Q$ & Criterion \\ \hline \hline
$\MunivPrinCong{\{4,3,6\}}{2+2\zeta}$ & 3 & 1 & $\{e\}$ & inf. Abelianization \\ \hline
$\MunivPrinCong{\{4,3,6\}}{3}$ & 3 & 1 & $\{e\}$ & inf. Abelianization \\ \hline
$\MunivPrinCong{\{4,3,6\}}{3+\zeta}$ & 1 & 1 & $\mathrm{PSL}(3,3)$ & inf. Abelianization \\ \hline
$\MunivPrinCong{\{4,3,6\}}{3+2\zeta}$ & 0 & 2 & $\mathrm{PSL}(2,19)$ & Newman $p=5$ \\ \hline
$\MunivPrinCong{\{4,3,6\}}{4}$ & 0 & 8 & $A_5$ & inf. Abelianization \\ \hline
$\MunivPrinCong{\{4,3,6\}}{4+\zeta}$ & 1 & 3 & $\mathrm{PSL}(2,7)$ & inf. Abelianization \\ \hline
$\MunivPrinCong{\{4,3,6\}}{5}$ & 1 & 3 & $A_5$ & inf. Abelianization \\ \hline \hline
$\MunivPrinCong{\{5,3,6\}}{2+2\zeta}$ & 1 & 1 & $A_7$ & inf. Abelianization \\ \hline
$\MunivPrinCong{\{5,3,6\}}{3}$ & 0 & 6 & $A_5$ & inf. Abelianization \\ \hline
$\MunivPrinCong{\{5,3,6\}}{3+\zeta}$ & 1 & 26 & $\mathrm{PSL}(2,11)$ & inf. Abelianization \\ \hline
$\MunivPrinCong{\{5,3,6\}}{3+2\zeta}$ & 0 & 20 & $\{e\}$ & inf. Abelianization \\ \hline
$\MunivPrinCong{\{5,3,6\}}{4}$ & 1 & 5 & $\mathrm{PSL}(2,13)$ & inf. Abelianization \\ \hline
$\MunivPrinCong{\{5,3,6\}}{4+\zeta}$ & 1 & 10 & $\mathrm{PSL}(2, 7)$ & Newman $p=3$ \\ \hline
$\MunivPrinCong{\{5,3,6\}}{5}$ & 0 & 1 & $\mathrm{PSL}(2,25)$ & inf. Abelianization \\ \hline \hline
$\MunivPrinCong{\{3,4,4\}}{4+3i}$ & 1 & 15 & $\mathrm{PSp}(4,3)$ & inf. Abelianization \\ \hline
$\MunivPrinCong{\{3,4,4\}}{5+i}$ & 2 & 1 & $\mathrm{PSL}(2,25)$ & inf. Abelianization 
\end{tabular}
\end{center}
\end{table}

We have two different techniques to prove that a certain $\MunivPrinCong{\pqr}{z}$ is infinite. In one technique, we can construct $\MpslPrinCongBig{D}{z}$, $\MpslPrinCong{D}{z}$, or $\MpglPrinCong{D}{z}$ as in Section~\ref{sec:constructPrinCong} and use SnapPy to compute its homology (SnapPy might crash on such large manifolds unless \texttt{pari.allocatemem()} is called a couple of times in advance). By Lemma~\ref{lemma:rationalHomologyLinkComplementImpliesInfinite}, it is sufficient to show that the Betti number is larger than the number of cusps. Table~\ref{table:homSelArith} shows the results. 

The other technique uses Gap \cite{gap:gap} to prove that the group $G$ in Section~\ref{sec:allRegTessThorughGap} with presentation~\eqref{eqn:groupRep} is infinite.
Table~\ref{table:gapInfProofs} shows these proofs. Gap cannot directly show that $G$ is infinite, so we look for a subgroup $K\subset G$ such that Gap can show that $K$ is infinite either by showing that the Abelianization $K/[K;K]$ is infinite or applying the Newman criterion given in \cite{newman:InfinityCriterion} for some prime $p$ (see last column in table). This technique was suggested by Derek Holt \cite{holt:mathoverflow}. To find $K$, consider the $n$-th derived subgroup defined by $G^{(0)}=G$ and $G^{(n+1)}=[G^{(n)};G^{(n)}]$. For each subgroup $H\subset G^{(n)}$ of index $i$ and for each epimorphism of $H$ into a suitable simple group $Q$, define $K=\mathrm{Ker}(H\to Q)$. Here is an example of how the table entry for $\MunivPrinCong{\{4,3,6\}}{4+\zeta}$ would translate into Gap code proving infinity:

\begin{verbatim}
gap> G := SymmetriesUniversalRegularTessellation(4,3,6,4,1);;
gap> G1 := DerivedSubgroup(G);;
gap> for H in LowIndexSubgroupsFpGroup(G1,3) do
>        for q in GQuotients(H, PSL(2,7)) do
>            K := Kernel(q); A := AbelianInvariants(K);
>            if 0 in A then Print("infinite "); fi;
>        od;
>    od;
infinite infinite
\end{verbatim}
We have prepared a script \texttt{infiniteUniversalRegularTessellationProofs.g} that automatically verifies that the cases in Table~\ref{table:gapInfProofs} are indeed infinite.

\section{Proof of Main Theorem} \label{section:ProofMainThm}

The first statement of the main theorem, Theorem~\ref{thm:main}, is just Lemma~\ref{lemma:linkImpliesUniv}. Section~\ref{sec:pythonForUniv} described how the finite universal regular tessellations were constructed and Section~\ref{sec:howToFindDehnFillings} how Dehn fillings were found for each case not marked with a star in Table~\ref{table:finiteVolUnivReg} to verify that these universal regular tessellations are link complements. It is thus left to show that every universal regular tessellation $\MunivPrinCong{\pqr}{z}$ not listed in this table is infinite. Theorem~\ref{thm:bigMod} does this for large cusp modulus. For the remaining cases, this was done in Section~\ref{sec:infProofSpecial}. Figure~\ref{fig:OverviewArithCases} gives an overview for the cases $\{3,3,6\}$ and $\{3,4,4\}$ indicating where the generic proof in Theorem~\ref{thm:bigMod} applies and where the special cases are needed.

\section{Discussion} \label{section:dicussion}

The fundamental group being generated by peripheral curves is a necessary condition for a manifold to be a link complement. This fact is the only technique we used here to prove that a manifold is not a link complement. In particular, we did not invoke Gromov and Thurston's $2\pi$-Theorem (see \cite{BleilerHodgson:DehnFilling}), the later improvement to the 6-theorem in \cite{Lackenby:WordHypDehnSurgery} and \cite{Agol:BoundsOnExceptionalDehnFilling}, or the bound on the systole of link complements given in \cite{adamsReid:SystoleBoundLinkComplement}.

The 6-theorem states that if $M$ is a hyperbolic manifold and $s_k$ are slopes of length greater than 6 (measured on the boundary of disjoint cusp neighborhoods), then Dehn filling $M$ along these slopes results again in a hyperbolic manifold that, in particular, cannot be $S^3$.

This theorem implies many classification results about exceptional Dehn fillings, among them the software by Kazuhiro Ichihara and Hidetoshi Masai \cite{fefgen} for listing all potential exceptional slopes. This is powerful enough to decide whether a hyperbolic manifold is a knot complement, because it leaves only finitely many Dehn fillings to test for being $S^3$. But when applied to link complements, it generally leaves infinitely many Dehn fillings to test. Moreover, the techniques in Ichihara and MasaiÕs work do not scale to the size of the manifolds encountered in this paper.

Thus, we still had to use a method based on greedily minimizing volume in Section~\ref{subsubsec:findingSlopes} to find Dehn fillings resulting in $S^3$; this method is rather ad hoc, but it proved to be successful.

\subsection{Peripherally Generated Fundamental Groups and Link Complements}

Is being a peripherally generated fundamental group also a sufficient condition for being a link complement? The same question was asked for hyperbolic manifolds in \cite{bakerReid:prinCong}:
\begin{quote}
Let $M=\H^3/\Gamma$ be a finite volume orientable hyperbolic 3-manifold for which $\Gamma$ is generated by parabolic elements. Is $M$ homeomorphic to a link complement in $S^3$?
\end{quote}

It turns out that the answer is negative, a one-cusped counterexample being the manifold \texttt{m011}: we verified in SnapPy and Gap that its fundamental group is generated by peripheral curves. But according to \texttt{fef\_gen.py} (part of Ichihara and MasaiÕs work cited above), there are seven potential exceptional slopes: $(-1, 1), (-1, 2), (0, 1), (1, 0), (1, 1), (1, 2), (2, 1)$, each, $H_1\not\cong 0.$

\subsection{The unsettled case $\MunivPrinCong{\{3,4,4\}}{4+2i}$}

 Although this finite-volume manifold $\MunivPrinCong{\{3,4,4\}}{4+2i}$ fulfills the universal cuspidal cover property, this is not sufficient for being a link complement as we have seen earlier. The technique used to find the Dehn-fillings for the other cases fails in this case, suggesting that $\MunivPrinCong{\{3,4,4\}}{4+2i}$ might actually not be a link complement. However, the other theorems mentioned above (e.g., Gromov and Thurston's $2\pi$-Theorem) are not strong enough to disprove link complement in this case. So, this case unfortunately remains unsettled.

\subsection{The unsettled case $\MunivPrinCong{\{5,3,6\}}{2+\zeta}$}

 It is not even known in this case whether the manifold $\MunivPrinCong{\{5,3,6\}}{2+\zeta}$ is finite volume. Experimentally though, the situation seems clearer: The growth of the algorithm to construct the universal regular tessellation $\MunivPrinCong{\{5,3,6\}}{2+\zeta}$ is 84, 588, 3528, 17640, 79380, 353976, 1545852, 6630288, 28208124, ... simplices. Extra code furthermore confirmed that the $4$-clusters consisted of at most three hexagons for each of those surfaces $\MunivPrinCong{\{5,3,6\}}{2+\zeta}(n)$ that the computer could still construct in reasonable time. Together, this strongly suggests that $\MunivPrinCong{\{5,3,6\}}{2+\zeta}$ is infinite volume but so far we have no proof.

\myComment{
\bigskip

In this paper, we accomplished proving manifolds to be link complements by finding suitable Dehn-fillings for, in some cases, rather large manifolds, e.g., $\MunivPrinCong{\{5,3,6\}}{2}$ with 600 cusps.
We have also developed new techniques to prove that a manifold $\Mmfd$ is not a link complement and even prove that some of the categories $\mathcal{C}^{\pqr}_z$ of regular tessellations with given cusp modulus $z$ (see Definition~\ref{def:catRegTessCuspMod}) contain no link complement at all.

To summarize these new techniques: We say that an orientable cusped 3-manifold $\Mmfd$ has the ``universal cuspidal cover property'' if $\pi_1(\Mmfd)$ is generated by peripheral curves. The universal cuspidal cover property is a necessary condition for $\Mmfd$ to be a link complement. We call it the universal cuspidal cover property because it is equivalent to $\Mmfd$ being its own universal cuspidal cover as given in Definition~\ref{def:cuspidalCover}. We gave an algorithm to construct the universal cuspidal cover for regular tessellations and showed that it is infinite volume in some cases. This was our main technique to rule out the existence of further regular tessellation link complements.

In particular, we did not invoke Gromov and Thurston's $2\pi$-Theorem (see \cite{BleilerHodgson:DehnFilling}), Lackenby's \cite{Lackenby:WordHypDehnSurgery} and Agol's \cite{Agol:BoundsOnExceptionalDehnFilling} later improvement to $6$, or Adams and Reid's bound on the systole of link complements in \cite{adamsReid:SystoleBoundLinkComplement} which can also be used to prove that a manifold is not a link complement.

\subsection{Universal Cuspidal Cover Property and Link Complements}

The universal cuspidal cover property is necessary for link complements, but is it sufficient? Baker and Reid asked the same question for hyperbolic manifolds in \cite{bakerReid:prinCong}: ``Let $M=\H^3/\Gamma$ be a finite volume orientable hyperbolic 3-manifold for which $\Gamma$ is generated by parabolic elements. Is $M$ homeomorphic to a link complement in $S^3$?''

It turns out that the answer is negative, a one-cusped counterexample being the manifold \texttt{m011}: we verified that its fundamental group is generated by peripheral curves in SnapPy and Gap. We also went through all Dehn-fillings with parameters of absolute value less than 30 and showed that the filled manifold has either non-trivial homology (in SnapPy), a geometric solution (in SnapPy) or that the fundamental group is non-trivial (in Gap). It follows from Gromov and Thurston's $2\pi$-Theorem (see \cite{BleilerHodgson:DehnFilling}) that we covered all exceptional slopes that could yield $S^3$ as filled manifold and proved \texttt{m011} not to be a knot complement.

We also performed the same verification steps for \texttt{m292} to have a two-cusped counterexample and though this is not a proof, it strongly indicates that $\texttt{m292}$ is not a link complement (the reason why the theorem is sufficient as proof for the one-cusp case but not for the two-cusp case is because it rules out only those Dehn-fillings where the parameters are large enough for all cusps simultaneously).

\subsection{Generalization to Principal Congruence Link Complements of Other Discriminants}

It is known that there are only finitely many principal congruence link complements $\MpslPrinCongBig{D}{z}$, and Baker and Reid have a partial classification, see \cite{bakerReid:prinCong}. For discriminant $D=-3$ and $-4$, we accomplished a complete classification here. It is imaginable that we can extend the classification to other discriminants, especially those with class number 1. Recall the definition of a nanotube in Section~\ref{sec:ConstructUnivRegTess}. It is obtained by unfolding a Bianchi orbifold of discriminant $D=-3$ or $-4$ along those edges connected to the cusp. We can start with a different Bianchi orbifold and probably apply the same methods used in this paper to obtain results about principal congruence link complements for other discriminants. The Bianchi orbifolds of small discriminant have been described already: Riley \cite{Riley:PoincareTheoremFundamental} computed their fundamental domains and Hatcher gave handy structural descriptions \cite{Hatcher:HypStructArithLinkComp} and orbifold diagrams \cite{hatcher:bianchi}.

\subsection{Universal Cuspidal Cover Property for Any Manifold}

A further generalization is imaginable where we develop the algorithm in Definition~\ref{def:UnivRegTessAlgo} to construct the universal cuspidal cover for any orientable cusped manifold $\Mmfd$. It would start with the cells of the dual of an ideal triangulation of $\Mmfd$ as building blocks with some extra combinatorial data on how these cells are to be glued. If the resulting cell complex is larger than $\Mmfd$ after enough iterations, we can conclude that $\Mmfd$ fails the universal cuspidal cover property and cannot be a link complement. This might yield an efficient algorithm to compute this obstruction of being a link complement. Of course, a lot of details need to be worked out, for example, how many iterations need to be run before a decision can be made.
}

\myComment{
\newpage

We want to address the following questions:
\begin{enumerate}
\item Do the techniques used here generalize to principal congruence link complements for discriminant other than $D=-3$ and $-4$?
\item Does the technique of showing that $\Mmfd$ differs from its universal cuspidal cover generalize to any cusped manifold $\Mmfd$?
\item The condition that $\Mmfd$ is equal to its universal cuspidal cover is necessary for a link complement, but is it sufficient?
\item Can the other techniques of disproving link complement mentioned above help settle the remaining case $\MunivPrinCong{\{3,4,4\}}{4+2i}$?
\item What about the case $\MunivPrinCong{\{5,3,6\}}{2+\zeta}$?
\end{enumerate}

Question~1: It is known that there are only finitely many such link complements, and Baker and Reid started a partial classification, see \cite{bakerReid:prinCong}. For discriminant $D=-3$ and $-4$, we accomplished a complete classification here. It is imaginable that we can extend the classification to other discriminants, especially those with class number 1. Recall the definition of a nanotube in Section~\ref{sec:ConstructUnivRegTess}. It is obtained by unfolding a Bianchi orbifold of discriminant $D=-3$ or $-4$ along edges connecting to the cusp. We can start with a different Bianchi orbifold and probably apply the same methods used in this paper to obtain results about principal congruence link complements for other discriminants. The necessary other Bianchi orbifolds have been described already: Riley \cite{Riley:PoincareTheoremFundamental} computed their fundamental domains and Hatcher gave handy structural descriptions \cite{Hatcher:HypStructArithLinkComp} and orbifold diagrams \cite{hatcher:bianchi}.

Question~2: Given an ideal triangulation of a manifold $\Mmfd$, take the dual cell decomposition. We can use the cells in the cell decomposition as building blocks to run a generalized version of the algorithm in Definition~\ref{def:UnivRegTessAlgo} to construct the universal cuspidal cover of $\Mmfd$. If the result is larger than $\Mmfd$, we know that $\Mmfd$ differs from its universal cuspidal cover and is not a link complement. Such a result requires however some extra work, for example, if the algorithm does not terminate, how many iterations are enough to make a decision.

Question~3: 

These techniques can probably be generalized. One application would be the classification of principal congruence link complements. It is known that there are only finitely many such link complements, and Baker and Reid started a partial classification, see \cite{bakerReid:prinCong}. For discriminant $D=-3$ and $-4$, we accomplished a complete classification here. It is imaginable that we can extend the classification to other discriminants, especially those with class number 1. Recall the definition of a nanotube in Section~\ref{sec:ConstructUnivRegTess}. It is obtained by unfolding a Bianchi orbifold of discriminant $D=-3$ or $-4$ along edges connecting to the cusp. We can start with a different Bianchi orbifold and apply the same methods used in this paper to obtain results about principal congruence link complements for other discriminants. The necessary other Bianchi orbifolds have been described already: Riley \cite{Riley:PoincareTheoremFundamental} computed their fundamental domains and Hatcher gave handy structural descriptions \cite{Hatcher:HypStructArithLinkComp} and orbifold diagrams \cite{hatcher:bianchi}.

We now discuss techniques that can be used to prove that a manifold $\Mmfd$ is not a link complement. The main technique used in this paper is to show that $\pi_1(\Mmfd)$ is not generated by peripheral curves, in other words, that $\Mmfd$ differs from its universal cuspidal cover (see Definition~\ref{def:cuspidalCover}). In particular, we did not invoke Gromov and Thurston's $2\pi$-Theorem (see \cite{BleilerHodgson:DehnFilling}), Lackenby's \cite{Lackenby:WordHypDehnSurgery} and Agol's \cite{Agol:BoundsOnExceptionalDehnFilling} later improvement to $6$, or Adams and Reid's bound on the systole of link complements in \cite{adamsReid:SystoleBoundLinkComplement}.

There are three obvious questions: 
\begin{enumerate}
\item Does the technique of showing that $\Mmfd$ differs from its universal cuspidal cover generalize?
\item The condition that $\Mmfd$ is equal to its universal cuspidal cover is necessary for a link complement, but is it sufficient?
\item And can the other techniques of disproving link complement mentioned above help settle the case $\MunivPrinCong{\{3,4,4\}}{4+2i}$?
\end{enumerate}

An even further generalization is an efficient algorithm

 Maybe, it can be further generalized to yield an efficient algorithm that can decide for any cusped manifold $\Mmfd$ whether $\pi_1(\Mmfd)$ is generated by peripheral curves. This would be useful obstruction to $\Mmfd$ being a link complement.

 and second to a generic algorithm that can decide 

These techniques can probably be generalized. Recall Definition~

It is easy to imagine that these techniques can be generalized and used to classify the principal congruence link complements $\MpslPrinCongBig{D}{z}$. It is known that there are only finitely many such link complements, and Baker and Reid started a partial classification, see \cite{bakerReid:prinCong}.

In this paper, we have developed a new technique to show that a certain group $\Gamma\subset\PSL(2,\C)$ generated by parabolic elements is not cofinite-volume and also accomplished finding Dehn-fillings for large manifolds (e.g., 600 cusps for $\MunivPrinCong{\{5,3,6\}}{2}$) to prove them to be link complements. Both techniques are probably useful in other contexts as well. For example, it is imaginable to generalize these techniques and settle many of the remaining cases in the classification of all principal congruence link complements $\MpslPrinCongBig{D}{z}$ (as introduced in Section~\ref{sec:PrinCongDef}):
We refer the reader to \cite{bakerReid:prinCong} for a discussion why there are only finitely many principal congruence links and for a list of the known ones for the discriminants $D\not=-3, -4$ not covered here. A principal congruence manifold is a cover of the Binachi orbifold $\MpslPrinCongBig{D}{1}$. Riley \cite{Riley:PoincareTheoremFundamental} computed fundamental domains of low-discriminant Bianchi orbifolds (also see \cite{hatcher:bianchi} for orbifold diagrams). Assume that the Bianchi orbifold has only one cusp. Construct a polyhedron with the same topology as such a fundamental domain and use it in Section~\ref{sec:pythonForUniv} instead of simplices to construct the equivalent of a nanotube for that discriminant $D$ and generator of the ideal $z$. Performing a similar algorithm for attaching nanotubes, one could built the principal congruence manifolds and also the equivalent of the universal regular tessellation, i.e., the quotient by the normal subgroup of $\pslPrinCongBig{D}{1}$ generated by only one parabolic element. If these two differ, we know that the principal congruence manifold is not a link complement. If they are the same, we can find Dehn-fillings to prove it a link complement.

We also want to point out that this paper establishes that regular tessellations of large cusp modulus fail to be link complements without invoking Gromov and Thurston's $2\pi$-Theorem (see \cite{BleilerHodgson:DehnFilling}) using a direct proof by showing that the universal regular tessellation is infinite-volume.

Unfortunately, there are two potential cases $\MunivPrinCong{\{3,4,4\}}{4+2i}$ and $\MunivPrinCong{\{5,3,6\}}{2+\zeta}$ of regular tessellation link complements left that we were not able to settle yet. This leaves the following interesting question open: ``Is a finite-volume manifold regular tessellation $\Mmfd$ a link complement if and only if it is universal?'' Recall that universal meant that the fundamental group of $\Mmfd$ is generated by peripheral curves. Baker and Reid ask this question more generally in \cite{bakerReid:prinCong}: ``Let $M=\H^3/\Gamma$ be a finite volume orientable hyperbolic 3-manifold for which $\Gamma$ is generated by parabolic elements. Is $M$ homeomorphic to a link complement in $S^3$?''

For this more general question however, there are counterexamples: we verified for the one-cusped manifold \texttt{m011} that the fundamental group is generated by peripheral curves in SnapPy and Gap. We also went through all Dehn-fillings with parameters of absolute value less than 30 and showed that the filled manifold has either non-trivial homology, a geometric solution (in SnapPy) or that the fundamental group is non-trivial (in Gap). It follows from Gromov and Thurston's $2\pi$-Theorem (see \cite{BleilerHodgson:DehnFilling}) that we covered all exceptional slopes that could yield $S^3$ as filled manifold and proved \texttt{m011} not to be a knot complement. We also performed the same verification steps for the two-cusped manifold \texttt{m292} and though this is not a proof, it strongly indicates that $\texttt{m292}$ is not a link complement (the reason why the theorem is sufficient as proof for the one-cusp case but not for the two-cusp case is because it rules out only those Dehn-fillings where the parameters are large enough for all cusps simultaneously).

Since the answer to this question is negative, we are left with the techniques we used to find the Dehn-fillings for the other cases. These techniques fail for $\MunivPrinCong{\{3,4,4\}}{4+2i}$ so $\MunivPrinCong{\{3,4,4\}}{4+2i}$ might actually not be a link complement. Unfortunately, other well-known theorems that can be used to prove something is not a link complement also fail because the cusp modulus is still too small. These theorems include Gromov and Thurston's $2\pi$-Theorem (see \cite{BleilerHodgson:DehnFilling}), Lackenby's \cite{Lackenby:WordHypDehnSurgery} and Agol's \cite{Agol:BoundsOnExceptionalDehnFilling} later improvement to $6$ and Adams and Reid's bound on the systole of link complements in \cite{adamsReid:SystoleBoundLinkComplement}.

For the other unsettled case, the situation seems more clear: The growth of the algorithm to construct the universal regular tessellation $\MunivPrinCong{\{5,3,6\}}{2+\zeta}$ is 84, 588, 3528, 17640, 79380, 353976, 1545852, 6630288, 28208124, ... simplices. Extra code furthermore confirmed that the $4$-clusters consisted of at most three hexagons for each of those surfaces $\MunivPrinCong{\{5,3,6\}}{2+\zeta}(n)$ that the computer could still construct in reasonable time. Together, this strongly suggests that $\MunivPrinCong{\{5,3,6\}}{2+\zeta}$ is infinite volume but so far we have no proof.
}



\section*{Acknowledgements}

I am very grateful to Ian Agol for advising me and for his many deep insights. The results in this paper would not have been possible without him. I wish to thank Christian Zickert for many fruitful discussions and William P. Thurston for a very stimulating conversation. I also want to acknowledge Derek Holt for showing me how to prove groups infinite in Gap. I also wish to thank the referee for his well-written and insightful report and for pointing out to me some recent work about exceptional Dehn fillings.

\myComment{
\section{TODO}

Number field for minimally twisted 7-component chain link is:
$x^3 - 11/5*x^2 + 8/5*x - 1/5$

Formulate main theorem correctly if we fail to prove the last finite volume manifold universal regular tessellation to be finite volume.

Add links to hyp3d videos

$g_P=\left(\begin{array}{cc} 0 & \frac{1}{2}\\ -1 & 1\end{array}\right)$ for $\{4,3,6\}$.

$g_P=\left(\begin{array}{cc} 0 & \frac{3-\sqrt{5}}{2} \\ -1 & 1 \end{array}\right)$ for $\{5,3,6\}$.

\clearpage
}

\bibliographystyle{amsalpha}
\bibliography{math_bibliography}

\end{document}